\patchcmd{\subsection}{-.5em}{.5em}{}{}
\newcommand*{\vimage}[1]{\vcenter{\hbox{\includegraphics[height=4cm]{#1}}}}
\newtheorem{thm}{Theorem}[section]
\newtheorem{prop}[thm]{Proposition}
\newtheorem{lemma}[thm]{Lemma}
\newtheorem{cor}[thm]{Corollary}
\newtheorem*{conjecture*}{Conjecture}
\newtheorem{question}[thm]{Question}
\newtheorem{problem}[thm]{Problem}
\newtheorem*{question*}{Question}
\newtheorem{definition}[thm]{Definition}
\theoremstyle{definition}
\newtheorem{remark}[thm]{Remark}
\newtheorem{ex}[thm]{Example}
\numberwithin{equation}{section}
\numberwithin{figure}{section}
\numberwithin{table}{section}
\numberwithin{subsection}{section}
\newcommand{\N}{\mathbb{N}}
\newcommand{\A}{\mathbb{A}}
\renewcommand{\P}{\mathbb{P}}
\newcommand{\R}{\mathbb{R}}
\newcommand{\C}{\mathbb{C}}
\newcommand{\Q}{\mathbb{Q}}
\newcommand{\Z}{\mathbb{Z}}
\newcommand{\F}{\mathbb{F}}
\newcommand\restr[2]{{
    \left.\kern-\nulldelimiterspace 
    #1
    \right|_{#2} 
}}
\newcommand\supind[1]{{\smash[t]{(#1)}}}
\let\originalleft\left
\let\originalright\right
\renewcommand{\left}{\mathopen{}\mathclose\bgroup\originalleft}
\renewcommand{\right}{\aftergroup\egroup\originalright}
\newenvironment{nouppercase}{
  
  \renewcommand{\uppercasenonmath}[1]{}}{}
\newcommand{\defstyle}{\textit}
\DeclareMathOperator{\Sym}{Sym}
\DeclareMathOperator{\Hom}{Hom}
\DeclareMathOperator{\Aut}{Aut}
\DeclareMathOperator{\id}{id}
\DeclareMathOperator{\diag}{diag}
\DeclareMathOperator{\Fix}{Fix}
\DeclareMathOperator{\im}{im}
\DeclareMathOperator{\rank}{rk}
\DeclareMathOperator{\Pow}{Pow}
\DeclareMathOperator{\Gr}{Gr}
\DeclareMathOperator{\Stab}{Stab}
\DeclareMathOperator{\lcm}{lcm}
\DeclareMathOperator{\Newt}{Newt}
\newcommand{\Orth}[1]{\ensuremath{\mathbb{O}(#1)}}
\newcommand{\SOrth}[1]{\ensuremath{\mathbb{SO}(#1)}}
\newcommand{\SC}[1][n+1]{\ensuremath{\Z_r^{#1}}}
\newcommand{\G}[1][r]{\ensuremath{\mathcal{G}_{#1}}}
\newcommand{\Cx}{\ensuremath{\C[\mathbf{x}]}}
\newcommand{\Cy}{\ensuremath{\C[\mathbf{y}]}}
\newcommand{\Cyt}{\ensuremath{\C[\mathbf{y},t]}}
\DeclareMathOperator{\sym}{\mathfrak{s}}
\newcommand{\powsum}[1]{\ensuremath{\mathfrak{f}}_{#1}}
\DeclareMathOperator{\reci}{\mathcal{R}}
\DeclareMathOperator{\dual}{\mathcal{D}}
\title{\Large Coordinate-wise Powers of Algebraic Varieties}
\author{\large Papri Dey}
\address{\small Indian Statistical Institute (ISI) Kolkata}
\email{\small yedirpap@gmail.com}
\author{\large Paul Görlach}
\address{\small Max-Planck Institute \emph{Mathematics in the Sciences} Leipzig}
\email{\small goerlach@mis.mpg.de}
\author{\large Nidhi Kaihnsa}
\address{\small Department of Mathematical Sciences, University of Copenhagen}
\email{\small nidhi@math.ku.dk}
\begin{document}
\begin{nouppercase}
\maketitle
\end{nouppercase}

\begin{abstract}
  We introduce and study coordinate-wise powers of subvarieties of $\P^n$, 
  i.e.\ varieties arising from raising all points in a given subvariety 
  of $\P^n$ to the $r$-th power, coordinate by coordinate.
  This corresponds to studying the image of a subvariety of $\P^n$ under the 
  quotient of $\P^n$ by the action of the finite group $\Z_r^{n+1}$.
  We determine the degree of coordinate-wise powers and study their defining 
  equations, in particular for hypersurfaces and linear spaces.
  Applying these results, we compute the degree of the variety of 
  orthostochastic matrices and determine iterated dual and reciprocal varieties 
  of power sum hypersurfaces. We also establish a link between coordinate-wise 
  squares of linear spaces and the study of real symmetric matrices with a 
  degenerate eigenspectrum.
\end{abstract}


\section{Introduction}

Recently, Hadamard products of algebraic varieties have been attracting 
attention of geometers. These are subvarieties $X \star Y$ of 
projective space $\P^n$ 
that arise from multiplying coordinate-by-coordinate any two points $x \in X$, 
$y \in Y$ in given subvarieties $X,Y$ of $\P^n$.
In applications, they first appeared in \cite{CMS}, where the variety 
associated to the restricted Boltzmann machine was described as a repeated 
Hadamard product of the secant variety of $\P^1 \times \ldots \times \P^1 
\subset \P^{2^n-1}$ with itself.
Further study in \cite{BCK}, \cite{FOW}, \cite{BCFL}, \cite{CCFL} made 
progress towards understanding 
Hadamard products.

Of particular interest are the $r$-th Hadamard powers $X^{\star r} := X \star \ldots \star X$ of an algebraic variety $X \subset \P^n$. They are the multiplicative analogue of secant varieties that play a central role in classical projective geometry: The $r$-th secant variety $\sigma_r(X)$ is the closure of the set of coordinate-wise sums of $r$ points in $X$. Its subvariety corresponding to sums of $r$ equal points is the original variety $X$. In the multiplicative setting, the Hadamard power $X^{\star r}$ replaces $\sigma_r(X)$, but it does not typically contain $X$ if $[1:\ldots:1] \notin X$. As a multiplicative substitute for the inclusion $X \subset \sigma_r(X)$, it is natural to study the subvariety of $X^{\star r}$ given by coordinate-wise products of $r$ equal points in $X$.

Formally, for a projective variety $X \subset \P^n$ and an integer $r\in\Z$ (possibly negative), we are interested in studying its image under the rational map
  \[\varphi_r \colon \P^n \dashrightarrow \P^n, \qquad [x_0:\ldots:x_n] \mapsto 
  [x_0^r:\ldots:x_n^r].\]
We call the image, $X^{\circ r}$, of $X$ under $\varphi_r$ the $r$-th \defstyle{coordinate-wise power of $X \subset \P^n$}.

\medskip

In this article, we investigate these coordinate-wise powers $X^{\circ r}$ with 
a main focus on the case $r>0$. These varieties show up naturally in many 
applications. For the Grassmannian variety $\Gr(k,\P^n)$ in its Plücker embedding, 
the intersection with its $r$-th coordinate-wise power $\Gr(k,\P^n) \cap 
\Gr(k,\P^n)^{\circ r}$ was described combinatorially in terms of matroids 
in \cite{Len} for even $r$. In \cite{Bon}, highly singular surfaces in $\P^3$ 
have been constructed as preimages of a specific singular surface under the 
morphism $\varphi_r$ for $r>0$. In the case $r=-1$, the map $\varphi_r$ is a classical Cremona transformation and images of varieties under this transformation are called \defstyle{reciprocal varieties} whose study has received particular attention in the 
case of linear spaces, see \cite{LSV}, \cite{KV} and \cite{FSW}. 

For $r > 0$, the coordinate-wise powers $X^{\circ r}$ of a variety $X \subset 
\P^n$ have the following natural interpretation: The quotient of 
$\P^n$ by the finite subgroup $\Z_r^{n+1}$ of the torus $(\C^*)^{n+1}$ is again 
a projective space. The image of a variety $X \subset \P^n$ 
in $\P^n/\Z_r^{n+1} \cong \P^n$ is the variety $X^{\circ r}$, since $\varphi_r 
\colon \P^n \to \P^n$ is the geometric quotient of $\P^n$ by 
$\Z_r^{n+1}$. In other words, coordinate-wise powers of algebraic varieties are 
images of subvarieties of $\P^n$ under the quotient by a certain finite group. 
The case $r=2$ has the special geometric significance of quotienting by the 
group 
generated by reflections at the coordinate hyperplanes of $\P^n$. We are, 
therefore, especially interested in 
\defstyle{coordinate-wise squares} of varieties.

A particular application of interest is the variety of orthostochastic matrices.
An orthostochastic matrix is a matrix arising by squaring each entry of an 
orthogonal matrix. In other words, they are points in the coordinate-wise 
square of the variety of orthogonal matrices. Orthostochastic matrices play a 
central role in the theory of majorization \cite{MOA} and are closely linked to 
finding real symmetric matrices with prescribed eigenvalues and 
diagonal entries, see \cite{Hor} and \cite{Mir}. Recently, it has also been 
shown that studying the variety of orthostochastic matrices is central to the 
existence of determinantal representations of bivariate polynomials and their 
computation, see \cite{Dey}.

As a further application, we show that coordinate-wise squares of linear spaces show up naturally in the study of symmetric matrices with a degenerate spectrum of eigenvalues.


\medskip

The article is structured as follows: 
As customary when studying any variety, first and foremost, we 
compute the degree of $X^{\circ r}$. We 
use this to derive
the degree of 
the variety of orthostochastic matrices. In 
Section~\ref{sec:Hypersurfaces}, we dig a little deeper and 
find 
explicitly the defining equations
of 
the
coordinate-wise powers
of hypersurfaces.
We define {\em generalised power sum hypersurfaces} and give 
relations between their dual and reciprocal varieties.

We study in more detail 
coordinate-wise powers
of linear spaces in the 
final
section.
We show the dependence of the degree of the coordinate-wise powers of a linear 
space 
on the combinatorial information captured by the corresponding linear matroid.
Particular attention is drawn to the case of coordinate-wise squares of
linear spaces. 
For 
low-dimensional linear spaces
we give a complete classification.
We also describe the defining ideal for the coordinate-wise square of general 
linear spaces of arbitrary dimension in a high-dimensional ambient space, and 
we link this question to the 
study of symmetric matrices with a codimension~1 eigenspace.

\medskip

\paragraph{\textbf{Acknowledgements}}
The authors would like to thank Mateusz Michałek and Bernd Sturmfels for their 
guidance and suggestions. 
This work was initiated while the first author was visiting Max Planck 
Institute MiS Leipzig. The financial support by MPI Leipzig which made this 
visit 
possible is gratefully acknowledged. The second and third author were funded
by the International Max Planck Research School {\em Mathematics in the 
Sciences} (IMPRS) during this project.
	
\section{Degree formula} \label{sec:Degree}


Throughout this article, we work over $\C$. 
We denote the homogeneous coordinate ring of $\P^n$ by $\Cx := 
\C[x_0,\ldots,x_n]$.
 For any integer $r \in \Z$, 
we 
consider the rational map
  $$ \varphi_r \colon \P^n \dashrightarrow \P^n, \qquad [x_0:\ldots:x_n] 
  \mapsto [x_0^r:\ldots:x_n^r].$$

For $r \geq 0$, the rational map $\varphi_r$ is a morphism.
Throughout, let $X \subset \P^n$ be a projective variety, not necessarily 
irreducible. We denote by $X^{\circ r} \subset \P^n$ the image of $X$ under the 
rational map $\varphi_r$. More explicitly,
  $$X^{\circ r} := \begin{cases}
  \overline{\varphi_r(X \setminus V(x_0 x_1 \ldots x_n))} &\text{if } r < 
  0, \\
  \varphi_r(X) &\text{if } r\geq 0. \\
  \end{cases}$$
For $r < 0$, we will only consider the case that no irreducible component of 
$X$ is contained in any coordinate hyperplane of $\P^n$. We call the image 
$X^{\circ r} \subset \P^n$ the {\em $r$-th coordinate-wise power} 
of $X$. In the case $r=-1$, the variety $X^{\circ (-1)}$ is called the 
{\em reciprocal variety} of $X$. We primarily focus on positive coordinate-wise 
powers in this article, and 
therefore we will from now on always assume $r > 0$ unless explicitly stated 
otherwise. 

Observe 
that $\varphi_r \colon \P^n \to \P^n$ is a finite morphism, and hence, the 
image $X^{\circ r}$ of $X$ under $\varphi_r$ has same dimension as 
$X$. 

The cyclic group $\Z_r$ of order $r$ is identified with the group of
$r$-th roots of unity $\{\xi \in \C \mid \xi^r = 1\}$. We consider the action 
of the $(n+1)$-fold product $\Z_r^{n+1} := \Z_r\times \ldots \times \Z_r$
on $\Cx$ given by rescaling the variables $x_0, \ldots, x_n$ with $r$-th 
roots of unity. We denote the quotient of $\SC$ by the subgroup $\{(\xi,\xi,\ldots,\xi) \in 
\C^r \mid \xi^r = 1\} \subset \SC$ as $\G := \SC/\Z_r$. The group action of 
$\SC$ on $\Cx$ determines a linear action of $\G$ on $\P^n$. In this way, we 
can also view $\G$ as a subgroup of $\Aut(\P^n)$.
For $r=2$, this has the geometric interpretation of being the linear group 
action generated by reflections at coordinate hyperplanes. 
Note that $\G$ does not act on the vector space $\Cx_d$ of homogeneous 
polynomials of degree $d$, instead it acts on $\P(\Cx_d)$.

%

%

Given a projective variety, the following proposition describes set-theoretically the preimage 
under $\varphi_r$ of its coordinate-wise $r$-th power.

\begin{prop}[Preimages of coordinate-wise powers] \label{prop:preimage}
	Let $X \subset \P^n$ be a variety and let $X^{\circ r} \subset \P^n$ be its 
	coordinate-wise $r$-th power. The preimage $\varphi_r^{-1}(X^{\circ r})$ is 
	given by $\bigcup_{\tau\in \G}\tau\cdot X$.
\end{prop}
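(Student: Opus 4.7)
The plan is to prove the two inclusions of $\varphi_r^{-1}(X^{\circ r}) = \bigcup_{\tau \in \G} \tau \cdot X$ separately.

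For the inclusion $\bigcup_{\tau \in \G} \tau \cdot X \subseteq \varphi_r^{-1}(X^{\circ r})$, I would verify that $\varphi_r$ is $\G$-invariant. This is immediate from the definitions: if $\tau$ is represented by a tuple $(\xi_0, \ldots, \xi_n) \in \SC$, then $\varphi_r(\tau \cdot [x_0:\ldots:x_n]) = [\xi_0^r x_0^r : \ldots : \xi_n^r x_n^r] = [x_0^r : \ldots : x_n^r] = \varphi_r([x_0:\ldots:x_n])$ because each $\xi_i^r = 1$. Consequently, for any $x \in X$, the image $\varphi_r(\tau \cdot x) = \varphi_r(x)$ lies in $\varphi_r(X) = X^{\circ r}$, so $\tau \cdot X \subseteq \varphi_r^{-1}(X^{\circ r})$.

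For the reverse inclusion, the crux is identifying the fibers of $\varphi_r$ with the $\G$-orbits. Given $p \in \varphi_r^{-1}(X^{\circ r})$, we have $\varphi_r(p) \in X^{\circ r} = \varphi_r(X)$, so there exists $x \in X$ with $\varphi_r(x) = \varphi_r(p)$. It then suffices to produce $\tau \in \G$ with $p = \tau \cdot x$. The equality $[p_0^r:\ldots:p_n^r] = [x_0^r:\ldots:x_n^r]$ in $\P^n$ means there is $\lambda \in \C^*$ with $p_i^r = \lambda x_i^r$ for all $i$. Picking $\mu \in \C^*$ with $\mu^r = \lambda$, we obtain $(p_i/\mu)^r = x_i^r$, so for each index either both $x_i$ and $p_i$ vanish, or $p_i/\mu = \xi_i x_i$ for some $r$-th root of unity $\xi_i$. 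Setting $\xi_i := 1$ in the vanishing case, the class $\tau := [(\xi_0,\ldots,\xi_n)] \in \G$ then satisfies $p = \tau \cdot x$ in $\P^n$.

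The only step that requires some care is the last one, namely the elementary verification that the fibers of $\varphi_r$ are exactly the $\G$-orbits (equivalently, that $\varphi_r$ realises $\P^n$ as a geometric quotient of itself by $\G$, as already observed in the introduction). No genuine obstacle arises; it is primarily a matter of tracking homogeneous coordinates and handling zero entries consistently when extracting $r$-th roots.
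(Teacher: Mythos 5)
Your proof is correct and takes essentially the same approach as the paper: both boil down to the fact that the fibers of $\varphi_r$ are exactly the $\G$-orbits, combined with $X^{\circ r}=\varphi_r(X)$. The only difference is that the paper simply cites this fiber description, whereas you carry out the elementary coordinate computation (extracting an $r$-th root $\mu$ of the scalar and root-of-unity factors $\xi_i$) that justifies it.
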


\begin{proof}
	This follows from $X^{\circ r} = \varphi_r(X)$ and the fact that 
	$\varphi_r^{-1}(\varphi_r(p))=\{\tau\cdot p \mid \tau\in \G\}$ for all $p\in 
	X$.
\end{proof}

In particular, for $r=2$, we obtain the following geometric description.

\begin{cor}
	The preimage of $X^{\circ 2}$ under $\varphi_2 \colon \P^n \to \P^n$ is the 
	union over the orbit of $X$ under the subgroup of $\Aut(\P^n)$
  generated by the reflections in the coordinate hyperplanes.
\end{cor}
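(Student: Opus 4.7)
The plan is to simply specialise the preceding proposition to the case $r=2$ and then identify the abstract group $\mathcal{G}_2$ with the concrete subgroup of $\Aut(\P^n)$ described in the statement. By Proposition~\ref{prop:preimage}, the preimage $\varphi_2^{-1}(X^{\circ 2})$ equals $\bigcup_{\tau\in\mathcal{G}_2}\tau\cdot X$, so the only thing left to verify is that $\mathcal{G}_2$ acts on $\P^n$ exactly as the group generated by the coordinate-hyperplane reflections.

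To carry out this identification, I would first recall that $\Z_2$ is the group $\{\pm 1\}$ of square roots of unity, so $\Z_2^{n+1}$ acts on $\Cx$ by independent sign changes of the variables $x_0,\ldots,x_n$. Passing to the induced action on $\P^n$, the diagonal subgroup $\{(\xi,\ldots,\xi) \mid \xi^2 = 1\}=\{\pm(1,\ldots,1)\}$ acts trivially, which is exactly the subgroup we factor out in forming $\mathcal{G}_2 = \Z_2^{n+1}/\Z_2$. Hence $\mathcal{G}_2$ embeds faithfully into $\Aut(\P^n)$.

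Next, I would observe that the $i$-th standard generator of $\Z_2^{n+1}$ (with $-1$ in position $i$ and $+1$ elsewhere) acts on $\P^n$ by the map $[x_0:\ldots:x_i:\ldots:x_n]\mapsto [x_0:\ldots:-x_i:\ldots:x_n]$, which is precisely the reflection fixing the coordinate hyperplane $V(x_i)$ pointwise (up to the projective identification). Since these elements generate $\Z_2^{n+1}$, their images generate $\mathcal{G}_2 \subset \Aut(\P^n)$, so $\mathcal{G}_2$ coincides with the subgroup of $\Aut(\P^n)$ generated by the reflections at the coordinate hyperplanes. Combining this identification with Proposition~\ref{prop:preimage} yields the corollary. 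There is no real obstacle here; the only subtle point is keeping track of the fact that the diagonal sign change $-\id$ acts trivially on projective space, which is what makes $\mathcal{G}_2$ (rather than $\Z_2^{n+1}$ itself) the correct group acting on $\P^n$.
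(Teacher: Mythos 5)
Your proposal is correct and takes essentially the same route the paper intends: the corollary is stated without proof as an immediate specialisation of Proposition~\ref{prop:preimage} to $r=2$, and the only content is the identification of $\G[2]$ with the group generated by the coordinate-hyperplane reflections, which you carry out correctly (including the observation that the diagonal $\pm\id$ acts trivially on $\P^n$).
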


In the following theorem, we give a 
degree formula for the coordinate-wise powers of an irreducible variety.

\begin{thm}[Degree formula] \label{prop:DegreeFormula}
  Let $X \subset \P^n$ be an irreducible projective variety. Let 
  $\Stab_r(X) := \{\tau \in \G \mid \tau \cdot X = X\}$
  and
  $\Fix_r(X) := \{\tau \in \G \mid \restr{\tau}{X} = \id_X\}$.
  Then the degree of the $r$-th coordinate-wise power of $X$ is
    \[\deg X^{\circ r} = \frac{|\Fix_r(X)|}{|\Stab_r(X)|} \: r^{\dim X} \deg 
    X.\]
\end{thm}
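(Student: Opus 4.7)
The plan is to derive the formula from a standard Bezout-type argument for the degree of an image under a finite morphism. Since $\varphi_r\colon \P^n \to \P^n$ is the geometric quotient by the action of $\mathcal{G}_r$, it is a finite morphism of degree $|\mathcal{G}_r| = r^n$; more concretely, the preimage of a general hyperplane is a degree-$r$ hypersurface. Consequently, for a general linear subspace $L \subset \P^n$ of codimension $\dim X$, the scheme $\varphi_r^{-1}(L)$ is a complete intersection of $\dim X$ hypersurfaces of degree $r$. By Bezout's theorem applied to $X$ and $\varphi_r^{-1}(L)$, the finite set $X \cap \varphi_r^{-1}(L)$ has cardinality $r^{\dim X}\deg X$. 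On the other hand, identifying this with the fibers of the restricted morphism, $|X \cap \varphi_r^{-1}(L)| = \delta \cdot \deg X^{\circ r}$, where $\delta$ is the generic fiber size of $\varphi_r|_X \colon X \to X^{\circ r}$. Equating the two counts yields
\[
  \deg X^{\circ r} = \frac{r^{\dim X} \deg X}{\delta},
\]
so the task reduces to showing that $\delta = |\Stab_r(X)|/|\Fix_r(X)|$.

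To compute $\delta$, I pick a generic point $p \in X$ and analyze the fiber $\varphi_r|_X^{-1}(\varphi_r(p))$. By \Cref{prop:preimage}, $\varphi_r^{-1}(\varphi_r(p)) = \{\tau \cdot p \mid \tau \in \mathcal{G}_r\}$, so
\[
  \varphi_r|_X^{-1}(\varphi_r(p)) = \{\tau \cdot p \mid \tau \in \mathcal{G}_r,\ \tau\cdot p \in X\}.
\]
The first key observation is that for generic $p$, the condition $\tau\cdot p \in X$ forces $\tau \in \Stab_r(X)$: indeed, if $\tau \notin \Stab_r(X)$ then $X \cap \tau^{-1} X$ is a proper closed subvariety of the irreducible variety $X$, and the union of such subvarieties over the finitely many $\tau \in \mathcal{G}_r \setminus \Stab_r(X)$ is still proper, so a generic $p$ avoids all of them. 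Secondly, distinct elements $\tau, \tau' \in \Stab_r(X)$ yield the same point $\tau\cdot p = \tau' \cdot p$ precisely when $\tau^{-1}\tau'$ fixes $p$. For $\sigma \in \Stab_r(X) \setminus \Fix_r(X)$, the restriction $\sigma|_X$ is a nontrivial automorphism of the irreducible $X$, so its fixed locus in $X$ is a proper closed subset; a generic $p$ avoids the (finite) union of these fixed loci, which means $\Stab_p \cap \Stab_r(X) = \Fix_r(X)$ for generic $p$.

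Combining these two observations, the orbit $\{\tau \cdot p \mid \tau \in \Stab_r(X)\}$ has exactly $|\Stab_r(X)|/|\Fix_r(X)|$ elements, yielding $\delta = |\Stab_r(X)|/|\Fix_r(X)|$ and hence the claimed formula after substitution.

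The main obstacle is ensuring the genericity arguments are sound: specifically, establishing that a generic $p \in X$ simultaneously avoids $X \cap \tau^{-1} X$ for all $\tau \notin \Stab_r(X)$ and the fixed loci of all $\sigma \in \Stab_r(X)\setminus \Fix_r(X)$. Both reduce to the same principle: $X$ is irreducible and each offending locus is a proper closed subvariety, and there are only finitely many of them since $\mathcal{G}_r$ is finite. A secondary but minor point is verifying that $L$ can be chosen generic enough that $\varphi_r^{-1}(L)$ meets $X$ transversely in $\deg(X^{\circ r}) \cdot \delta$ reduced points, which follows from a standard Bertini-type argument applied to the pencil of hyperplanes defining $L$.
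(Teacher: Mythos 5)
Your proof is correct and takes essentially the same approach as the paper: both compute $|X\cap\varphi_r^{-1}(L)| = r^{\dim X}\deg X$ via Bézout and then identify the generic fiber behavior using the observation that for generic $p\in X$ the point-stabilizer in $\G$ is $\Fix_r(X)$ while $\tau\cdot p\in X$ forces $\tau\in\Stab_r(X)$. The only cosmetic difference is bookkeeping: the paper first inflates to the full $\varphi_r$-preimage of $X^{\circ r}\cap L$ via coset representatives of $\Stab_r(X)$ and then divides by the $\varphi_r$-fiber size $|\G|/|\Fix_r(X)|$, whereas you work directly with the restricted morphism $\varphi_r|_X$ and its generic fiber size $\delta = |\Stab_r(X)|/|\Fix_r(X)|$, so the factor $|\G|$ never appears.
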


\begin{proof}
  Let $H_1, \ldots, H_k \subset \P^n$ for $k := \dim X^{\circ r} = \dim X$ be 
  general hyperplanes whose common intersection with $X^{\circ r}$ consists of 
  finitely many reduced points. We want to determine $|X^{\circ r} \cap 
  \bigcap_{i=1}^k H_i|$. By \cref{prop:preimage}, we have
    \[\varphi_r^{-1}\left(X^{\circ r} \cap \bigcap_{i=1}^k H_i\right) = 
    \bigcup_{\tau \in \G} \tau \cdot \left(X \cap \bigcap_{i=1}^k 
    \varphi_r^{-1} H_i\right).\]
  Note that each $\varphi_r^{-1} H_i$ is a hypersurface of degree~$r$ fixed under the $\G$-action, and their common intersection with $X$ consists of finitely many reduced points by Bertini's theorem (as in \cite[3.4.8]{FOV99}). By Bézout's theorem, $\left|X \cap \bigcap_{i=1}^k \varphi_r^{-1} H_i\right| = r^k \deg X$.
  
  We note that $Z := X \cap \left( \bigcup_{\tau \in \G \setminus \Stab_r(X)} 
  \tau \cdot X \right)$ is of dimension $<k$ by irreducibility of $X$. 
  Therefore, the common 
  intersection of $k$ general hyperplanes $H_i$ with $\varphi_r(Z)$ is empty. 
  This implies that the intersection of $\tau \cdot X$ and $\tau' \cdot X$ does not meet $\bigcap_{i=1}^k \varphi_r^{-1} H_i$ for all $\tau, \tau' \in \G$ with $\tau' \cdot \tau^{-1} \notin \Stab_r(X)$.
  Hence, the above can be written as a disjoint union
  \[\bigcup_{\tau \in \G} \tau \cdot \left(X \cap \bigcap_{i=1}^k 
  \varphi_r^{-1} H_i\right) = \bigsqcup_{j=1}^s
  \tau_j \cdot \left(X \cap \bigcap_{i=1}^k \varphi_r^{-1} H_i\right),\]
  where $\tau_1, \ldots, \tau_s \in \G$ for $s = |\G|/|\Stab_r(X)|$ represent the cosets of $\Stab_r(X)$ in $\G$.
  
  In particular,
    \[\left|\varphi_r^{-1}\left(X^{\circ r} \cap \bigcap_{i=1}^k 
    H_i\right)\right| = 
    \frac{|\G|}{|\Stab_r(X)|} \: r^k \deg X.\]
  For a general point $p \in X$, we have $\{\tau \in \G \mid \tau \cdot p = p\} = \Fix_r(X)$. Then \cref{prop:preimage} shows that a general point of $X^{\circ r} = \varphi_r(X)$ has $|\G|/|\Fix_r(X)|$ preimages under $\varphi_r$, so for general hyperplanes $H_i$ we conclude
    \[\deg X^{\circ r} = \left|X^{\circ r} \cap \bigcap_{i=1}^k H_i\right| = 
    \frac{|\Fix_r(X)|}{|\G|} 
    \: 
    \left|\varphi_r^{-1}\left(X^{\circ r} \cap \bigcap_{i=1}^k 
    H_i\right)\right| = 
    \frac{|\Fix_r(X)|}{|\Stab_r(X)|} \: r^k \deg X.\]
\end{proof}

\subsection{Orthostochastic matrices} \label{ssec:Orthostochastic}

We use \cref{prop:DegreeFormula} to compute the degree of the variety of 
orthostochastic matrices. By $\Orth{m} \subset \P^{m^2}$ (resp.\ $\SOrth{m} 
\subset \P^{m^2}$) we mean the projective closure of the affine variety of 
orthogonal (resp.\ special orthogonal) matrices in $\A^{m^2}$. It was shown in 
\cite{Dey} that the problem of deciding whether a bivariate polynomial can be 
expressed as the determinant of a definite/monic symmetric linear matrix 
polynomial (a \defstyle{determinantal representation}) is closely linked to the 
problem of finding the defining equations of the variety $\Orth{m}^{\circ 2}$. 
In the case $m=3$, the defining equations of $\Orth{3}^{\circ 2}$ are known 
\cite[Proposition~3.1]{CD} and based on this knowledge, it was shown in 
\cite[Section~4.2]{DeyPhD} how to compute a determinantal representation for a 
cubic 
bivariate polynomial or decide that none exists. For arbitrary $m$, the ideal 
of defining equations may be very complicated, but we are still able to compute 
its degree:

\begin{prop}[Degree of $\Orth{m}^{\circ 2}$] \label{prop:DegreeOrthIm}
  We have $\Orth{m}^{\circ 2} = \SOrth{m}^{\circ 2}$ and its degree is
  \[\deg \Orth{m}^{\circ 2} = 2^{(m-1)^2} \; 
  \frac{\deg\Orth{m}}{2^{\binom{m+1}{2}}} \leq 2^{(m-1)^2}.\]
\end{prop}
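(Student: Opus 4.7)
The plan is to reduce to the irreducible variety $\SOrth{m}$ and then apply \cref{prop:DegreeFormula}. The variety $\Orth{m}$ has two irreducible components, $\SOrth{m}$ and $R\cdot\SOrth{m}$ where $R = \diag(-1,1,\ldots,1)$. Since $R$ only flips the sign of the first row, the matrices $A$ and $R\cdot A$ share the same squared entries, so $\varphi_2$ identifies the two components and $\Orth{m}^{\circ 2} = \SOrth{m}^{\circ 2}$. Moreover, left multiplication by $R$ is a linear automorphism of $\P^{m^2}$, so both components have the same degree and $\deg\SOrth{m} = \tfrac{1}{2}\deg\Orth{m}$.

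To apply \cref{prop:DegreeFormula} I need $|\Fix_2(\SOrth{m})|$ and $|\Stab_2(\SOrth{m})|$. I identify elements of $\G$ with sign patterns $(s_{ij})\in\{\pm 1\}^{m^2}$ on the affine coordinates, acting by $A\mapsto(s_{ij}A_{ij})$ on the chart $\A^{m^2}\subset\P^{m^2}$. Since a general element of $\SOrth{m}$ has all entries nonzero, only the trivial sign pattern fixes $\SOrth{m}$ pointwise, giving $|\Fix_2(\SOrth{m})|=1$. For the stabilizer, any pair of sign-diagonal matrices $D_1,D_2$ produces a row-column pattern $s_{ij}=(D_1)_{ii}(D_2)_{jj}$ acting as $A\mapsto D_1AD_2$, which preserves $\Orth{m}$ and preserves $\SOrth{m}$ exactly when $\det D_1\det D_2=1$. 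For the converse, if $s$ preserves $\SOrth{m}$, then the vanishing of $\sum_k s_{ki}s_{kj}A_{ki}A_{kj}$ on $\SOrth{m}$ for $i\neq j$ forces each product $s_{ki}s_{kj}$ to be independent of $k$ (because $A\mapsto(A_{ki}A_{kj})_k$ is dominant onto the hyperplane $\sum_k y_k=0$), and an elementary argument then recovers $s_{ij}=\epsilon_i\delta_j$. The subgroup $\{(\epsilon,\delta)\in(\{\pm1\}^m)^2 : \prod\epsilon_i\prod\delta_j=1\}$ has order $2^{2m-1}$ and maps $2$-to-$1$ onto the corresponding sign patterns, with kernel $\{(\mathbf 1,\mathbf 1),(-\mathbf 1,-\mathbf 1)\}$, so $|\Stab_2(\SOrth{m})|=2^{2m-2}=2^{2(m-1)}$.

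Substituting these into \cref{prop:DegreeFormula} with $\dim\SOrth{m}=\binom{m}{2}$,
\[\deg\Orth{m}^{\circ 2} \;=\; \frac{1}{2^{2(m-1)}}\cdot 2^{\binom{m}{2}}\cdot\frac{\deg\Orth{m}}{2},\]
and the identity $\binom{m}{2}-(2m-1) = (m-1)^2-\binom{m+1}{2}$ rewrites this as $2^{(m-1)^2}\deg\Orth{m}/2^{\binom{m+1}{2}}$. The inequality $\deg\Orth{m}\leq 2^{\binom{m+1}{2}}$ then follows from Bézout's theorem applied to the $\binom{m+1}{2}$ symmetric quadratic relations coming from $A^T A - I = 0$, which form a regular sequence of codimension $\binom{m+1}{2}$ cutting out $\Orth{m}$.

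The main technical obstacle I anticipate is the converse direction of the stabilizer computation: rigorously showing that every sign-pattern symmetry of $\SOrth{m}$ is of row-column form. The reduction to $\SOrth{m}$, the trivial fix-point count, the exponent bookkeeping, and the Bézout bound are comparatively routine assembly.
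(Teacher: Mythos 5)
Your proof is correct and follows the same overall architecture as the paper's: reduce to the irreducible $\SOrth{m}$, compute $|\Fix_2(\SOrth{m})|=1$, compute $|\Stab_2(\SOrth{m})|=2^{2m-2}$ by characterising stabiliser elements as rank-one sign patterns $\epsilon\delta^T$ with $\prod\epsilon_i=\prod\delta_j$, plug into \cref{prop:DegreeFormula} with $\dim\SOrth{m}=\binom{m}{2}$, and finish with a B\'ezout bound. The genuine difference is in the stabiliser inclusion $\Stab_2(\SOrth{m})\subset\{\text{rank-one sign patterns}\}$. The paper proves it by an explicit construction: for columns $i,j$ of $S$ not proportional, it builds test columns with entries $1,2,4,\ldots,2^{m-2}, \pm(2^{m-1}-1)$ and uses uniqueness of binary representations to force a contradiction. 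You instead argue that for all $A\in\SOrth{m}$ the form $\sum_k s_{ki}s_{kj}A_{ki}A_{kj}$ must vanish, and that the map $A\mapsto (A_{ki}A_{kj})_k$ is dominant onto the hyperplane $\{\sum_k y_k=0\}$, forcing $(s_{ki}s_{kj})_k$ to be proportional to $(1,\ldots,1)$. This dominance claim is true and your route is conceptually cleaner (it packages the paper's bespoke numerics into one geometric statement), but it does carry a verification burden: you need to know that a generic orthonormal pair $(u,v)$ over $\C$ with $u_kv_k=y_k$ exists and extends to a special orthogonal matrix, which for $m=2$ requires checking the determinant separately (you cannot flip a spare column). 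The paper's argument avoids any genericity/dominance reasoning at the cost of being more ad hoc. Both are valid.

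One small imprecision at the end: you say the $\binom{m+1}{2}$ quadrics from $A^TA=I$ form a regular sequence \emph{cutting out} $\Orth{m}$. That is true affinely, but the projective closure $\Orth{m}\subset\P^{m^2}$ is generally a proper component of the intersection of the homogenised quadrics (for $m=3$ one has $\deg\Orth 3 = 16 < 64 = 2^{\binom{4}{2}}$, so there is definitely extra stuff at infinity). The correct statement is the refined B\'ezout inequality: the sum of the degrees of \emph{all} irreducible components of the projective intersection is at most $2^{\binom{m+1}{2}}$, so in particular $\deg\Orth{m}\leq 2^{\binom{m+1}{2}}$. This is what the paper uses, and it gives an inequality, not the equality your phrasing would suggest.
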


\begin{proof}
  The variety $\Orth{m}$ consists of two connected components that are 
  isomorphic to $\SOrth{m}.$ The images of these components under $\varphi_2 
  \colon \P^{m^2} \to \P^{m^2}$ coincide. In particular, $\Orth{m}^{\circ 2} = 
  \SOrth{m}^{\circ 2}$ and $\deg \Orth{m} = 2 \deg \SOrth{m}$. We determine 
  $\Fix_2(\SOrth{m})$ and $\Stab_2(\SOrth{m})$.
  
  Identify elements of $\G[2]$ with $m\times m$-matrices whose entries are $\pm 
  1$. Then a group element $S \in \G[2] = \{\pm 1\}^{m \times m}$ acts 
  on the 
  affine open subset $\A^{m^2} \subset \P^{m^2}$ corresponding to $m \times 
  m$-matrices $M \in \C^{m\times m}$ 
  as $S \circ M$, where $S \circ M$ denotes the Hadamard product (i.e.\ 
  entry-wise product) of matrices. Clearly, $\Fix_2(\SOrth{m})$ is trivial, or 
  else every special 
  orthogonal matrix would need to have a zero entry at a certain position.

  We claim that $\Stab_2(\SOrth{m}) \subset \{S \in \{\pm 1\}^{m \times m} \mid 
  \rank S = 1\}$. Indeed, assume that $S \in \{\pm 1\}^{m \times m}$ lies in 
  $\Stab_2(\SOrth{m})$, but is not of rank~$1$. Then $m \geq 2$ and we may 
  assume that the first two columns of $S$ are linearly independent. 
  Consider the vectors $u,v \in \C^m$ given by
  \[u_i := \begin{cases} 1 &\text{if } i < m, \\ -1 &\text{if } i=m \end{cases} 
  \qquad \text{and} \qquad
  v_i := \begin{cases} 2^{i-1} &\text{if } i < m, \\ 2^{m-1}-1 &\text{if } i=m \end{cases} \qquad \text{for all $i \in \{1, \ldots,m\}.$}\]
  Since $u$ and $v$ are orthogonal, we can find a special orthogonal matrix $M 
  \in \C^{m \times m}$ whose first two columns are $M_{\bullet 1} 
  = u/\|u \|_2$ and $M_{\bullet 2} = v/\| v \|_2$. But $S \in 
  \Stab_2(\SOrth{m})$, so the matrix $S \circ M$ must be a special orthogonal 
  matrix. In particular, the first two columns of $S \circ M$ must be 
  orthogonal, i.e.\
  \begin{equation} \label{eq:powersOf2}
  0 = \sum_{i=1}^m (S_{i1}u_i) (S_{i2} v_i) = -(S_{m1} S_{m2}) (2^{m-1}-1) + 
  \sum_{i=1}^{m-1} (S_{i1} S_{i2}) 2^{i-1}.
  \end{equation}
  Since $S_{i1} S_{i2} = \pm 1$ for all $i$, we have $|\sum_{i=1}^{m-1} (S_{i1} 
  S_{i2}) 2^{i-1}| \leq 2^{m-1}-1$, and equality in \eqref{eq:powersOf2} holds if 
  and only if $S_{i1} S_{i2} = S_{j1} S_{j2}$ for all $i,j \in \{1,\ldots,m\}$. 
  However, this contradicts the linear independence of the first two columns of $S$.
  Hence, the claim follows.
  
  Any rank~1 matrix in $\{\pm 1\}^{m\times m}$ can be uniquely written as $u 
  v^T$ with $u,v \in \{\pm 1\}^m$ and $u_1=1$. Such a rank~1 matrix $S = uv^T$ 
  lies in $\Stab_2(\SOrth{m})$ if and only if for each special orthogonal 
  matrix $M \in \C^{m \times m}$ the matrix
    \[S \circ M = (uv^T) \circ M = \diag(u_1,\ldots,u_m) \: M \:  
    \diag(v_1,\ldots,v_m)\]
  is again a special orthogonal matrix. This is true if and only if 
  $\prod_{i=1}^m u_i = \prod_{i=1}^m v_i$. Therefore,
  \begin{align*}
    \Stab_2(\SOrth{m}) &= \{u v^T \mid u,v \in \{\pm 1\}^m, \: u_1=1, \: 
    {\textstyle \prod_i u_i = \prod_i v_i}\},
  \end{align*}
  and, thus, $|\Stab_2(\SOrth{m})| = 2^{2m-2}$.

  Since $\SOrth{m} \subset \P^{m^2}$ is irreducible, applying 
  \cref{prop:DegreeFormula} gives
  \[
  \deg \SOrth{m}^{\circ 2} = \frac{1}{2^{2m-2}} \: 2^{\binom{m}{2}} \deg 
  \SOrth{m} = 2^{\binom{m}{2}-2m+1} \deg \Orth{m}
  = 2^{(m-1)^2} \; \frac{\deg\Orth{m}}{2^{\binom{m+1}{2}}}.
  \]
  
  Finally, we observe that the affine variety of orthogonal matrices in 
  $\A^{m^2}$ is an intersection of $\smash{\binom{m+1}{2}}$ 
  quadrics which correspond to the polynomials given by the equation $M^T M = 
  \id$ satisfied by orthogonal matrices $M \in \C^{m \times m}$. Therefore, its 
  projective closure 
  $\Orth{m} 
  \subset \P^{m^2}$ must satisfy $\smash{\deg \Orth{m} \leq 
  2^{\binom{m+1}{2}}}$. This 
  shows $\deg \Orth{m}^{\circ 2} \leq 2^{(m-1)^2}$.
\end{proof}

\begin{remark} \label{rem:DegreeOfOrth}
  The degree of $\Orth{m}$ (resp.\ $\SOrth{m}$) is known for all $m$ by 
  \cite{BBBKR}, namely
  \[\deg \Orth{m} = 2^m \det \left(\: \binom{2m-2i-2j}{m-2i} \:\right)_{1\leq 
    i,j\leq \lfloor m/2 \rfloor}.\]
  Table~\ref{tbl:DegreesOrthIm} shows the resulting degrees of 
  $\Orth{m}^{\circ 2} = \SOrth{m}^{\circ 2}$ for some values of $m$.
  
  \begin{table}[h]
    \boxed
    {
      \begin{tabular}{c|cccccccc}
        $m$ & 1 & 2 & 3 & 4 & 5 & 6 & 7 & 8 \\
        \hline $\deg \SOrth{m}$ & 1 & 2 & 8 & 40 & 384 & 4768 & 111616 & 
        3433600 \\
        $\deg \SOrth{m}^{\circ 2}$ & 1 & 1 & 4 & 40 & 1536 & 152576 & 57147392 
        & 56256102400
      \end{tabular}
    }
    \caption{The degrees of $\SOrth{m}$ and $\SOrth{m}^{\circ 2}$ in 
    comparison.}
    \label{tbl:DegreesOrthIm}
  \end{table}
\end{remark}

\subsection{Linear spaces}

We now determine the degree of coordinate-wise powers $L^{\circ r}$ for a 
linear space $L \subset \P^n$, based on \cref{prop:DegreeFormula}. It can be 
expressed in terms of the combinatorics captured by the matroid of $L \subset 
\P^n$. We briefly recall some basic definitions for matroids
associated to 
linear 
spaces in $\P^n$. We refer to \cite{Oxl} for a detailed introduction to matroid 
theory.

Let $L \subset \P^n$ be a linear space. The combinatorial information about the 
intersection of $L$ with the linear coordinate spaces in $\P^n$ is captured in 
the \defstyle{linear matroid} $\mathcal M_L$. It is the collection of 
index sets $I \subset \{0,1,\ldots,n\}$ such that $L$ does not intersect 
$V(\{x_i \mid i \notin I\})$. Formally,
\[\mathcal M_L := \{I \subset \{0,1,\ldots,n\} \; \mid \; L \cap V(\{x_i \mid 
i \notin I\}) = \emptyset\}.\]
Different conventions about linear matroids exist in the literature, and some authors take a dual definition for the linear matroid of $L$.

The set $\{0,1,\ldots,n\}$ is the 
\defstyle{ground set} of the matroid. Index sets $I \in \mathcal M_L$ are 
called \defstyle{independent}, while index sets $I \in \Pow(\{0,1,\ldots,n\}) 
\setminus \mathcal M_L$ are called dependent.  An index 
$i \in \{0,1,\ldots,n\}$ is called a \defstyle{coloop} of $\mathcal M_L$ if, 
for all $I \subset 
\{0,1,\ldots,n\}$, the condition $I \in \mathcal M_L$ holds if and only if $I 
\cup \{i\} \in \mathcal M_L$ holds. Geometrically, an index $i \in 
\{0,1,\ldots,n\}$ is a coloop of $\mathcal M_L$ if and only if $L \subset 
V(x_i)$. 

A subset $E \subset \{0,1,\ldots,n\}$ is 
called \defstyle{irreducible} if there is no non-trivial partition $E = E_1 
\sqcup E_2$ with 
\[I \in \mathcal M_L \quad \Leftrightarrow \quad I\cap E_1 \in \mathcal M_L 
\text{ and } 
I\cap E_2 \in \mathcal M_L \qquad \forall I \subset E.\]
The maximal irreducible subsets of $\{0,1,\ldots,n\}$ are called 
\defstyle{components} of $\mathcal M_L$ and they form a partition of 
$\{0,1,\ldots,n\}$. Geometrically, a component of $\mathcal M_L$ is a minimal non-empty
subset of 
$\{0,1,\ldots,n\}$ with the property that $L \cap V(x_i \mid 
i \in I)$ and $L \cap V(x_i \mid i \notin I)$ together span the linear space 
$L$.

In the following result, we determine the degree of $L^{\circ r} \subset \P^n$ 
as an invariant of the linear matroid $\mathcal M_L$.

\begin{thm} \label{prop:LinearDegreeFormula}
  Let $L \subset \P^n$ be a linear space of dimension $k$. Let $s$ be the 
  number of coloops and $t$ the number of components of the associated linear 
  matroid $\mathcal M_L$. Then 
  \[\deg L^{\circ r} = r^{k + s - t + 1}.\]
\end{thm}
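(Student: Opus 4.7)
The plan is to invoke the degree formula in Theorem \ref{prop:DegreeFormula}, which is applicable because $L$ is irreducible. Since $\deg L = 1$ and $\dim L = k$, the claim reduces to showing the identity
\[
  \frac{|\Fix_r(L)|}{|\Stab_r(L)|} = r^{s-t+1},
\]
and I would compute each factor separately. The two geometric ingredients are those already recalled in the paper: that $i$ is a coloop of $\mathcal M_L$ iff $L \subset V(x_i)$, and that the components of $\mathcal M_L$ induce a decomposition of the affine cone $\tilde L$ as a direct sum of coordinate-supported subspaces.

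For $|\Fix_r(L)|$, I would lift an element of $\G = \SC/\Z_r$ to a tuple $(\zeta_0,\ldots,\zeta_n) \in \SC$. Its action is the identity on $L$ precisely when there exists $\lambda \in \Z_r$ with $\zeta_i = \lambda$ whenever a generic point of $L$ has $x_i \neq 0$, i.e.\ whenever $i$ is a non-coloop; the $\zeta_i$ for coloops are unconstrained. Counting $r$ choices for $\lambda$ and $r$ choices for each of the $s$ coloops gives $r^{s+1}$ lifts, so $|\Fix_r(L)| = r^s$ after dividing by the diagonal $\Z_r$.

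For $|\Stab_r(L)|$, I would use the decomposition $\tilde L = \bigoplus_{j=1}^t \tilde L_j$ with $\tilde L_j \subset \opspan(e_i : i \in E_j)$. Since diagonal rescalings preserve each coordinate subspace, $\tau$ stabilizes $L$ iff it stabilizes every $\tilde L_j$. A coloop singleton component $E_j = \{i\}$ has $\tilde L_j = 0$ and imposes no constraint on $\zeta_i$. For a non-coloop component $E_j$, the $\tau$-eigenspace decomposition produces a coordinate-supported splitting of $\tilde L_j$ along the partition $P_\mu = \{i \in E_j : \zeta_i = \mu\}$; this partition gives a matroid direct-sum decomposition of $E_j$, and irreducibility of $E_j$ then forces the partition to be trivial, i.e.\ $\zeta_i$ takes a single value on $E_j$. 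Stabilizer lifts therefore consist of $s$ free coloop coordinates plus one constant per non-coloop component, giving $r^s \cdot r^{t-s} = r^t$ lifts in $\SC$ and hence $|\Stab_r(L)| = r^{t-1}$ after the diagonal quotient.

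Combining the two counts would then yield $\deg L^{\circ r} = r^{s-(t-1)}\,r^k = r^{k+s-t+1}$. The main obstacle is the stabilizer computation: one has to verify that the eigenspace partition $\{P_\mu\}_\mu$ really gives a valid matroid direct-sum decomposition of $E_j$ in the sense of the paper's definition of irreducibility. This follows from the coordinate-supported splitting $\tilde L_j = \bigoplus_\mu (\tilde L_j \cap \opspan(e_i : i \in P_\mu))$ inducing a direct-sum decomposition of the underlying matroid, but the translation between the paper's "intersect-and-span" characterization of components and the standard matroid-theoretic direct-sum language needs to be written out carefully.
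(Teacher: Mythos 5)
Your proposal is correct and follows essentially the same route as the paper's own proof: reduce via Theorem~\ref{prop:DegreeFormula}, compute $|\Fix_r(L)|$ by observing that a lift in $\SC$ must act as a scalar on the affine cone $W$ (forcing equal entries on all non-coloops, free entries on coloops, giving $r^s$), and compute $|\Stab_r(L)|$ by the $\tau$-eigenspace decomposition of $W$ into coordinate-supported pieces, which by the structure of components of $\mathcal M_L$ forces $\tau$ to be constant on each component (giving $r^t$ lifts, hence $r^{t-1}$). The only stylistic difference is that you first split $W = \bigoplus_j \tilde L_j$ along components and then intersect with eigenspaces inside each $\tilde L_j$, whereas the paper writes $W = \bigoplus_\xi W \cap \langle e_i : \tau_i = \xi\rangle$ directly and reads off that each level set of $\tau$ must be a union of components; these are the same observation nested in opposite order. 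The matroid-theoretic point you flag at the end (that a coordinate-supported direct-sum splitting of $W$ along a partition of a component yields a matroid direct-sum and hence must be trivial by irreducibility) is indeed the load-bearing step, and the paper uses it too, also without spelling out the translation between the span characterization of components and the direct-sum characterization of irreducibility.
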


\begin{proof}
  By \cref{prop:DegreeFormula}, we need to determine the cardinality 
  of the groups
  \[\Stab_r(L) = \{\tau \in \G \mid \tau \cdot L = L\} \qquad \text{and} \qquad 
  \Fix_r(L) = 
  \{\tau \in \G \mid \restr{\tau}{L} = \id_L\}.\]
  Consider the affine cone over $L$, which is a $(k+1)$-dimensional subspace 
  $W \subset \C^{n+1}$. We denote the canonical basis of $\C^{n+1}$ by $e_0, 
  \ldots, e_n$.
  
  We observe that $|\Fix_r(L)| = |\{\tau \in \SC \mid \restr{\tau}{W} = 
  \id\}|$. For $\tau \in \SC$, we have
  \begin{align*}
  \restr{\tau}{W} = \id \quad 
  &\Leftrightarrow \quad W \subset \langle e_i \mid i \in \{0,1,\ldots,n\} 
  \text{ s.t.\ } \tau_i = 1 \rangle \\
  &\Leftrightarrow \quad L \subset V(x_i) \quad \forall i \in 
  \{0,1,\ldots,n\} 
  \text{ s.t.\ } \tau_i \neq 1 \\
  &\Leftrightarrow \quad \tau_i = 1 \text{ for all $i \in \{0,1,\ldots,n\}$
    which 
    are not a coloop of $\mathcal M_L$}.
  \end{align*}
  From this, we see that $|\Fix_r(L)| = r^s$.
  
  For the stabiliser of $L$, we have $|\Stab_r(L)| = \frac{1}{r}\: |\{\tau \in 
  \SC \mid \tau \cdot W = W\}|$. 
  If $\tau \in \SC$, then
  \begin{align*}
  \tau \cdot W = W \quad
  &\Leftrightarrow \quad W = \bigoplus_{\xi \in \Z_r} W \cap \langle e_i \mid i 
  \in \{0,1,\ldots,n\} \text{ s.t.\ } \tau_i = \xi \rangle \\
  &\Leftrightarrow \quad \text{For each $\xi \in \Z_r$, the set $\{i \in 
    \{0,1,\ldots,n\} \mid \tau_i = \xi\}$ is a union of} \\
  &\phantom{{}\Leftrightarrow{} {}\quad{}} \text{components of $\mathcal M_L$.} 
  \\
  &\Leftrightarrow \quad \forall C \subset \{0,1,\ldots,n\} \text{ component of 
  } 
  \mathcal M_L, \; \exists \xi \in \Z_r \text{ s.t. } \tau_i = \xi \text{ for 
    all } 
  i 
  \in C.
  \end{align*}
  In particular, there are precisely $r^t$ elements $\tau \in \SC$ with $\tau 
  \cdot W = W$. We deduce that $|\Stab_r(L)| = r^{t-1}$, which concludes the 
  proof by \cref{prop:DegreeFormula}.
\end{proof}

\begin{cor} \label{cor:DegreeMatroidInvariant}
  The degree of the coordinate-wise $r$-th power of a linear space only depends 
  on the associated linear matroid. If $L_1, L_2 \subset \P^n$ are linear 
  spaces such that the linear matroids $\mathcal M_{L_1}$ and $\mathcal 
  M_{L_2}$ are isomorphic (i.e.\ they only differ by a permutation of 
  $\{0,1,\ldots,n\}$), then $L_1^{\circ r} \subset \P^n$ and $L_2^{\circ r}
  \subset \P^n$ have the same degree.
\end{cor}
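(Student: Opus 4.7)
The plan is to read off \cref{prop:LinearDegreeFormula} and observe that every quantity in the formula $\deg L^{\circ r} = r^{k+s-t+1}$ depends only on the matroid $\mathcal{M}_L$. First, I would note that $k = \dim L$ equals one less than the rank of $\mathcal{M}_L$, i.e.\ the maximal cardinality of an independent set of $\mathcal{M}_L$, so $k$ is a matroid invariant. Next, the number of coloops $s$ and the number of components $t$ were defined in the preceding discussion via conditions stated purely in terms of the matroid structure (namely, membership in $\mathcal{M}_L$ of various subsets of $\{0,1,\ldots,n\}$), so both $s$ and $t$ are intrinsic to $\mathcal{M}_L$ as well. Consequently, the exponent $k+s-t+1$ is determined by $\mathcal{M}_L$, which gives the first assertion.

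For the second assertion, suppose $\sigma$ is a permutation of $\{0,1,\ldots,n\}$ identifying $\mathcal{M}_{L_1}$ with $\mathcal{M}_{L_2}$. Then $\sigma$ bijects independent sets with independent sets, hence preserves rank; it bijects coloops with coloops; and, because irreducibility of a subset $E$ is defined via $\mathcal{M}_L$ alone, it bijects components with components. Therefore $k_1 = k_2$, $s_1 = s_2$, and $t_1 = t_2$, and applying \cref{prop:LinearDegreeFormula} to each $L_i$ yields $\deg L_1^{\circ r} = \deg L_2^{\circ r}$.

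There is no real obstacle here: the corollary is essentially a repackaging of \cref{prop:LinearDegreeFormula} that foregrounds its combinatorial content. The only minor point of care is to confirm that the notions of \emph{coloop} and \emph{component}, which the paper also motivates geometrically (via $L \subset V(x_i)$, resp.\ via spans of coordinate slices of $L$), really do agree with their purely matroid-theoretic descriptions stated alongside them; once this is checked, both $s$ and $t$ transfer unambiguously across matroid isomorphisms, and the corollary follows at once.
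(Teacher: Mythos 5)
Your argument is exactly the paper's (which leaves the corollary without an explicit proof, treating it as immediate from \cref{prop:LinearDegreeFormula}): all three quantities $k$, $s$, $t$ in the formula $\deg L^{\circ r} = r^{k+s-t+1}$ are matroid invariants, and a permutation identifying $\mathcal M_{L_1}$ with $\mathcal M_{L_2}$ preserves rank, coloops, and components. Correct and in line with the intended reading.
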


\begin{cor} \label{cor:generalLinearDegree}
  Let $L \subset \P^n$ be a linear space of dimension~$k$. Then $\deg 
  L^{\circ r} \leq r^k$. For general k-dimensional linear spaces in 
  $\P^n$, equality holds.
\end{cor}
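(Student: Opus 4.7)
The plan is to combine \cref{prop:LinearDegreeFormula} with two structural facts about the matroid $\mathcal{M}_L$: every coloop is a singleton component of $\mathcal{M}_L$, and the non-coloops contribute at least one further component. Writing $s$ for the number of coloops and $t$ for the number of components, these together give $t \geq s + 1$, hence
\[\deg L^{\circ r} = r^{k + s - t + 1} \leq r^k.\]

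For the first fact, I would fix a coloop $i$, so that the affine cone $W \subset \C^{n+1}$ of $L$ lies in $\opspan(e_j : j \neq i)$, and verify that for any $E \ni i$ the partition $E = \{i\} \sqcup (E \setminus \{i\})$ satisfies the matroid splitting condition. The verification splits into the cases $i \in I$ and $i \notin I$; in both cases the required biconditional reduces to the coloop property $I \in \mathcal{M}_L \Leftrightarrow I \cup \{i\} \in \mathcal{M}_L$ together with $\emptyset \in \mathcal{M}_L$ (which holds since $V(x_0, \ldots, x_n) = \emptyset$ in $\P^n$). Consequently the component containing a coloop cannot be larger than $\{i\}$. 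The existence of a non-coloop follows from $L \neq \emptyset$ in $\P^n$: otherwise $L \subset \bigcap_i V(x_i) = \emptyset$. This yields $t \geq s + 1$.

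For equality in the generic case (nontrivial when $k < n$), I would show that a general $k$-dimensional $L \subset \P^n$ satisfies $s = 0$ and $t = 1$. The locus $\{L : L \subset V(x_i)\}$ is a proper closed subvariety of $\Gr(k+1, n+1)$, so $s = 0$ generically. For $t = 1$, any nontrivial partition $\{0, \ldots, n\} = E_1 \sqcup E_2$ would induce a matroid decomposition only if $W = (W \cap \opspan(e_i : i \in E_1)) \oplus (W \cap \opspan(e_i : i \in E_2))$. The standard dimension count gives $\dim(W \cap \opspan(e_i : i \in E_\ell)) = \max(0, k + |E_\ell| - n)$ for generic $W$, and a short case analysis on whether each $|E_\ell|$ exceeds $n-k$ shows that the sum is strictly less than $k + 1 = \dim W$ whenever the partition is nontrivial and $k < n$. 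Since there are only finitely many partitions, the generic $W$ avoids all such decompositions, so $t = 1$.

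The main obstacle is the matroid bookkeeping, in particular verifying that coloops yield singleton components under the authors' splitting-based formulation of irreducibility; this is elementary but requires carefully unpacking the biconditional. Once that is in place, the dimension count for the generic half of the statement is routine.
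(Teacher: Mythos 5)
Your argument is correct and follows essentially the same route as the paper: apply \cref{prop:LinearDegreeFormula}, observe that each coloop is a singleton component while the (necessarily nonempty) set of non-coloops contributes at least one more component, so $t \geq s+1$ and hence $k+s-t+1 \leq k$; for a general $L$ show $s=0$, $t=1$. Two minor remarks. First, you correctly derive $t \geq s+1$; the paper's own proof states ``hence $t \leq s+1$'', which is a sign typo (the direction needed for $\deg L^{\circ r} \leq r^k$ is $t \geq s+1$, exactly as you have it). Second, for the generic case the paper simply identifies $\mathcal M_L$ as the uniform matroid $U_{n-k,\,n+1}$ (also with a small off-by-one in the stated rank) and reads off $s=0$, $t=1$ from that, whereas you verify $t=1$ directly by a dimension count showing that for generic $W$ no nontrivial coordinate partition $E_1 \sqcup E_2$ can yield $W = (W\cap\langle e_i : i\in E_1\rangle)\oplus(W\cap\langle e_i : i\in E_2\rangle)$; both amount to the same thing via the characterization of components used inside the proof of \cref{prop:LinearDegreeFormula}. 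Your observation that the equality claim is vacuous or fails at $k=n$ is accurate — the corollary as stated implicitly assumes $k<n$ — and correctly isolated by your ``nontrivial when $k<n$'' caveat.
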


\begin{proof}
  Every coloop of $\mathcal M_L$ forms a component of $\mathcal M_L$ and the 
  set $\{0,1,\ldots,n\}\setminus \{\text{coloops}\}$ is a union of components, 
  hence $t\leq s+1$.
  Therefore, by Proposition~\ref{prop:LinearDegreeFormula}, $\deg L^{\circ r} 
  \leq r^k$. 
  A \emph{general} linear space $L \in \Gr(k,\P^n)$ intersects only those linear coordinate space in $\P^n$ of dimension at least $n-k$. Therefore, the linear matroid of a general linear space is the uniform matroid: $\mathcal M_L = \{I \subset \{0,1,\ldots,n\}\, \mid\, |I| \leq n-k-1\}$. It is easily checked from the definitions that this matroid has no coloops and only one component.
\end{proof}

\begin{ex} \label{ex:DegreeHyperplane}
  We illustrate \cref{prop:LinearDegreeFormula} for hyperplanes. Up to 
  permuting and rescaling the coordinates of $\P^n$, each hyperplane is given 
  by 
  $L = V(f)$ with $f = x_0+\ldots+x_m$ for some $m \in \{0,1,\ldots,n\}$. Its 
  linear matroid is 
  \[\mathcal M_L = \{\emptyset, \{0\}, \{1\}, \ldots, \{m\}\}.\]
  The components of this matroid are the set $\{0,1,\ldots,m\}$ and the 
  singletons $\{i\}$ for $i \geq m+1$.
  The matroid $\mathcal M_L$ has no coloops for $m \geq 1$ and the unique 
  coloop $0$ if $m = 0$. Then \cref{prop:LinearDegreeFormula} shows $\deg 
  L^{\circ r} = r^{m-1}$ for $m \geq 1$, and $\deg L^{\circ r} = 1$ for $m = 
  0$. 
  For $m = 3$, $n = 3$ and $r=2$, 
  we obtain a quartic surface which we illustrate in   
  Figure~\ref{fig:QuarticSurfaceFromPlane}.
\end{ex}

\section{Hypersurfaces} \label{sec:Hypersurfaces}

In this section, we study the coordinate-wise powers of 
hypersurfaces. Here, by a hypersurface, we mean a pure codimension~1 variety. 
In particular, hypersurfaces are assumed to be reduced, but are allowed to have 
multiple irreducible components. We describe a way to find the explicit 
equation describing the image of the given hypersurface under the morphism 
$\varphi_r$. 
We define generalised power sum symmetric polynomials and we give a relation 
between duality and reciprocity of hypersurfaces defined by them. Finally, we 
raise the
question whether and how the explicit description of coordinate-wise powers of 
hypersurfaces may lead to results on the coordinate-wise powers for arbitrary 
varieties.

\bigskip
\subsection{The defining equation}

The defining equation of a degree~$d$ hypersurface is a square-free (i.e.\ 
reduced) polynomial unique up to scaling, corresponding to a unique 
$f \in \P(\Cx_d)$. We work with points in 
$\P(\Cx_d)$, i.e.\ polynomials up to scaling. We do not always make 
explicit which degree~$d$ we are talking about if it is irrelevant to the 
discussion.
The product of $f \in \P(\Cx_d)$ and $g \in \P(\Cx_{d'})$ is 
well-defined up to scaling, i.e.\ as an element $fg \in \P(\Cx_{d+d'})$. 
Equally, we talk about irreducible factors etc.\ of elements of 
$\P(\Cx_d)$.

Since the finite morphism $\varphi_r $ preserves 
dimensions, the coordinate-wise $r$-th power of a hypersurface is again a 
hypersurface, leading to the following definition.

\begin{definition} \label{def:definingEquation}
  Let $f\in \P(\Cx_d)$ be square-free and $V(f) \subset \P^n$ be the 
  corresponding hypersurface. We denote by $f^{\circ r} \in \P(\Cx_{d'})$ 
  the defining equation of the hypersurface $V(f)^{\circ r}$, i.e.\
    \[V(f^{\circ r}) = V(f)^{\circ r}.\]
\end{definition}

For a 
given square-free polynomial $f$, we want to compute $f^{\circ r}$. To this 
end, we introduce the 
following auxiliary notion.

\begin{definition} \label{def:sym}
	Let $f \in \P(\Cx_d)$ be square-free. We define $\sym_r(f) \in \P(\Cx_{d'})$ 
	as follows:
	\begin{enumerate}[(i)]
		\item If $f$ is irreducible and $f \neq x_i \ \forall i \in 
		\{0,1,\ldots,n\}$, then we define $\sym_r(f) \in \P(\Cx_{d'})$ to be the 
		product over the orbit $\G \cdot f \subset \P(\Cx_d)$. For 
		$f=x_i$, we define $\sym_r(f):=x_i^r.$
		\item If $f=f_1 f_2 \ldots f_m$ where $f_i \in \P(\Cx_d)$ are 
		irreducible, then we define 
      \[\sym_r(f):=\lcm\{\sym_r(f_1), \sym_r(f_2), \ldots, \sym_r(f_m)\}.\]
	\end{enumerate}
\end{definition}

Observe that in case~(ii), determining $\sym_r(f)=\lcm\{\sym_r(f_1), 
\sym_r(f_2), \ldots, \sym_r(f_m)\}$ is straightforward, assuming the 
decomposition of $f$ into irreducible factors $f_1, \ldots, f_m$ is known. 
Indeed, the irreducible factors of each $\sym_r(f_i)$ are immediate from 
case~(i) of the definition, so determining the least common multiple does not 
require any additional factorization.

\begin{lemma} \label{lem:symInvolvesOnlyPowers}
	Let $f\in \P(\Cx_d)$ be square-free. Then $\sym_r(f)\in 
	\P(\C[x_0^r,\ldots,x_n^r]_{d'})$, 
  and the principal ideal generated by $\sym_r(f)$ in the subring 
  $\C[x_0^r,\ldots,x_n^r] \subset \Cx$ is $(f) \cap \C[x_0^r,\ldots,x_n^r]$.
\end{lemma}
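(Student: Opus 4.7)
I would handle both claims by reducing along the structure of the definition: first prove them for irreducible $f$, then lift to the square-free case via $\lcm$. The lifting works because $\C[x_0^r,\ldots,x_n^r]$ is closed under $\lcm$ computed in the ambient UFD $\Cx$. Indeed, if each $\sym_r(f_i)$ is $\SC$-invariant, then its prime factorization in $\Cx$ consists of $\SC$-orbits of irreducibles with multiplicities constant along orbits, so the pointwise maximum of multiplicities also gives an $\SC$-invariant polynomial. The ideal equality for $f = f_1 \cdots f_m$ then follows from the irreducible case, using the pairwise coprimality of the $f_i$.

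For the irreducible case with $f \neq x_i$, I would argue geometrically. Choose representatives and set $\tilde p := \prod_{\tau \in \G \cdot [f]} (\tau \cdot f) \in \Cx_{d'}$. Its zero locus $\bigcup_\tau \tau \cdot V(f)$ is a $\G$-invariant hypersurface, so it descends under the geometric quotient $\varphi_r : \P^n \to \P^n$ to a hypersurface $Y' \subset \P^n$; let $q \in \C[y_0,\ldots,y_n]$ be its defining equation. Then $\varphi_r^*q = q(x_0^r,\ldots,x_n^r) \in \C[x_0^r,\ldots,x_n^r]$ has the same vanishing locus as $\tilde p$, so $\varphi_r^*q = c\,\tilde p^{\,m}$ for some $c \in \C^\times$ and $m \geq 1$. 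The key step, and my anticipated main obstacle, is showing $m=1$: the morphism $\varphi_r$ ramifies only along the coordinate hyperplanes $V(x_i)$, and the exclusion $f \neq x_i$ ensures that no irreducible factor of $\tilde p$ is a coordinate, so $\varphi_r$ is étale at the generic point of each component $V(\tau \cdot f)$, forcing $\varphi_r^*q$ to be reduced. The trivial case $f = x_i$ gives $\sym_r(x_i) = x_i^r \in \C[x_0^r,\ldots,x_n^r]$ directly.

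For the ideal equality, the inclusion $(\sym_r(f)) \subseteq (f) \cap \C[x_0^r,\ldots,x_n^r]$ reduces to showing $f \mid \sym_r(f)$, which is straightforward in each case of the definition. For the reverse inclusion, consider $g \in (f) \cap \C[x_0^r,\ldots,x_n^r]$; then $f \mid g$, and $\SC$-invariance of $g$ gives $\tau \cdot f \mid g$ for every $\tau \in \SC$. When $f \neq x_i$, the distinct orbit members are pairwise coprime irreducibles, so their product $\sym_r(f)$ divides $g$ in $\Cx$, and the quotient of two $\SC$-invariants is again $\SC$-invariant, placing it in $\C[x_0^r,\ldots,x_n^r]$. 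For $f = x_i$, writing $g = G(x_0^r,\ldots,x_n^r)$, the divisibility $x_i \mid g$ forces $y_i \mid G$ and hence $x_i^r \mid g$, completing the argument.
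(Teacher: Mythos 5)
Your proof is correct, but it takes a genuinely different route for the central step, namely showing that $\sym_r(f)$ lies in $\C[x_0^r,\ldots,x_n^r]$ when $f$ is irreducible and not a coordinate. The paper's argument is purely algebraic and elementary: since $\sym_r(f)\in\P(\Cx_{d'})$ is fixed by $\G$, any representative $g$ satisfies $\tau\cdot g=c_\tau g$ for scalars $c_\tau$; picking a monomial of $g$ not divisible by $x_i$ and comparing its coefficient on both sides forces $c_{\tau^{(i)}}=1$ for the generator $\tau^{(i)}$ of $\SC$ that rescales only $x_i$, and since these generate $\SC$ one gets genuine invariance. You instead argue geometrically: you pull back the reduced equation $q$ of $\varphi_r(V(f))$, observe that $\varphi_r^*q$ and $\tilde p=\prod_{\tau\in\G\cdot[f]}\tau\cdot f$ cut out the same locus, and then invoke that $\varphi_r$ is \'etale at the generic point of each component $V(\tau\cdot f)$ (because $f\neq x_i$) to conclude $\varphi_r^*q$ is reduced, hence equal to $c\tilde p$. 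Both arguments are sound; yours is more conceptual in that it explains the role of the hypothesis $f\neq x_i$ as avoiding the ramification divisor $V(x_0\cdots x_n)$, while the paper's is shorter and needs no geometric input. One small imprecision in your write-up: you assert $\varphi_r^*q=c\,\tilde p^m$ before the \'etaleness step, but ``same vanishing locus'' only gives $\varphi_r^*q=c\prod_\tau(\tau\cdot f)^{m_\tau}$ with a priori unequal exponents; either observe that $\SC$-invariance of $\varphi_r^*q$ forces the $m_\tau$ to agree, or (more simply) apply the \'etaleness argument directly to conclude every $m_\tau=1$ without the intermediate claim.

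On the remaining points you track the paper closely. The reduction to the irreducible case via $\lcm$ and the ideal equality matches the paper's calculation $(f)\cap\C[x_0^r,\ldots,x_n^r]=\bigcap_i\big((f_i)\cap\C[x_0^r,\ldots,x_n^r]\big)=\bigcap_i(\sym_r(f_i))=(\sym_r(f))$; your explicit remark that the $\lcm$ of $\SC$-invariants is again $\SC$-invariant, and that the quotient of two invariants is invariant (so that divisibility in $\Cx$ descends to the subring), fills in details the paper's proof leaves implicit, which is a welcome clarification.
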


\begin{proof}
  It is enough to show the claim for $f$ irreducible because we can deduce the 
  general case in the following manner. If $f$ factors into irreducible factors as $f = f_1 
  f_2 \ldots f_m$, then
  \begin{align*}
    (f) \cap \C[x_0^r,\ldots,x_n^r] 
    &= (f_1)\cap \ldots \cap (f_m) \cap \C[x_0^r,\ldots,x_n^r] 
    = \bigcap_{i=1}^m ((f_i)\cap \C[x_0^r,\ldots,x_n^r]) \\
    &= \bigcap_{i=1}^m \, (\sym_r(f_i))
    = (\lcm\{\sym_r(f_1), \sym_r(f_2), \ldots, \sym_r(f_m)\}) = (\sym_r(f)).
  \end{align*}
  
  We now assume that $f$ is irreducible. If $f = x_i$ for some $i \in 
  \{0,1,\ldots,n\}$, then the claim holds trivially by the definition of 
  $\sym_r(f)$. Let $f \neq x_i$ for all $i$ and $g$ be a polynomial 
  representing $\sym_r(f) \in \P(\Cx_{md})$. By definition, $\sym_r(f)$ is 
  fixed under the action of $\G$, hence $\tau \cdot g$ is a multiple of $g$ for 
  all $\tau \in \SC$. Since $g$ is not divisible by $x_i$, it must 
  contain a monomial not divisible by $x_i$. This shows that $g$ is fixed by
  $\tau^\supind{i} = (1, \ldots, 1, \zeta, 1, \ldots, 1) \in \SC$, 
  where the $i$-th position of $\tau^\supind{i}$ is a primitive $r$-th root of 
  unity.
  Since $\tau^\supind{0}, \tau^\supind{1}, \ldots, \tau^\supind{n}$ generate 
  the group $\SC$, we have $\tau \cdot g = g$ for all $\tau \in \SC$. 
  Hence, $g$ lies in the invariant ring $\Cx^{\SC} = 
  \C[x_0^r,\ldots,x_n^r]$, i.e.\ $\sym_r(f) \in 
  \P(\C[x_0^r,\ldots,x_n^r]_{d^{\prime}})$.
  
  If $h \in (f)$ is a polynomial in $\C[x_0^r,\ldots,x_n^r]$, then $h$ is 
  invariant under the action of $\SC$ on $\Cx$, so $h \in (\tau \cdot f)$ for 
  all $\tau \in \G$. By the definition of $\sym_r(f)$ and irreducibility of 
  $\tau \cdot f$, this shows $h \in \sym_r(f)$. We conclude $(f) \cap 
  \C[x_0^r,\ldots,x_n^r] = (\sym_r(f))$.
\end{proof}

Based on \cref{def:sym} and \cref{lem:symInvolvesOnlyPowers}, the following 
proposition gives a method to find the 
equation of the coordinate-wise power of a hypersurface.

\begin{prop}[Powers of hypersurfaces] \label{prop:hypersurfaces}
  Let $V(f) \subset \P^n$ be a hypersurface. The defining equation $f^{\circ 
  r}$ of its 
  coordinate-wise $r$-th power is given by replacing each occurrence of $x_i^r$ 
  in $\sym_r(f)$ by $x_i$ for all $i \in \{0,1,\ldots,n\}$.
\end{prop}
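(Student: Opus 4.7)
My plan is to compute the defining ideal of $V(f)^{\circ r}$ directly as the contraction of the ideal $(f)$ under the pullback ring map of $\varphi_r$, and then note that the resulting generator is automatically reduced, so that no separate square-freeness computation is needed.

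I write $\Cx_{\mathrm{tgt}} = \C[y_0, \ldots, y_n]$ for the coordinate ring of the target $\P^n$, so that the pullback is the injection $\varphi_r^* \colon \Cx_{\mathrm{tgt}} \to \Cx$, $y_i \mapsto x_i^r$, whose image is the subring $R = \C[x_0^r,\ldots,x_n^r] \subset \Cx$. Since $\varphi_r$ is a finite, surjective morphism (hence closed), $V(f)^{\circ r} = \varphi_r(V(f))$ coincides with its scheme-theoretic image, so its defining ideal is
\[ I(V(f)^{\circ r}) \;=\; (\varphi_r^*)^{-1}((f)) \;=\; (\varphi_r^*)^{-1}\bigl((f) \cap R\bigr), \]
where the second equality uses that $\varphi_r^*$ already lands in $R$.

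At this point \cref{lem:symInvolvesOnlyPowers} does the bulk of the work, since it identifies $(f) \cap R = \sym_r(f) \cdot R$. Transporting this principal ideal across the isomorphism $\varphi_r^* \colon \Cx_{\mathrm{tgt}} \xrightarrow{\sim} R$ yields a principal ideal in $\Cx_{\mathrm{tgt}}$ generated by the polynomial $\tilde g$ with $\varphi_r^*(\tilde g) = \sym_r(f)$ --- concretely, $\tilde g$ is obtained from $\sym_r(f)$ by replacing each occurrence of $x_i^r$ by $x_i$ (and renaming the target variables back to $x_0, \ldots, x_n$). This is the candidate for $f^{\circ r}$.

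To finish via \cref{def:definingEquation} I need $\tilde g$ to be square-free, and one might expect this to be the main obstacle. However, it comes for free: since $f$ is square-free, $(f) \subset \Cx$ is radical, preimages of radical ideals under ring homomorphisms are radical, and a principal ideal in a polynomial ring is radical precisely when its generator is square-free. Thus the real content of the proposition is already packaged inside \cref{lem:symInvolvesOnlyPowers}; once one believes that $\sym_r(f)$ generates the contracted ideal $(f) \cap R$, the rest of the argument is careful bookkeeping between the source and target copies of $\Cx$.
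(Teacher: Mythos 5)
Your proposal is correct and follows essentially the same route as the paper: both compute the defining ideal of $V(f)^{\circ r}$ as the contraction of $(f)$ under the pullback $x_i \mapsto x_i^r$ and then invoke \cref{lem:symInvolvesOnlyPowers}. The explicit radicality argument you give for why the resulting generator is automatically square-free is a welcome extra — the paper leaves this implicit even though \cref{def:definingEquation} requires $f^{\circ r}$ to be reduced.
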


\begin{proof}
  Since $V(f)^{\circ r} \subset \P^n$ is the image of $V(f)$ under $\varphi_r 
  \colon \P^n \to \P^n$, its ideal $(f^{\circ r}) \subset \Cx$ is the preimage 
  under the ring homomorphism $\psi \colon \Cx \to \Cx$, $x_i \mapsto x_i^r$ of 
  the ideal $(f) \subset \Cx$. The claim is therefore an immediate consequence 
  of \cref{lem:symInvolvesOnlyPowers}.
\end{proof}

For clarity, we illustrate the above results for a hyperplane in $\P^3$.

\begin{ex} \label{ex:quarticSingularSurface}
  For $n=3$ and $f := x_0+x_1+x_2+x_3 \in \P(\Cx_1)$, we have
    \begin{align*}
      \begin{multlined} \sym_2(f) = (x_0+x_1+x_2+x_3) (x_0+x_1+x_2-x_3) 
      (x_0+x_1-x_2+x_3) (x_0+x_1-x_2-x_3) \\ (x_0-x_1+x_2+x_3) 
      (x_0-x_1+x_2-x_3) (x_0-x_1-x_2+x_3) (x_0-x_1-x_2-x_3). \end{multlined}
    \end{align*}
  Expanding this expression, we obtain a polynomial in 
  $\C[x_0^2,x_1^2,x_2^2,x_3^2]$ and, substituting $x_i^2$ by $x_i$, we obtain
  by Proposition~\ref{prop:hypersurfaces} that the coordinate-wise 
  square $V(f)^{\circ 2} \subset \P^3$ is the vanishing set of
  \begingroup \scriptsize
    \begin{align*}
        f^{\circ 2} &= x_0^4 - 4x_0^3x_1 + 6x_0^2x_1^2 - 4x_0x_1^3 + x_1^4 - 
        4x_0^3x_2 + 4x_0^2x_1x_2 + 4x_0x_1^2x_2 - 4x_1^3x_2 + 6x_0^2x_2^2 + 
        4x_0x_1x_2^2 + 6x_1^2x_2^2 \\
        &- 4x_0x_2^3 - 4x_1x_2^3 + x_2^4 - 4x_0^3x_3 + 4x_0^2x_1x_3 + 
        4x_0x_1^2x_3 - 4x_1^3x_3 + 4x_0^2x_2x_3 - 40x_0x_1x_2x_3 + 4x_1^2x_2x_3 
        + 4x_0x_2^2x_3 \\ 
        &+ 4x_1x_2^2x_3 - 4x_2^3x_3 + 6x_0^2x_3^2 + 4x_0x_1x_3^2 + 6x_1^2x_3^2 
        + 
        4x_0x_2x_3^2 + 4x_1x_2x_3^2 + 6x_2^2x_3^2 - 4x_0x_3^3 - 4x_1x_3^3 - 
        4x_2x_3^3 + x_3^4.
    \end{align*}
  \endgroup
  \begin{figure}[h]
    \includegraphics[width=0.195\textwidth]{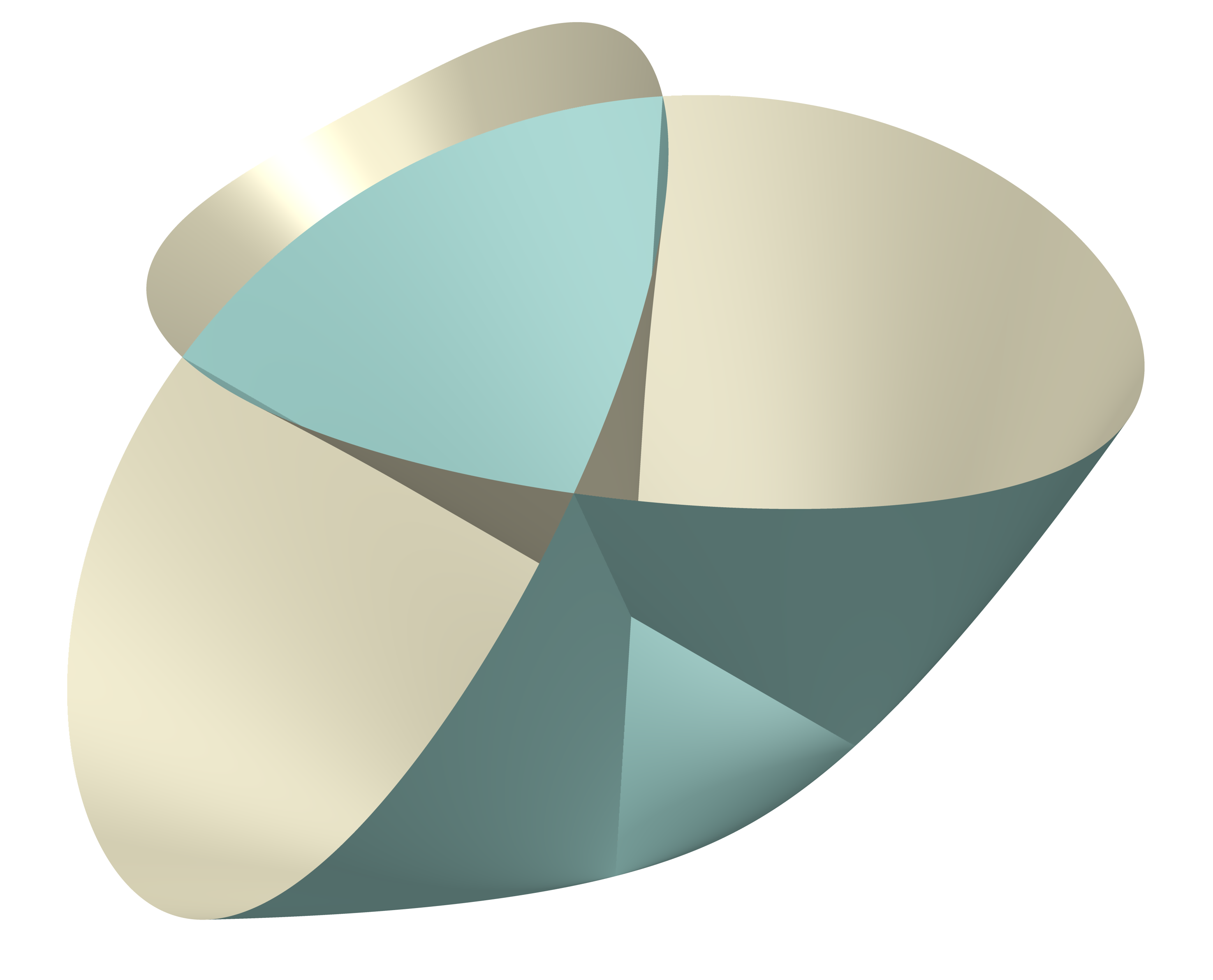}
    \caption{The coordinate-wise square of the plane $V(x_0+x_1+x_2+x_3) 
      \subset \P^3$.}
    \label{fig:QuarticSurfaceFromPlane}
  \end{figure}

  This rational quartic surface is illustrated in 
  Figure~\ref{fig:QuarticSurfaceFromPlane}. It is a Steiner surface with 
  three singular lines forming the ramification locus of 
  $\restr{\varphi_2}{V(f)} \colon V(f) \to V(f)^{\circ 2}$.
\end{ex}

\begin{ex}[Squaring the circle] \label{ex:SquaringTheCircle}
  Consider the plane conic $C = V(f) \subset \P^2$ given by
    $f := (x_1-ax_0)^2+(x_2-bx_0)^2-(c x_0)^2$
  for some $a,b,c \in \R$ with $c > 0$. In the affine chart $x_0=1$, this 
  corresponds over the real numbers to the circle with center $(a,b)$ and 
  radius $c$. From \cref{prop:hypersurfaces}, we show that the 
  coordinate-wise square of the circle $C \subset \P^2$ can be a line, a 
  parabola or a singular quartic curve. See Figure~\ref{fig:SquaringTheCircle} 
  for an illustration of the following three cases:
  \begin{enumerate}[(i)]
    \item If the circle $C$ is centered at the origin (i.e.\ $a=b=0$), then 
    $\sym_r(f) = f$ and
    $C^{\circ 2} \subset \P^2$ is the line defined by the equation
            $f^{\circ 2} = x_1+x_2-c^2x_0.$
    \item If the center of the circle lies on a coordinate-axis and is not the 
    origin (i.e.\ $ab=0$, but $(a,b) \neq (0,0)$), then $C^{\circ 2} \subset 
    \P^2$ is a conic. Say $a=0$, then $C^{\circ 2}$ 
    is defined by the equation
    $f^{\circ 2} = (x_1+x_2)^2 + 2(b^2-c^2)x_0x_1-2(b^2+c^2)x_0x_2 + 
    (b^2-c^2)^2x_0^2.$ In the affine chart $x_0 = 1$, $C$ is a circle and
    $C^{\circ 2}$ is a parabola.
    \item If the center of the circle does not lie on a coordinate-axis, then  
    $|\G \cdot f| = 4$. Therefore, $C^{\circ 2}$ is a quartic plane curve. Its
    equation can be
    computed explicitly using \cref{prop:hypersurfaces}.
    Being the image of a 
    conic, the quartic curve $C^{\circ 2}$ is rational, hence it cannot be 
    smooth. In fact, 
    its singularities are the two points $[0:1:-1]$ and
    $[a^2+b^2:b^2(c^2-a^2-b^2):a^2(c^2-a^2-b^2)]$ in $\P^2$. 
    They form the branch 
    locus of $\restr{\varphi_2}{C} \colon C \to C^{\circ 2}$. The point 
    $[0:1:-1] \in \P^2$ is the image of the two complex points $[0:1:\pm i]$ at 
    infinity lying on all of the four conics $\tau \cdot C$ for $\tau \in 
    \G[2]$. The other singular point of $C^{\circ 2}$ is the image under 
    $\varphi_2$ of the two intersection points of the two circles $C$ and $\tau \cdot C$ for $\tau = [1:-1:-1] \in \G[2]$ inside the affine chart $x_0=1.$ 
  \end{enumerate}
  \begin{figure}[h]
    \begin{center}
      ${\vimage{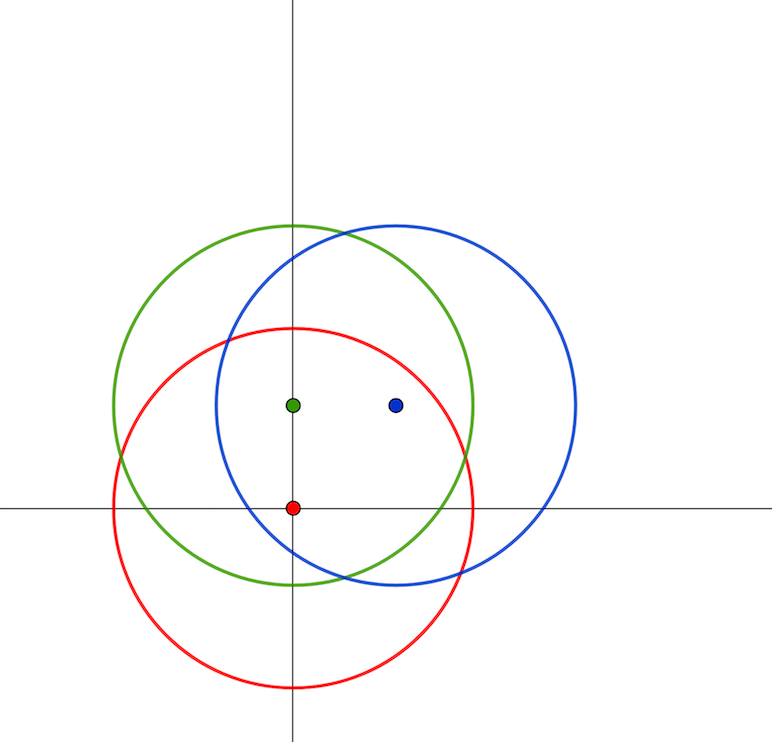}}\qquad
      \longrightarrow 
      \qquad
      {\vimage{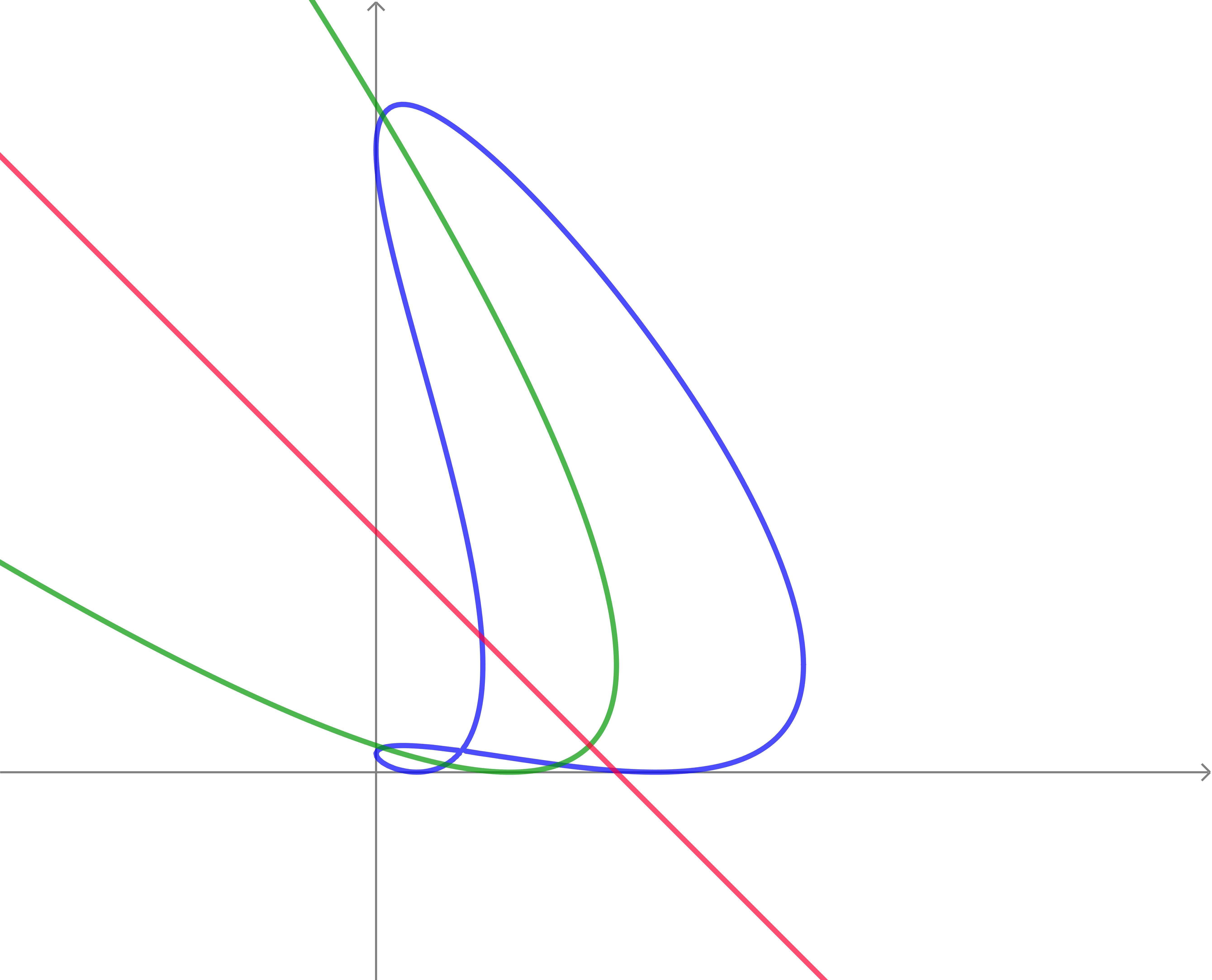}}$
      \caption{Circles and their coordinate-wise squares}
      \label{fig:SquaringTheCircle}
    \end{center}
  \end{figure}
\end{ex}

\begin{remark}[Newton polytope of $f^{\circ r}$]
  Let $f \in \P(\Cx_d)$ be irreducible and $f \neq x_i$. Then the Newton 
  polytope of $f^{\circ 
  r} $ arises from the Newton polytope of $f$ by rescaling 
  according to the cardinality of the orbit $\G \cdot f \subset \P(\Cx_d)$:
    \[\Newt(f^{\circ r}) = \frac{|\G \cdot f|}{r} \cdot \Newt(f) \subset 
    \R^{n+1}.\]
  Indeed, we have $\Newt(\tau \cdot f) = \Newt(f)$ for all $\tau \in \G$, and 
  since $\Newt(g h) = \Newt(g) + \Newt(h)$ holds for all polynomials $g,h$, we 
  have $\Newt(\sym_r(f)) = |\G \cdot f| \cdot \Newt(f)$ by 
  \cref{def:sym}. Replacing $x_i^r$ by $x_i$ rescales the Newton polytope with 
  the factor $\frac{1}{r}$, so the claim follows.
\end{remark}

\bigskip
\subsection{Duals and reciprocals of power sum hypersurfaces}

We now highlight the interactions between coordinate-wise powers, dual 
and reciprocal varieties for the case of \defstyle{power sum hypersurfaces} 
$V(x_0^p+\ldots+x_n^p) \subset \P^n$. Specifically, we determine explicitly 
all hypersurfaces that arise from power sum hypersurfaces by repeatedly taking 
duals and reciprocals as the coordinate-wise $r$-th power of some hypersurface. In this subsection, we also allow $r$ to take negative 
integer values.

Recall that the \defstyle{reciprocal variety} $V(f)^{\circ (-1)}$ of a 
hypersurface $V(f) \subset \P^n$ not containing any coordinate hyperplane of 
$\P^n$ is defined as the closure of 
$\varphi_{-1}(V(f)\setminus V(x_0 x_1 \ldots x_n))$ in $\P^n$. We denote it 
also by $\reci V(f)$. For linear spaces the reciprocal variety and its Chow 
form has been studied in detail in \cite{KV}.
  
We also recall the definition of the \defstyle{dual variety} of $V(f) 
\subset \P^n$. Consider the set of hyperplanes in $\P^n$ that arise as the 
projective tangent space at a smooth point of $V(f)$. This is a subset of the 
dual projective space $(\P^n)^*$ and its Zariski closure is the 
\defstyle{dual variety} of $V(f)$, which we denote by $V(f)^*$ or $\dual 
V(f)$. We identify $(\C^{n+1})^*$ with $\C^{n+1}$ via 
the standard bilinear form and therefore identify $(\P^n)^*$ with $\P^n$.

Consider the \defstyle{power sum polynomial} $\powsum{p} := 
x_0^p+\ldots+x_n^p \in \P(\Cx_p)$ for $p \in \N$. As before, we regard 
polynomials only up to scaling. For power sums with 
negative exponents we consider the numerator of the rational 
function as
  \[\powsum{-p} := (x_1 x_2 x_3 \ldots x_n)^p+(x_0 x_2 x_3 \ldots 
  x_n)^p+\ldots+(x_0 x_1 x_2 \ldots x_{n-1})^p \in \P(\Cx_{np}) \quad 
  \text{for } p \in \N.\]
In particular, $\powsum{-1} \in \P(\Cx_n)$ is the elementary symmetric 
polynomial of degree $n$. 

Recall that the morphism $\varphi_r \colon \P^n \to \P^n$ for $r > 0$ is 
finite, hence preserves dimension. Since $\varphi_{-1} \colon \P^n 
\dashrightarrow \P^n$ is a birational map, the rational map 
$\varphi_{-r} = \varphi_{-1} \circ \varphi_r$ also preserves dimensions: $\dim 
V(\powsum{p})^{\circ (-r)} = \dim V(\powsum{p})$. We therefore extend
\cref{def:definingEquation} to include the defining equation of 
$V(\powsum{p})^{\circ r}$ by $\powsum{p}^{\circ r}$ for all $p,r \in \Z 
\setminus \{0\}$. For the constant polynomial $\powsum{0} = 1 \in \P(\Cx_0)
$, we define $\powsum{0}^{\circ r} := 1 $ for all $r 
\in \Z \setminus \{0\}$.


\begin{lemma} \label{lem:powSumRescaling}
  For all $s \in \Z$ and $r, \lambda \in \Z \setminus \{0\}$, we have 
  $\powsum{\lambda s}^{\circ (\lambda r)} = \powsum{s}^{\circ r}$.
\end{lemma}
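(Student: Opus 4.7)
The plan is to work on the dense open torus $T := \P^n \setminus V(x_0 x_1 \cdots x_n)$, where negative exponents can be treated uniformly with positive ones. The crucial observation is that, for every integer $p \neq 0$, the intersection $V(\powsum{p}) \cap T$ equals the solution set of the Laurent polynomial equation $\sum_{i=0}^n x_i^p = 0$: this is immediate for $p > 0$, and for $p < 0$ it holds because $\powsum{p}$ was defined by clearing denominators in $\sum_i x_i^p$ by factors $x_j$ that are invertible on $T$. Substituting $x_i \mapsto x_i^\lambda$ in this Laurent equation then yields
\[ V(\powsum{\lambda s}) \cap T \;=\; \varphi_\lambda^{-1}\bigl(V(\powsum{s}) \cap T\bigr). \]

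Next, $\varphi_\lambda \colon T \to T$ is a surjective group endomorphism of the torus $T \cong (\C^*)^n$ for every $\lambda \in \Z \setminus \{0\}$, so applying $\varphi_\lambda$ to both sides of the preimage identity and using $\varphi_{\lambda r} = \varphi_r \circ \varphi_\lambda$ on $T$ gives
\[ \varphi_{\lambda r}\bigl(V(\powsum{\lambda s}) \cap T\bigr) \;=\; \varphi_r\bigl(V(\powsum{s}) \cap T\bigr). \]
I would then take Zariski closures in $\P^n$: for $r > 0$ the closure on the right is $\varphi_r(V(\powsum{s})) = V(\powsum{s})^{\circ r}$ because $V(\powsum{s}) \cap T$ is dense in the irreducible hypersurface $V(\powsum{s})$ and $\varphi_r$ is a finite morphism, while for $r < 0$ it equals $V(\powsum{s})^{\circ r}$ directly by the definition in the paper. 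The same reasoning applied with $\lambda r$ in place of $r$ on the left yields the variety equality $V(\powsum{\lambda s})^{\circ \lambda r} = V(\powsum{s})^{\circ r}$, and since the defining equation of a hypersurface is unique in $\P(\Cx_d)$ the desired polynomial identity follows. The case $s = 0$ is trivial by the convention $\powsum{0}^{\circ r} = 1$.

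The main obstacle I anticipate is the careful bookkeeping of the closure step across the four sign-combinations of $(\lambda, r)$, together with confirming that $V(\powsum{p})$ is irreducible (so that $V(\powsum{p}) \cap T$ is dense in $V(\powsum{p})$) for every $p \in \Z \setminus \{0\}$. Irreducibility for $p > 0$ is classical (the Fermat hypersurface), and for $p < 0$ it follows because $V(\powsum{-p})$ agrees generically with the reciprocal image of the irreducible variety $V(\powsum{p})$.
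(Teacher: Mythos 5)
Your argument is correct, and it takes a mildly different route from the paper's. The paper first handles $\lambda > 0$ directly from the pullback identity $\varphi_\lambda^{-1}(V(\powsum{s})) = V(\powsum{\lambda s})$ and surjectivity of $\varphi_\lambda$, and then reduces $\lambda < 0$ to the positive case by inserting $\reci$: writing $V(\powsum{\lambda s})^{\circ (\lambda r)} = (V(\powsum{\lambda s})^{\circ(-\lambda)})^{\circ(-r)} = V(\powsum{-s})^{\circ(-r)}$ and using that $\reci V(\powsum{-s}) = V(\powsum{s})$. You instead pass entirely to the torus $T$, where $\varphi_\lambda$ is a genuine surjective endomorphism for every nonzero $\lambda$, derive the single preimage identity $V(\powsum{\lambda s}) \cap T = \varphi_\lambda^{-1}\bigl(V(\powsum{s}) \cap T\bigr)$ from the Laurent equation $\sum_i x_i^p = 0$, and close up. This treats all four sign-combinations of $(\lambda, r)$ at once rather than in two batches, at the cost of having to verify the closure bookkeeping you flag; the paper's factoring through $\reci$ avoids that bookkeeping by reusing the already-defined $(\cdot)^{\circ r}$ operation at each step. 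Both are sound and rest on the same core computation, namely that substituting $x_i \mapsto x_i^\lambda$ sends the exponent-$s$ power-sum equation to the exponent-$\lambda s$ one.

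One quibble: you invoke irreducibility of $V(\powsum{p})$ to get that $V(\powsum{p})\cap T$ is dense in $V(\powsum{p})$, but that is not quite right and also more than you need. The Fermat hypersurface $\sum_i x_i^p = 0$ is reducible when $n = 1$ (it is a union of $p$ points), and for $p < 0$ your irreducibility argument is circular, since it already presumes the relation between $\powsum{-p}$ and the reciprocal of $V(\powsum{p})$ that you are in the middle of proving. What you actually need for the closure step is only that no irreducible component of $V(\powsum{p})$ lies in a coordinate hyperplane; this is immediate from the defining formulas, since $\powsum{p}$ restricted to $V(x_j)$ is visibly nonzero for every $j$ and every $p \in \Z \setminus \{0\}$. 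Replacing the irreducibility claim by this elementary observation makes the density step airtight and removes the circularity.
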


\begin{proof}
  For $\lambda > 0$, we have $\varphi_{\lambda}^{-1} (V(\powsum{s})) = 
  V(\powsum{\lambda s})$, hence
    \[V(\powsum{\lambda s}^{\circ (\lambda r)}) = \varphi_r(\varphi_\lambda(
    V(\powsum{\lambda s}))) = \varphi_r (V(\powsum{s})) = V(\powsum{s}^{\circ 
    r}),\]
  where we have used the surjectivity of $\varphi_\lambda \colon \P^n \to \P^n$.
  For $\lambda < 0$, we use the above to see
    \[V(\powsum{\lambda s}^{\circ (\lambda r)}) = 
    (V(\powsum{\lambda s})^{\circ (-\lambda)})^{\circ (-r)} = 
    V(\powsum{-s})^{\circ (-r)} =
    (\reci V(\powsum{-s}))^{\circ r}.\]
  The reciprocal variety of $V(\powsum{-s})$ is $V(\powsum{s})$ for all $s \in 
  \Z$. Hence, $V(\powsum{\lambda s}^{\circ (\lambda r)}) = V(\powsum{s})^{\circ 
  r}$.
\end{proof}

This naturally leads us to the our next definition.

\begin{definition}[Generalised power sum polynomial]
  For any rational number $p = \frac{s}{r} \in \Q$ ($r,s \in \Z$, $r \neq 
  0$), we define the \defstyle{generalised power sum polynomial} 
  $\powsum{p} := \powsum{s}^{\circ r} \in \P(\Cx_d)$.
\end{definition}

By Lemma~\ref{lem:powSumRescaling}, the generalised power sum polynomial 
$\powsum{p}$ is 
well-defined. With this definition, we get the following duality result for 
hypersurfaces generalising Example~4.16 in \cite{GKZ}.

\begin{prop}[Duality of generalised power sum hypersurfaces]  
\label{thm:dualPowSum}
  Let $p, q \in \Q \setminus \{0\}$ be such that $\frac{1}{p}+\frac{1}{q} = 1$. 
  Then 
    $V(\powsum{p})^* = V(\powsum{q})$.
\end{prop}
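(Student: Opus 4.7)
My plan is to compute the Gauss map of $V(\powsum{p})$ in closed form by exploiting the parametrization $\varphi_r \colon V(\powsum{s}) \to V(\powsum{p})$ arising from any representation $p = s/r$, and then to recognize its image as $V(\powsum{q})$. As a first step, I fix integers $r,s$ with $r > 0$ and $p = s/r$. The hypothesis $\frac{1}{p} + \frac{1}{q} = 1$ excludes $p,q \in \{0,1\}$, so $s \notin \{0,r\}$. A direct computation gives $q = s/(s-r)$, and by the definition of the generalized power sum polynomial together with \cref{lem:powSumRescaling}, we have $\powsum{p} = \powsum{s}^{\circ r}$ and $\powsum{q} = \powsum{s}^{\circ (s-r)}$.

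The heart of the argument is a chain-rule computation on the torus. Pick a generic point $x \in V(\powsum{s}) \cap (\C^*)^{n+1}$; there $\varphi_r$ is étale and the hypersurface $V(\powsum{s})$ is cut out, up to a Laurent monomial unit (needed only when $s < 0$), by $\sum_i x_i^s$. Hence the tangent space $T_x V(\powsum{s})$ is the hyperplane $\sum_i x_i^{s-1}\, dx_i = 0$. Substituting $dx_i = dy_i / (r x_i^{r-1})$, which comes from $y := \varphi_r(x)$, the image tangent space $T_y V(\powsum{p})$ is cut out by
\[ \sum_{i=0}^n x_i^{s-r}\, dy_i = 0. \]
Under the identification $(\P^n)^* \cong \P^n$, the dual point associated with this tangent hyperplane is $[x_0^{s-r} : \ldots : x_n^{s-r}] = \varphi_{s-r}(x)$.

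To conclude, $\varphi_{s-r}(x) \in \varphi_{s-r}(V(\powsum{s})) = V(\powsum{s}^{\circ(s-r)}) = V(\powsum{q})$, so the Gauss image is contained in $V(\powsum{q})$; conversely, every general point of $V(\powsum{q})$ is of the form $\varphi_{s-r}(x)$ for some $x$ in the torus part of $V(\powsum{s})$ and is thereby realized as the Gauss image at $\varphi_r(x)$. Taking Zariski closures gives $V(\powsum{p})^* = V(\powsum{q})$, since both sides are irreducible hypersurfaces (each being the closure of the image of the irreducible variety $V(\powsum{s})$ under a suitable map). The main technical nuisance I anticipate is the bookkeeping when $s < 0$ or $s - r < 0$: the symbols $\powsum{s}$ and $\varphi_{s-r}$ must then be interpreted via the reciprocal convention, and one has to verify the chain-rule identity $\sum_i x_i^{s-r}\, dy_i = 0$ as an equality in the Laurent polynomial ring $\C[x_0^{\pm 1}, \ldots, x_n^{\pm 1}]$ before clearing denominators at the very end.
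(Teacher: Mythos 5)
Your proposal is correct and follows essentially the same route as the paper's proof: both compute the Gauss map of $V(\powsum{p})$ by pulling back along $\varphi_r$ via a chain-rule computation on the torus, arrive at the map $x \mapsto \varphi_{s-r}(x)$ on $V(\powsum{s})$, and identify its image with $V(\powsum{q})$. The only difference is normalization of the representation $p = s/r$: you fix $r>0$ (so $s$ may be negative, forcing the reciprocal convention for $\powsum{s}$), whereas the paper fixes $s>0$ (allowing $r<0$), which sidesteps exactly the Laurent-ring bookkeeping you flag at the end.
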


\begin{proof}
  Write $p = \frac{s}{r}$ with $r \in \Z \setminus \{0\}$, $s \in 
  \Z_{>0}$. Let $b \in V(\powsum{p}) = \varphi_r(V(\powsum{s}))$ be a smooth 
  point of $V(\powsum{p}) \setminus V(x_0 x_1 \ldots x_n)$, and let $a \in 
  V(\powsum{s}) \setminus V(x_0 x_1 \ldots x_n)$ be such that $b = 
  \varphi_r(a)$. The morphism $\varphi_r \colon \P^n \setminus V(x_0 x_1 \ldots 
  x_n) \to \P^n \setminus V(x_0 x_1 \ldots x_n)$ induces a linear isomorphism 
  on projective tangent spaces $\mathbb{T}_a \P^n = \P^n \to \P^n = 
  \mathbb{T}_{b} \P^n$ given by $\diag(ra_0^{r-1}, ra_1^{r-1},\ldots, 
  ra_n^{r-1})$. This maps
  \[\mathbb{T}_a V(\powsum{s}) = V\left(\sum_{i=0}^n (\partial_i 
  \powsum{s})(a) \: x_i\right) \subset \P^n \quad \text{onto} \quad
    \mathbb{T}_b V(\powsum{p}) = V\left(\sum_{i=0}^n \frac{(\partial_i 
      \powsum{s})(a)}{ra_i^{r-1}} \: x_i\right) \subset \P^n.\]
  In particular, $V(\powsum{p})^* \subset \P^n$ is the image of the rational map
    \[V(\powsum{s}) \dashrightarrow \P^n, \qquad x \mapsto 
    \left[\frac{\partial_0 
    \powsum{s}}{rx_0^{r-1}}: \frac{\partial_1 
  \powsum{s}}{rx_1^{r-1}}:\ldots:\frac{\partial_n 
  \powsum{s}}{rx_n^{r-1}}\right].\]
  From $\partial_i \powsum{s} = s x_i^{s-1}$ we conclude that 
  $V(\powsum{p})^* = \varphi_{s-r}(V(\powsum{s})) = V(\powsum{s/(s-r)}) = 
  V(\powsum{q})$.
\end{proof}

\begin{remark}
  This statement can be understood as an algebraic analogue of the duality theory for $\ell^p$-spaces $(\R^n, |\cdot|_p)$. Indeed, let $p,q \geq 1$ be rational with $\frac{1}{p}+\frac{1}{q} = 1$. The unit ball in $(\R^n,|\cdot|_p)$ is $U_p := \{v \in \R^n \mid \sum_{i} v_i^p = 1\}$ and, by $\ell_p$-duality, hyperplanes tangent to $U_p$ correspond to the points on the unit ball $U_q$ of the dual normed vector space $(\R^n, |\cdot|_q)$. The complex projective analogues of the unit balls $U_p \subset \R^n$ are the generalised power sum hypersurfaces $V(\powsum{p}) \subset \P^n$ and \cref{thm:dualPowSum} shows the previous statement in this setting.
\end{remark}

Using \cref{prop:hypersurfaces} we can compute $\powsum{p}$ for any $p 
\in \Q$ explicitly. In particular, we make the following observation:

\begin{lemma} \label{lem:genPowSumSubst}
  Let $s \in \N$ and $r \in \Z \setminus \{0\}$ be relatively prime. Then $\powsum{s/r}$ arises 
  from $\powsum{1/r}$ by substituting $x_i \mapsto x_i^s$ for all $i \in 
  \{0,1,\ldots,n\}$.
\end{lemma}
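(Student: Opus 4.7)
The plan is to lift the claim to the level of the auxiliary polynomial $\sym_r$ from Definition~\ref{def:sym}, where the substitution $x_i \mapsto x_i^s$ interacts transparently with the $\G$-action, and then to descend back to $\powsum{}$ via Proposition~\ref{prop:hypersurfaces}.

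First, I would exploit the hypothesis $\gcd(s, r) = 1$: the map $\xi \mapsto \xi^s$ is a bijection on $\Z_r$ and therefore induces a bijection on the quotient $\G = \SC/\Z_r$. For any $\tau = (\xi_0, \ldots, \xi_n) \in \G$, a direct computation yields
\[
\tau \cdot \powsum{s}(x_0, \ldots, x_n) \;=\; \sum_{i=0}^n \xi_i^s \, x_i^s \;=\; \bigl(\tau^s \cdot \powsum{1}\bigr)(x_0^s, \ldots, x_n^s),
\]
where $\tau^s := (\xi_0^s, \ldots, \xi_n^s)$. Bijectivity of $\tau \mapsto \tau^s$ on $\G$ thus places the $\G$-orbit of $\powsum{s}$ in $\P(\Cx_s)$ in bijection with the $\G$-orbit of $\powsum{1}$ in $\P(\Cx_1)$, with corresponding elements related by the substitution $x_i \mapsto x_i^s$.

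Next, I would promote this orbit correspondence to the polynomial identity
\[
\sym_r(\powsum{s})(x_0, \ldots, x_n) \;=\; \sym_r(\powsum{1})(x_0^s, \ldots, x_n^s)
\]
up to a nonzero scalar in $\P(\Cx_{sr^n})$. When $n \geq 2$ the Fermat polynomial $\powsum{s}$ is irreducible, so this is immediate from Definition~\ref{def:sym}(i) as a product over the orbit. To cover the degenerate case $n = 1$, where $\powsum{s}$ factors into $s$ distinct linear forms, I would instead appeal to Lemma~\ref{lem:symInvolvesOnlyPowers}: the right-hand side manifestly lies in $(\powsum{s}) \cap \C[x_0^r, \ldots, x_n^r]$, hence is a scalar multiple of $\sym_r(\powsum{s})$, and since both sides are homogeneous of degree $sr^n$, the scalar is nonzero.

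Finally, writing $\sym_r(\powsum{1}) = F(x_0^r, \ldots, x_n^r)$ as granted by Lemma~\ref{lem:symInvolvesOnlyPowers}, the identity above becomes $\sym_r(\powsum{s})(x) = F((x_0^r)^s, \ldots, (x_n^r)^s)$. Applying the substitution $x_i^r \mapsto x_i$ prescribed by Proposition~\ref{prop:hypersurfaces} to both sides produces
\[
\powsum{s/r}(x_0, \ldots, x_n) \;=\; F(x_0^s, \ldots, x_n^s) \;=\; \powsum{1/r}(x_0^s, \ldots, x_n^s),
\]
which is exactly the claim. The main obstacle beyond careful bookkeeping of scalar ambiguities in $\P(\Cx_d)$ is the reducible case $n = 1$, and Lemma~\ref{lem:symInvolvesOnlyPowers} is tailor-made to handle this uniformly.
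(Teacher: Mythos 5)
Your proof is correct for $r > 0$ and takes essentially the same route as the paper, whose one-line proof simply refers to the explicit construction of Proposition~\ref{prop:hypersurfaces}; you are spelling out what that reference entails, namely that the bijection $\tau \mapsto \tau^s$ on $\G$ (a consequence of $\gcd(s,r)=1$) transports the $\G$-orbit of $\powsum{1}$ onto the $\G$-orbit of $\powsum{s}$ compatibly with $x_i \mapsto x_i^s$, whence $\sym_r(\powsum{s}) = \sym_r(\powsum{1})(x_0^s,\ldots,x_n^s)$, and the claim follows after replacing $x_i^r \mapsto x_i$.

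Two points are left tacit. First, both your argument and the paper's reference cover only $r>0$, since $\G$ and $\sym_r$ are only defined there, while the lemma as stated allows $r<0$; the missing case reduces to the positive one via Lemma~\ref{lem:powSumRescaling} together with the identity $\powsum{-s} = \powsum{-1}(x_0^s,\ldots,x_n^s)$, re-running your orbit argument with $\powsum{-1}$ and $\powsum{-s}$ in place of $\powsum{1}$ and $\powsum{s}$. Second, in the $n=1$ branch you assert without justification that $\deg \sym_r(\powsum{s}) = s r^n$; this is true but requires observing that $\gcd(s,r)=1$ forces the groups of $s$-th and $r$-th roots of unity to meet only in the identity, so the $s$ distinct linear factors of $\powsum{s}$ fall into $s$ distinct $\G$-orbits and the $\lcm$ in Definition~\ref{def:sym}(ii) is a genuine product of $s$ pairwise coprime polynomials of degree $r$. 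Neither point undermines the approach; both are short fill-ins.
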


\begin{proof}
  This follows from the explicit description of the polynomials $\powsum{s/r} = 
  \powsum{s}^{\circ r}$ and $\powsum{1/r} = \powsum{1}^{\circ r}$ given by 
  \cref{prop:hypersurfaces}.
\end{proof}

By Lemma~\ref{lem:genPowSumSubst}, in order to determine the generalised power 
sum polynomials $\powsum{p}$, we may restrict our attention to $\powsum{1/r}$. 
These have a particular geometric interpretation as repeated dual-reciprocals 
of the linear space $V(x_0+x_1+\ldots+x_n) \subset \P^n$ as in \cref{repeatedDR}.

\begin{thm} \label{cor:dualReciprocals}
The repeated dual-reciprocals of generalised power sum 
  hypersurfaces $V(\powsum{p})$ are given by
  \begin{align*}
    (\dual \reci)^k \: V(\powsum{p}) &= V(\powsum{p/(1+kp)}) \qquad \forall k 
    \in \N, \; p \in \Q \setminus \{0, -\frac{1}{k}, -\frac{1}{k-1},\ldots,-1\}
    \quad \text{and} \\
    (\reci \dual)^k \: V(\powsum{p}) &= V(\powsum{p/(1-kp)}) \qquad \forall k 
    \in \N, \; p \in \Q \setminus \{0, \frac{1}{k}, \frac{1}{k-1}, \ldots, 1\}.
  \end{align*}
\end{thm}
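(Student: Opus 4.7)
The plan is to reduce the theorem to two single-step base identities for $\dual$ and $\reci$ applied to a generalised power sum hypersurface, then iterate by induction on $k$.

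First I would establish the identity $\reci V(\powsum{p}) = V(\powsum{-p})$ for every $p \in \Q \setminus \{0\}$. Writing $p = s/r$ with $r \in \Z \setminus \{0\}$, $s \in \Z \setminus \{0\}$, this follows from the equality of rational maps $\varphi_{-1} \circ \varphi_r = \varphi_{-r}$ together with the definitions:
\[\reci V(\powsum{p}) = \left(V(\powsum{s})^{\circ r}\right)^{\circ (-1)} = V(\powsum{s})^{\circ (-r)} = V(\powsum{-s/r}) = V(\powsum{-p}),\]
where well-definedness of the resulting polynomial uses \cref{lem:powSumRescaling}. Second, \cref{thm:dualPowSum} rewrites as $\dual V(\powsum{p}) = V(\powsum{p/(p-1)})$ by solving $\tfrac{1}{p} + \tfrac{1}{q} = 1$ for $q$, valid for $p \neq 1$.

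Composing these two base identities yields the one-step formulas
\[\dual \reci V(\powsum{p}) = V(\powsum{p/(1+p)}) \quad (p \neq 0, -1), \qquad \reci \dual V(\powsum{p}) = V(\powsum{p/(1-p)}) \quad (p \neq 0, 1).\]
The theorem then follows by induction on $k$. For the first formula, assuming $(\dual \reci)^k V(\powsum{p}) = V(\powsum{p_k})$ with $p_k := p/(1+kp)$, applying the one-step identity to $p_k$ gives
\[(\dual \reci)^{k+1} V(\powsum{p}) = V(\powsum{p_k/(1+p_k)}),\]
and a direct calculation shows $p_k/(1+p_k) = p/(1+(k+1)p)$. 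The second formula is analogous, with $p/(1-jp)$ in place of $p/(1+jp)$ throughout.

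The only genuine subtlety is bookkeeping the exceptional exponents across the iteration: each intermediate application of the one-step identity requires $p_j \neq -1$ (resp.\ $p_j \neq 1$), which unravels to $p \neq -1/(j+1)$ (resp.\ $p \neq 1/(j+1)$). Running $k$ steps therefore excludes precisely the set $\{0, -1, -\tfrac{1}{2}, \ldots, -\tfrac{1}{k}\}$ (resp.\ with positive signs), matching the exclusion sets in the theorem statement. This is the main piece of care required; the rest of the argument is a short induction.
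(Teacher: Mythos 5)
Your proposal is correct and follows essentially the same route as the paper's own proof: both use the reciprocal identity $\reci V(\powsum{p}) = V(\powsum{-p})$ (obtained from \cref{lem:powSumRescaling}) together with \cref{thm:dualPowSum}, and run an induction on $k$. The only cosmetic difference is that the paper deduces the $(\reci\dual)^k$ formula from the already-established $(\dual\reci)^k$ formula via the conjugation identity $(\reci\dual)^k = \reci\,(\dual\reci)^k\,\reci$, whereas you run a second parallel induction; both are fine.
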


%
 \begin{proof} 
  We show the claim for $V(\powsum{p})$ by induction on $k$. For $k=0$, the 
  claim is trivial. For $k > 0$, we get by induction hypothesis:
  \begin{align*}
    (\dual \reci)^k \: V(\powsum{p}) &= \dual \reci V(\powsum{p/(1+(k-1)p)}) 
    = (V(\powsum{p/(1+(k-1)p)})^{\circ (-1)})^* \\ &\stackrel{(*)}{=} 
    V(\powsum{-p/(1+(k-1)p)})^* \stackrel{(**)}{=} V(\powsum{p/(1+kp)}),
  \end{align*}
  where $(*)$ holds by \cref{lem:powSumRescaling} and $(**)$ by 
  \cref{thm:dualPowSum}.
  From this, we also see
    \[(\reci \dual)^k \: V(\powsum{p}) = \reci (\dual \reci)^k \reci 
    V(\powsum{p}) = \reci (\dual \reci)^k V(\powsum{-p}) = \reci 
    V(\powsum{-p/(1-kp)}) = V(\powsum{p/(1-kp)}),\]
  concluding the proof.
\end{proof}

\begin{cor}\label{repeatedDR}
  For $r > 0$, the repeated alternating reciprocals and duals of 
  the linear space $V(\powsum{1}) \subset \P^n$ are the coordinate-wise powers 
  of $V(\powsum{1})$ given as
    \begin{align*}
      {\underbrace{\dual \reci \dual \reci \ldots \dual \reci}_{2r-2}} \:
      V(\powsum{1}) &= V(\powsum{1})^{\circ r} \qquad 
      \text{and} \qquad 
      {\underbrace{\reci \dual \reci \ldots \dual \reci}_{2r-1}} \:
      V(\powsum{1}) = V(\powsum{1})^{\circ (-r)}.
    \end{align*}    
\end{cor}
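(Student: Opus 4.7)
The plan is to derive both identities as immediate consequences of Theorem \ref{cor:dualReciprocals} specialized at $p=1$, translating the generalised power sum hypersurface $V(\powsum{1/r})$ back into a coordinate-wise power of $V(\powsum{1})$ via the definition of generalised power sums and Lemma \ref{lem:powSumRescaling}.

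For the first identity I would observe that the displayed string of $2r-2$ alternating operators factors as $(\dual \reci)^{r-1}$, and that $p=1$ manifestly avoids the excluded set $\{0,-1/(r-1),\ldots,-1\}$ appearing in Theorem \ref{cor:dualReciprocals}. Applying that theorem with $p=1$ and $k=r-1$ gives
\[
(\dual \reci)^{r-1} V(\powsum{1}) \;=\; V(\powsum{1/(1+(r-1))}) \;=\; V(\powsum{1/r}).
\]
Since, by definition of the generalised power sum polynomial applied to the rational number $1/r = 1/r$, we have $\powsum{1/r} = \powsum{1}^{\circ r}$, the right-hand side is $V(\powsum{1})^{\circ r}$, as required.

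For the second identity I would factor the string of $2r-1$ alternating operators as $\reci \circ (\dual \reci)^{r-1}$, so by the first step it evaluates to
\[
\reci V(\powsum{1/r}) \;=\; V(\powsum{1/r})^{\circ(-1)}.
\]
Applying Lemma \ref{lem:powSumRescaling} with $\lambda = -1$ (to the identity $\powsum{1/r} = \powsum{1}^{\circ r}$, i.e.\ $s=1$ and $r$ as the coordinate-wise exponent), one obtains $V(\powsum{1/r})^{\circ(-1)} = V(\powsum{-1/r}) = V(\powsum{-1}^{\circ r}) = V(\powsum{1})^{\circ(-r)}$, where the last equality is another instance of Lemma \ref{lem:powSumRescaling}. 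This gives the desired identification.

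The argument is essentially combinatorial bookkeeping once Theorem \ref{cor:dualReciprocals} is in hand; there is no serious obstacle. The only points that require attention are verifying that the operator strings factor correctly as $(\dual \reci)^{r-1}$ and $\reci(\dual \reci)^{r-1}$ respectively (a direct parity count), and confirming admissibility of $p=1$ in Theorem \ref{cor:dualReciprocals}, which is automatic because the excluded parameters are all non-positive.
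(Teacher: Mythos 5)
Your proof is correct and follows exactly the route the paper intends: specialize \cref{cor:dualReciprocals} at $p=1$ (the excluded parameter set is automatically avoided), factor the operator strings as $(\dual\reci)^{r-1}$ and $\reci(\dual\reci)^{r-1}$, and unwind $\powsum{1/r}=\powsum{1}^{\circ r}$ via the definition of generalised power sums. The only small blemish is in the second identity: the step $V(\powsum{1/r})^{\circ(-1)} = V(\powsum{-1/r})$ is not an application of \cref{lem:powSumRescaling}; it follows from the identity $\reci V(\powsum{q}) = V(\powsum{-q})$ (used repeatedly in the paper's proof of \cref{cor:dualReciprocals}, and itself a consequence of $\varphi_{-r}=\varphi_{-1}\circ\varphi_r$ plus the definition of $\powsum{p}$), while \cref{lem:powSumRescaling} is only needed for the final rewriting $\powsum{-1}^{\circ r}=\powsum{1}^{\circ(-r)}$; this is a matter of attribution, not a gap.
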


\begin{ex}
  Let $n = 3$ and $f:=x_0+x_1+x_2+x_3$. The reciprocal variety of the plane $V(f)\subset\P^3$ is 
  given by $\powsum{-1} = x_1 x_2 
  x_3+x_0x_2x_3+x_0x_1x_3+x_0x_1x_2$. 
  Its dual is $V(\powsum{1/2}) = V(\powsum{1})^{\circ 2} \subset \P^3$ by 
  \cref{thm:dualPowSum}. This is the quartic surface from 
  Example~\ref{ex:quarticSingularSurface}. Higher iterated dual-reciprocal 
  varieties of $V(f)$ can be explicitly computed analogous to 
  Example~\ref{ex:quarticSingularSurface} via \cref{cor:dualReciprocals}. For 
  instance, the surface $\dual \reci \dual \reci V(f) \subset 
  \P^3$ is the coordinate-wise cube of $V(f)$ which is the degree~9 
  surface illustrated in Figure~\ref{fig:cubeOfPlane}.
  \begin{figure}[h]
    \includegraphics[width=0.2\textwidth]{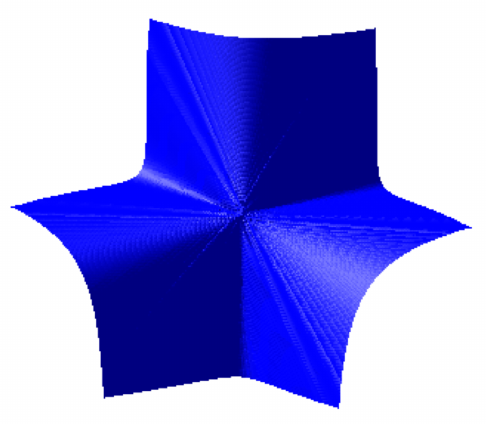}
    \caption{The iterated dual-reciprocal $\dual \reci \dual \reci 
    V(f) \subset \P^3$}
    \label{fig:cubeOfPlane}
  \end{figure}
\end{ex}

\begin{remark}[Coordinate-wise rational powers]
  The construction of the generalised power sum hypersurfaces $V(\powsum{p})$ may be understood in a broader context of coordinate-wise powers with rational exponents: For a subvariety $X \subset \P^n$, and a rational number $p = r/s$ with $r \in \Z$ and $s \in \Z_{>0}$ relatively prime, we may define the coordinate-wise $p$-th power $X^{\circ p} := \varphi_s^{-1}(X^{\circ r}) = (\varphi_s^{-1}(X))^{\circ r}$. This is a natural generalisation of the coordinate-wise integer powers $X^{\circ r}$. With this definition, 
  the generalised power sum hypersurface $V(\powsum{p})$ is the $1/p$-th coordinate-wise power of $V(\powsum{1})$. While we focus on coordinate-wise powers to integral exponents in this article, many results easily transfer to the case of rational exponents. For instance, the defining ideal of $X^{\circ (r/s)}$ is obtained by substituting $x_i \mapsto x_i^s$ in each of the generators of the vanishing ideal of $X^{\circ r}$ -- in particular, the number of minimal generators for these two ideals agree.
\end{remark}

\subsection{From hypersurfaces to arbitrary varieties?}

We briefly discuss to what extent \cref{prop:hypersurfaces} can be 
used to determine coordinate-wise powers of arbitrary varieties, and mention 
the difficulties involved in this approach.

Let $r > 0$ and let $f_1, \ldots, f_m$ be homogeneous polynomials vanishing on a variety $X 
\subset \P^n$. Their $r-$th coordinate-wise powers give rise to the inclusion 
$X^{\circ r} \subset V(f_1^{\circ r}, \ldots, f_m^{\circ r})$.
We may ask when equality holds, which leads us to the following definition, 
reminiscent of the notion of \emph{tropical bases} in Tropical Geometry 
\cite[Section~2.6]{MS}.

\begin{definition}[Power basis] \label{def:powerBasis}
  A set of homogeneous polynomials $f_1, \ldots, f_m \subset \Cx$ is an 
  $r$-th power basis of the ideal $I = (f_1, \ldots, f_m)$ if the 
  following equality of sets holds:
    \[V(f_1, \ldots, f_m)^{\circ r} = V(f_1^{\circ r}, \ldots, f_m^{\circ r}).\]
\end{definition}



We show the existence of such power bases for a given ideal in the following 
proposition.

\begin{prop}[Existence of power bases] \label{prop:higherCodimension}
  Let $I \subset \Cx$ be a homogeneous ideal. Then for each $r$, there exists 
  an $r$-th power basis of $I$.
\end{prop}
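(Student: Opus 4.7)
The plan is to first establish the set-theoretic description
\[X^{\circ r} = \bigcap_{f \in I \text{ homogeneous}} V(f^{\circ r}) \subset \P^n,\]
where $X := V(I)$, and then extract a finite subset of defining polynomials via Noetherianity, completing to a generating set of $I$ by appending any finite list of homogeneous generators.

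For the inclusion $\subset$ in the above identity, I would use that $f \in I$ implies $V(f) \supset X$, hence $V(f^{\circ r}) = V(f)^{\circ r} \supset X^{\circ r}$; this is immediate. The reverse inclusion is the substantive step, and I would argue contrapositively. Given $q \in \P^n \setminus X^{\circ r}$, fix any $p \in \varphi_r^{-1}(q)$, so that by \cref{prop:preimage} the fibre $\varphi_r^{-1}(q) = \G \cdot p$ is finite, and no point $\tau \cdot p$ lies in $X$ (otherwise $q = \varphi_r(\tau \cdot p) \in X^{\circ r}$). For each $\tau \in \G$, there exists some homogeneous $h_\tau \in I$ with $h_\tau(\tau \cdot p) \neq 0$, and by multiplying each $h_\tau$ with an appropriate power of a coordinate that does not vanish at $\tau \cdot p$, I can arrange all of them to share a common degree $d$. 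Then each $H_\tau := \{g \in I_d \mid g(\tau \cdot p) = 0\}$ is a proper $\C$-linear subspace of $I_d$, and since $\C$ is infinite, the vector space $I_d$ is not covered by these finitely many proper subspaces. Picking $f \in I_d \setminus \bigcup_\tau H_\tau$ yields a single homogeneous element of $I$ not vanishing on any point of $\varphi_r^{-1}(q)$, so $q \notin V(f)^{\circ r} = V(f^{\circ r})$.

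Once the intersection identity is in hand, Noetherianity of $\P^n$ yields homogeneous $f_1, \ldots, f_m \in I$ with $X^{\circ r} = V(f_1^{\circ r}, \ldots, f_m^{\circ r})$. Fixing any finite homogeneous generating set $g_1, \ldots, g_N$ of $I$ and adjoining it produces a set $\{f_1, \ldots, f_m, g_1, \ldots, g_N\}$ which still generates $I$; since each $g_j \in I$ gives $V(g_j^{\circ r}) \supset X^{\circ r}$, the set-theoretic intersection of all of these hypersurfaces remains equal to $X^{\circ r}$, so the augmented collection is the required $r$-th power basis.

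The main obstacle is the second inclusion of the intersection identity, which requires passing from local polynomials $h_\tau \in I$ (each defeating a single point $\tau \cdot p$ of the fibre) to a single global $f \in I$ simultaneously defeating all $|\G|$ points. The homogenisation to a common degree, together with the fact that a vector space over the infinite field $\C$ is not a finite union of proper subspaces, are the essential ingredients; everything else follows formally from the preimage description in \cref{prop:preimage} and the Noetherian property of the Zariski topology on $\P^n$.
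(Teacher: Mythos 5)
Your proof is correct but takes a genuinely different route from the paper's. The paper argues constructively: it takes homogeneous generators $g_1, \ldots, g_m$ of the defining ideal of $V(I)^{\circ r}$ and pulls them back under the substitution $x_i \mapsto x_i^r$; the resulting polynomials $f_i$ vanish on $V(I)$ and satisfy $V(f_i^{\circ r}) = V(g_i)$, so appending any finite homogeneous generating set of $I$ yields a power basis. Your argument instead first establishes the set-theoretic identity
$X^{\circ r} = \bigcap_{f \in I \text{ hom.}} V\left(f^{\circ r}\right)$,
proving the hard inclusion by a prime-avoidance argument: given $q \notin X^{\circ r}$, no point of the finite fibre $\G \cdot p = \varphi_r^{-1}(q)$ lies in $X$, you move witnessing elements of $I$ to a common degree $d$ by multiplying with suitable coordinate monomials, and use that a $\C$-vector space is not a finite union of proper subspaces to choose a single $f \in I_d$ that separates the entire fibre; Noetherianity then extracts a finite subcollection. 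Your route is longer but more elementary, and it carries one small advantage: since you pick $f$ directly from $I_d$, your separating polynomials genuinely lie in $I$, whereas the paper's step ``$V(f_i) \supset V(I)$ hence $f_i \in I$'' strictly speaking only gives $f_i \in \sqrt{I}$ via the Nullstellensatz and would need a small repair (replacing $f_i$ by a suitable power, or $I$ by its radical) to land in $I$ literally when $I$ is not radical.
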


\begin{proof}
  Let $J$ denote the defining ideal of $V(I)^{\circ r} \subset 
  \P^n$. If $J$ is generated by homogeneous polynomials $g_1, \ldots, g_m \in 
  \Cx$, we define $f_1, \ldots, f_m \in \Cx$ to be their images under the ring 
  homomorphism $\Cx \to \Cx$, $x_i \mapsto x_i^r$. Then 
  $f_i\in I$, since 
    \[V(f_i) =  \varphi_r^{-1}(V(g_i)) \supset 
    \varphi_r^{-1}(V(I)^{\circ r}) \supset 
    V(I).\]
  On the other hand, we have $f_i^{\circ r} = g_i$, since 
  $V(f_i)^{\circ r} = \varphi_r(\varphi_r^{-1}(V(g_i))) = V(g_i)$ by 
  surjectivity of $\varphi_r$. Therefore, $f_1^{\circ r}, \ldots, 
  f_m^{\circ r}$ generate $J$. Enlarging $f_1, \ldots, f_m$ to a 
  generating set of $I$ gives an $r$-th power basis of $I$.
\end{proof}


\begin{remark}
  \cref{prop:higherCodimension} shows the existence of $r$-th power bases, but 
  explicitly determining one a priori is nontrivial. For the variety of orthostochastic matrices $\Orth{m}^{\circ 2}$ as in \cref{ssec:Orthostochastic}, it is natural to suspect that the quadratic equations defining the variety of orthogonal matrices would form a power basis for $r = 2$. This is the question discussed in \cite[Section~3]{CD}, where it was shown that this is true for $m = 3$, but not for $m \geq 6$. The cases $m = 4,5$ are an open problem \cite[Problem~6.2]{CD}. Our results on the degree of $\Orth{m}^{\circ 2}$ reduce this open problem to the computation whether explicitly given polynomials $f_1^{\circ 2}, \ldots, f_k^{\circ 2}$ describe an irreducible variety of the correct dimension and degree. Straightforward implementations of this computation seem to be beyond current computer algebra software.
\end{remark}

In the following two 
examples, we will see that even in the case of squaring codimension~2 linear 
spaces, obvious candidates for $f_1, \ldots, f_m$ do not form a power basis.

\begin{ex} \label{ex:Codim2GensDontWork}
  Let $I := (f_1, f_2) \subset \Cx$ be the ideal defining the line in $\P^3$ 
  that is given by $f_1 := x_0+x_1+x_2+x_3$ and $f_2 := x_1+2x_2+3x_3$.
  The polynomials $f_1^{\circ 2}$ and $f_2^{\circ 2}$ have degrees~4 and~2, 
  respectively, by \cref{prop:hypersurfaces}. Note that the polynomial 
  $f_3 := 3x_0^2-x_1^2+x_2^2-3x_3^2 = 3(x_0-x_1-x_2-x_3)f_1+2(x_1+x_2)f_2$ also 
  lies in $I$, so the ideal of $V(I)^{\circ 2}$ contains the linear form 
  $f_3^{\circ 2} = 3x_0-x_1+x_2-3x_3$. The polynomials $f_1,f_2$ do not form a 
  power 
  basis of $I$. In fact, one can check that $V(f_1^{\circ 2}, f_2^{\circ 2}) 
  \subset \P^3$ is the union of four rational quadratic curves, one of which 
  is $V(I)^{\circ 2}$, see Figure~\ref{fig:4Conics} for an 
  illustration. A power basis of $I$ is given by $f_1,f_2,f_3$.
   \begin{figure}[h]
    \includegraphics[width=0.2\textwidth]{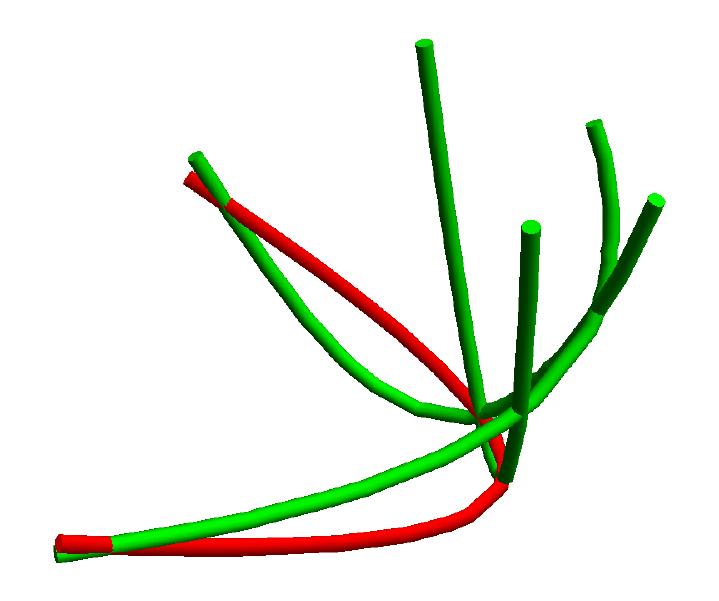}
    \caption{Distinction between $V(f_1^{\circ 2}, f_2^{\circ 2})$ and 
    $V(f_1,f_2)^{\circ 2}$}
    \label{fig:4Conics}
  \end{figure}
\end{ex}

\begin{ex} \label{ex:Codim2CircuitsDontWork}
  Another natural choice for polynomials $f_1, \ldots, f_m$ in the ideal of a 
  linear space $X \subset \P^n$ consists of the \emph{circuit forms}, i.e.\ 
  linear forms vanishing on $X$ that are minimal with respect to the set of 
  occurring variables. However, for 
  \[X := V(x_0+x_1+x_2+x_3+x_4, \; x_1+2x_2+3x_3+4x_4) \subset \P^4,\]
  these circuit forms are
  \begin{gather*}
    f_1 = x_1+2x_2+3x_3+4x_4, \quad f_2 = x_0-x_2-2x_3-3x_4, \quad f_3 = 
  2x_0+x_1-x_3-2x_4, \\
   f_4 = 3x_0+2x_1+x_2-x_4, \quad f_5 = 
  4x_0+3x_1+2x_2+x_3,
  \end{gather*}
  and one can check that the point $[16:16:1:36:9] \in \P^4$ lies in 
  $V(f_1^{\circ 2}, \ldots, f_5^{\circ 2})$, but not in $X^{\circ 2}$. In 
  particular, $f_1, \ldots, f_5$ is not an $r$-th power basis for $r = 2$.
\end{ex}

We have seen in \cref{ex:Codim2GensDontWork} and 
\cref{ex:Codim2CircuitsDontWork} that even for the case of linear spaces of 
codimension~2 it is not an easy task to a priori identify an $r$-th power 
basis.

The following proposition shows how one can straightforwardly find a very large
$r$-th power basis of an ideal $I$, without first computing the ideal of 
$V(I)^{\circ r}$.

\begin{prop} \label{prop:stupidPowerBasis}
  If $g_1, \ldots, g_k \in \Cx_d$ are forms of degree~$d$, then taking 
  $(k-1)r^n+1$ general linear combinations of $g_1, \ldots, g_k$
  produces an $r$-th power basis of $(g_1, \ldots, g_k)$.
\end{prop}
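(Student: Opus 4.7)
The plan is to reformulate the power-basis condition via preimages under $\varphi_r$ and then combine a pigeonhole argument with an elementary genericity statement on $N$-tuples of points in $\P^{k-1}$.

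Setting $I := (g_1, \ldots, g_k)$ and $\tilde X := \bigcup_{\tau \in \G} \tau \cdot V(I)$, applying \cref{prop:preimage} to $V(I)$ and to each hypersurface $V(f_i)$ shows that the power-basis identity $V(f_1^{\circ r}, \ldots, f_N^{\circ r}) = V(I)^{\circ r}$ is equivalent to
\[
  \bigcap_{i=1}^N \bigcup_{\tau \in \G} \tau \cdot V(f_i) \;=\; \tilde X.
\]
Since each $f_i \in I$, the inclusion $\supset$ is automatic, so I must show the reverse: for generic $(f_1, \ldots, f_N)$, no $\tilde y \in \P^n \setminus \tilde X$ lies in $\bigcup_\tau \tau \cdot V(f_i)$ for every $i$. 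I may assume without loss of generality that $g_1, \ldots, g_k$ are linearly independent (else pass to a basis of their span, which only weakens the required bound), so that a linear combination $f = \sum_j \lambda_j g_j$ is parametrized by $[\lambda] \in \P^{k-1}$.

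Fix $\tilde y \in \P^n \setminus \tilde X$. For each $\tau \in \G$, the condition $\tilde y \in \tau \cdot V(f)$ becomes the single linear equation $\sum_j \lambda_j \, g_j(\tau \cdot \tilde y) = 0$ on $[\lambda]$. Because $\tau \cdot \tilde y \notin V(I)$, not all coefficients $g_j(\tau \cdot \tilde y)$ vanish, so this cuts out a proper hyperplane $H_\tau(\tilde y) \subset \P^{k-1}$. Hence the locus of $f \in \P^{k-1}$ which is \emph{bad} for $\tilde y$ is contained in the union $\bigcup_{\tau \in \G} H_\tau(\tilde y)$ of at most $|\G| = r^n$ proper hyperplanes.

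Suppose every $f_i$ is bad for some such $\tilde y$, and for each $i$ fix $\tau_i \in \G$ with $f_i \in H_{\tau_i}(\tilde y)$. Since $N = (k-1)r^n + 1 > (k-1)\cdot|\G|$, pigeonhole produces a common $\tau \in \G$ and a subset $S \subset \{1,\ldots,N\}$ with $|S| = k$ such that $\{f_i : i \in S\}$ all lie in the single hyperplane $H_\tau(\tilde y)$; equivalently, the $k$ vectors $\{f_i : i \in S\} \subset \C^k$ are linearly dependent. For each of the $\binom{N}{k}$ choices of $S$, linear dependence of $\{f_i : i \in S\}$ is a proper closed condition on $(f_1, \ldots, f_N) \in (\P^{k-1})^N$, so a generic $N$-tuple simultaneously avoids all of them and thereby excludes any such bad $\tilde y$. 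The main subtlety to watch for is the uniformity across all $\tilde y \in \P^n \setminus \tilde X$: the hyperplanes $H_\tau(\tilde y)$ vary with $\tilde y$, but the pigeonhole step is precisely what converts this $\tilde y$-dependent constraint into a single $\tilde y$-independent degeneracy (linear dependence of some fixed $k$-subset of the $f_i$'s), allowing one generic-choice argument to handle all $\tilde y$ at once.
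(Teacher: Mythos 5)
Your proof is correct and follows essentially the same route as the paper: reduce to preimages via \cref{prop:preimage}, apply pigeonhole over $\G$ to find $k$ indices sharing a single $\tau$, and use linear independence of any $k$ of the $f_i$ (i.e.\ they span $\langle g_1,\ldots,g_k\rangle$) to force the point into $\tau \cdot V(I)$. The only difference is cosmetic: you spell out why ``no $k$ of the $f_i$ are linearly dependent'' is a generic condition on $(\P^{k-1})^N$ by identifying the bad locus for a fixed $\tilde y$ as a union of hyperplanes, whereas the paper simply assumes this genericity at the outset.
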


\begin{proof}
  We assume that $g_1, \ldots, g_k$ are linearly independent, or else we 
  can replace them with a linearly independent subset. For $m := (k-1)r^n+1$, 
  let 
  $f_1, \ldots, f_m \in 
  \langle g_1, \ldots, g_k \rangle$ be such that no $k$ of them are linearly 
  dependent.
  For $X := V(g_1, \ldots, g_k)$, we
  will show that $V(f_1^{\circ r}, \ldots, f_m^{\circ r}) = X^{\circ r}$ by 
  comparing the preimages of both sides under $\varphi_r \colon \P^n \to \P^n$. 
  
  By Proposition~\ref{prop:preimage}, we have $\varphi_r^{-1}(X^{\circ r}) = 
  \bigcup_{\tau \in \G} \tau \cdot X$ and
  \[\varphi_r^{-1}(V(f_1^{\circ r}, \ldots, f_m^{\circ r})) = 
  \bigcap_{i=1}^m \varphi_r^{-1} (\varphi_r(V(f_i))) = 
  \bigcap_{i=1}^m \bigcup_{\tau \in \G} \tau \cdot V(f_i).\]
  Let $p \in \varphi_r^{-1} (V(f_1^{\circ r}, \ldots, f_m^{\circ r})) \subset 
  \P^n$. Then for each $i \in  \{1,\ldots,m\}$ there exists some $\tau \in \G$ 
  with $p \in \tau \cdot V(f_i)$ using the last equality above. Since $m > (k-1)|\G|$, by pigeonhole principle there must exist $\tau 
  \in \G$ and $i_1, i_2,\ldots, i_k \in \{1,\ldots,m\}$ distinct with $p \in 
  \bigcap_{j=1}^k \tau \cdot V(f_{i_j}) = \tau \cdot 
  V(f_{i_1},\ldots,f_{i_k})$. Since,  by assumption, no $k$ of them are linearly dependent $f_{i_1}, \ldots, f_{i_k}$ span $\langle g_1, 
  \ldots, g_k\rangle$. Therefore, $V(f_{i_1},\ldots,f_{i_k}) = X$, and hence, $p\in\tau \cdot 
  V(f_{i_1},\ldots,f_{i_k})$
  implies that $p \in \tau \cdot X \subset \varphi_r^{-1}(X^{\circ r})$. 
  This shows $\varphi_r^{-1} (V(f_1^{\circ r}, \ldots, f_m^{\circ r})) \subset 
  \varphi_r^{-1}(X)$. The reverse inclusion is trivial.
\end{proof}

In particular, \cref{prop:stupidPowerBasis} shows that for a subvariety of 
$\P^n$ defined by $k$ forms of degree~$d$, its coordinate-wise $r$-th power can 
be described set-theoretically by the vanishing of $(k-1)r^n+1$ forms of degree 
$\leq d r^{n-1}$. However, we will see in \cref{sec:LinSpaces} that for linear 
spaces this bound is rather weak in many cases and should be expected to allow 
dramatic refinement in general. We raise the following as a broad open question:

\begin{question}
  When does a set of homogeneous polynomials form an $r$-th 
  power basis? For a given ideal $I$, do there exist polynomials $f_1, \ldots, f_m \in I$ that simultaneously form an $r$-th power basis for all $r$?
\end{question}

\section{Linear spaces} \label{sec:LinSpaces}

In this section, we specialise to linear spaces $L \subset \P^n$ and investigate 
their coordinate-wise powers $L^{\circ r}$. First, we highlight the dependence 
of $L^{\circ r}$ on the geometry of a finite point configuration associated to 
$L \subset \P^n$. For $r=2$, we point out its relation to symmetric matrices with degenerate eigenvalues. Based on this, we classify the coordinate-wise squares of 
lines and planes. Finally, we turn to the case of squaring linear spaces in high-dimensional ambient space.

\bigskip
\subsection{Point configurations}

We study the defining ideal of $L^{\circ r}$ for a $k$-dimensional
linear space $L \subset \P^n$. The degrees of its minimal generators do not 
change under rescaling and permuting coordinates of $\P^n$, i.e.\ under the 
actions of the algebraic torus $\mathbb{G}_m^{n+1} = (\C^*)^{n+1}$ and the 
symmetric group  $\mathfrak{S}_{n+1}$. Fixing a $(k+1)$-dimensional vector 
space $W$, we have the identification
\begin{equation*}
\begin{aligned}
\{\text{\small orbits of $\Gr(k,\P^n)$ under $\mathbb{G}_m^{n+1} 
  \rtimes 
  \mathfrak{S}_{n+1}$}\} &\leftrightarrow \left\{\substack{\text{\small 
    finite multi-sets $Z \subset \P W^*$ with $\langle Z \rangle = \P W^*$} \\ 
  \text{\small of cardinality $\leq n+1$ up to 
    $\Aut(\P W^*)$}}\right\} \\
L = \im(\P W \xhookrightarrow{[\ell_0:\ell_1:\ldots:\ell_s:0:\ldots:0]} 
\P^n) \qquad & \substack{\mapsfrom \\ \mapsto} \qquad Z = \{[\ell_0], 
[\ell_1], \ldots, [\ell_s]\} \subset \P W^*, s \leq n.
\end{aligned}
\end{equation*}
Hence, we may express coordinate-wise powers of a linear space $L$ in terms of 
the 
corresponding finite multi-set $Z \subset \P W^*$. In fact, it is easy to check 
that the degrees of the minimal generators of the defining ideal only depend on 
the underlying set $Z$, forgetting repetitions in the multi-set. We 
study coordinate-wise powers of a linear space in terms of the 
corresponding 
non-degenerate finite point configuration.
\medskip

For the entirety of Section~\ref{sec:LinSpaces}, we establish the following 
notation: Let $L\subset \P^n$ be a linear space of dimension $k$. We understand 
$L$ as the image of a chosen linear embedding 
$\iota\colon \P W \xhookrightarrow{[\ell_0:\ldots:\ell_n]} \P^n$, where 
$W$ is a $(k+1)$-dimensional vector space and $\ell_0, \ldots, \ell_n \in W^*$ 
are linear forms defining $\iota$. Consider the finite set of points $Z 
\subset \P W^*$ given by
\[Z := \{[\ell_i] \in \P W^* \mid 0 \leq i \leq n \text{ such that } \ell_i 
\neq 0\}.\] 

Since $\ell_0, \ell_1, \ldots, \ell_n \in W^*$ define the linear embedding 
$\iota$, they cannot have a common zero in $W$. Hence, the linear span of $Z$ 
is the whole space $\P W^*$. We denote by $I(Z) \subset \Sym^\bullet W$ the 
defining ideal of $Z \subset \P W^*$. The subspace of degree~$r$ forms 
vanishing on $Z$ is written as $I(Z)_r \subset \Sym^r W$.

The main technical tool is the following observation that 
$L^{\circ r} \subset \P^n$ equals (up to a linear re-embedding) 
the image of the $r$-th Veronese variety $\nu_r(\P W)\subset \P \Sym^r W$ under 
the projection from the linear space $\P(I(Z)_r) \subset \P \Sym^r W$. 

\begin{lemma} \label{lem:intrinsicDescription}
  The diagram
  \[\begin{tikzcd}
      & \P W \arrow[hookrightarrow]{r}{\nu_r} 
      \arrow{dr}{\psi} \arrow{d}{\varphi_r \circ \iota}
      & \P \Sym^r W \arrow[dashed]{d}{\pi} \\
      \P^n \arrow[hookleftarrow]{ur}{\iota}  \arrow{r}{\varphi_r} & \P^n 
      \arrow[hookleftarrow]{r}{\vartheta} & 
      \P(\Sym^r W/I(Z)_r)
    \end{tikzcd}\]
  commutes, where $\nu_r$ is the $r$-th Veronese embedding, $\pi$ is the linear 
  projection of $\P \Sym^r W$ from the linear space $\P(I(Z)_r)$, $\psi$ is a 
  morphism and $\vartheta$ is a linear embedding.
\end{lemma}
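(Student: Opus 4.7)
The plan is to construct $\vartheta$ and $\psi$ explicitly via the evaluation of degree-$r$ forms at the points of the configuration $Z$, and then verify commutativity by a direct computation. The key identification is to view $\Sym^r W$ as the space of homogeneous polynomial functions of degree $r$ on $W^*$; in particular, $w^r \in \Sym^r W$ is the function $\ell \mapsto \ell(w)^r$.

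First I would introduce the evaluation linear map $\mathrm{ev} \colon \Sym^r W \to \C^{n+1}$, $f \mapsto (f(\ell_0), \ldots, f(\ell_n))$. By definition of $I(Z)$, a form $f \in \Sym^r W$ lies in $I(Z)_r$ iff $f(\ell_i) = 0$ for every $i$ with $\ell_i \neq 0$, which (since $f(0) = 0$ for $r > 0$) is equivalent to $f \in \ker(\mathrm{ev})$. Hence $\mathrm{ev}$ descends to an injection $\Sym^r W/I(Z)_r \hookrightarrow \C^{n+1}$, whose projectivization I take as $\vartheta$; this is by construction a linear embedding.

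Next I would check that $\pi \circ \nu_r$ is a morphism. By definition, $\pi$ is the rational map associated with the quotient map $\Sym^r W \to \Sym^r W/I(Z)_r$, with center $\P(I(Z)_r)$. If $\nu_r([w]) = [w^r]$ were in this center, then $\ell_i(w)^r = \mathrm{ev}(w^r)_i = 0$ for all $i$, forcing $\ell_i(w) = 0$ for all $i$; but the $\ell_i$ have no common zero in $W$ because $\iota$ is an embedding, giving a contradiction. Therefore $\nu_r(\P W) \cap \P(I(Z)_r) = \emptyset$, and $\psi := \pi \circ \nu_r$ is a well-defined morphism.

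Finally, commutativity of both triangles reduces to the identity
\[
(\vartheta \circ \pi \circ \nu_r)([w]) = \vartheta\bigl([\mathrm{ev}(w^r)]\bigr) = [\ell_0(w)^r : \ldots : \ell_n(w)^r] = (\varphi_r \circ \iota)([w]),
\]
which follows directly from the definitions of $\nu_r$, $\vartheta$, $\iota$, and $\varphi_r$. I do not anticipate a genuine obstacle; the only subtlety is bookkeeping with the duality between $W$ and $W^*$ so that the evaluation map really does realize the factorization of $\varphi_r \circ \iota$ through the Veronese followed by a linear projection.
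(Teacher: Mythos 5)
Your proposal is correct and takes essentially the same route as the paper: your evaluation map $\mathrm{ev}$ is exactly the map the paper calls $\chi$ (defined via the elements $\ell_i^r \in \Sym^r W^*$ under the identification $(\Sym^r W^*)^{n+1} = \Hom_\C(\Sym^r W, \C^{n+1})$), and both arguments rest on identifying $I(Z)_r$ with $\ker(\mathrm{ev})$ and deducing that $\nu_r(\P W)$ misses the center of projection because $\varphi_r\circ\iota$ is defined everywhere.
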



\begin{proof}
  We observe that the morphism $\varphi_r \circ \iota$ is given by
  \[\varphi_r \circ \iota \colon \P W \to \P^n, \qquad [v] \mapsto [\ell_0^r(v) 
  : \ell_1^r(v) \ldots : \ell_n^r(v)].\]
  The $n+1$ elements $\ell_i^r \in \Sym^r W^*$ correspond to a linear map 
  $\chi \colon \Sym^r W \to \C^{n+1}$ via the natural identification 
  $(\Sym^r W^*)^{n+1} = \Hom_\C(\Sym^r W, \C^{n+1})$.
  
  The rational map $\bar{\chi}$ between projective spaces corresponding to the 
  linear map $\chi$ gives the following commuting diagram:
    \[\begin{tikzcd}
        \P W \arrow[hookrightarrow]{r}{\nu_r} \arrow{d}{\varphi_r\, \circ\, 
        \iota} &
        \P \Sym^r W \arrow[dashed]{d}{\pi} \\
        \P^n \arrow[hookleftarrow]{r}{\vartheta}  
        \arrow[leftarrow, dashed]{ur}{\bar{\chi}} & 
        \P(\Sym^r W/\ker \chi),
      \end{tikzcd}\]
  where $\vartheta$ is the linear embedding of projective spaces induced by 
  factoring $\chi$ over $\Sym^r W/\ker \chi$. In particular, $\nu_r(\P W) \cap 
  \P(\ker \chi) = \emptyset$, since $\varphi_r \circ \iota$ is defined 
  everywhere 
  on $\P W$. Hence, $\restr{\pi}{\nu_r(\P W)} \colon \nu_r(\P W) \to \P(\Sym^r 
  W/\ker \chi)$ is a morphism.
  
  Finally, we claim that $\ker \chi = I(Z)_r$. Once we know this, defining 
  $\psi := \restr{\pi}{\nu_r(\P W)} \circ \nu_r$ completes the claimed diagram.
  
  Let $f \in \Sym^r W$ such that $f\in I(Z)_r$. Naturally identifying $W$ and $W^{**}$, we may view 
  $f$ as a form of degree $r$ on $W^*$. Then, the 
  condition that $f \in I(Z)_r$ translates to $f(\ell_i) = 0 \ \forall i$. Viewing 
  $f$ as a symmetric $r$-linear form $W^* 
  \times \ldots \times W^* \to \C$, we have  $f(\ell_i, 
  \ldots, \ell_i) = 0 \ \forall i$. Also, when $f$ is considered as a linear form on $\Sym^r W^*$, $f(\ell_i^r) = 0 \ \forall i$.  
  The latter expression is equivalent to $f \in \ker \chi$, via the 
  identification of $W$ and $W^{**}$. We conclude $I(Z)_r = \ker \chi$.
\end{proof}

In particular, we deduce the following:

\begin{prop} \label{prop:NoQuadrics}
  Let $L$ be a linear space such that the finite set of points $Z$ 
  does not lie on a degree~$r$ hypersurface. Then the 
  ideal of $L^{\circ r}$ is generated by linear and quadratic forms.
\end{prop}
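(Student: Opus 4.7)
The plan is to apply the intrinsic description lemma directly and reduce to the well-known fact that Veronese varieties are cut out by quadrics. The hypothesis that $Z$ does not lie on any degree~$r$ hypersurface means precisely that $I(Z)_r = 0$. Under this assumption, the projection $\pi \colon \P \Sym^r W \dashrightarrow \P(\Sym^r W/I(Z)_r)$ appearing in the lemma is simply the identity, so $\psi = \nu_r$ is the $r$-th Veronese embedding and $\vartheta$ is a linear embedding of $\P \Sym^r W$ into $\P^n$. Hence $L^{\circ r}$ is identified with the Veronese image $\nu_r(\P W) \subset \P \Sym^r W$ under this linear embedding.

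Next, I would decompose the defining ideal $I(L^{\circ r}) \subset \Cx$ accordingly. The linear embedding $\vartheta$ corresponds to an injective linear map $\Sym^r W \hookrightarrow \C^{n+1}$, and dually to a surjection $\Cx_1 \twoheadrightarrow (\Sym^r W)^*$. Let $U \subset \Cx_1$ be its kernel, a space of linear forms cutting out the linear subspace $\vartheta(\P \Sym^r W)$ inside $\P^n$. Since $\nu_r(\P W)$ linearly spans $\P \Sym^r W$, its image $L^{\circ r}$ spans $\vartheta(\P \Sym^r W)$, so the linear part of $I(L^{\circ r})$ equals $U$.

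Modulo the ideal generated by $U$, the quotient $\Cx/(U)$ is naturally identified with the homogeneous coordinate ring $\Sym^\bullet (\Sym^r W)^*$ of $\vartheta(\P \Sym^r W)$, and the image of $I(L^{\circ r})$ is the homogeneous ideal of $\nu_r(\P W) \subset \P \Sym^r W$. I would then invoke the classical fact that the ideal of the $r$-th Veronese variety $\nu_r(\P W) \subset \P \Sym^r W$ is generated by quadratic forms (arising as the $2 \times 2$ minors of the generic catalecticant). Lifting any such set of quadratic generators back to $\Cx$ and adjoining the linear forms $U$ yields a generating set of $I(L^{\circ r})$ consisting of linear and quadratic forms.

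The only nontrivial ingredient is the classical quadratic generation of the Veronese ideal; the rest is bookkeeping around the linear embedding $\vartheta$. The main conceptual point—and the only place where the hypothesis is used—is the vanishing $I(Z)_r = 0$, which eliminates the projection $\pi$ and leaves the geometry of $L^{\circ r}$ isomorphic (via a linear re-embedding) to that of the Veronese; once this is observed, no real obstacle remains.
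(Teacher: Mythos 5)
Your proof is correct and follows essentially the same route as the paper: apply \cref{lem:intrinsicDescription} with $I(Z)_r = 0$ to recognise $L^{\circ r}$ as a linear re-embedding of the Veronese $\nu_r(\P W)$, then cite quadratic generation of the Veronese ideal and add the linear forms coming from the embedding. You merely spell out the ideal-theoretic bookkeeping around $\vartheta$ (the kernel $U$, the identification $\Cx/(U) \cong \Sym^\bullet(\Sym^r W)^*$, and the non-degeneracy of $\nu_r(\P W)$ ensuring $U$ is exactly the linear part) in more detail than the paper, which states the same conclusion in one line.
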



\begin{proof}
  Since $I(Z)_r = 0$, we deduce from 
  \cref{lem:intrinsicDescription} that $L^{\circ r} = \varphi_r(L)$ 
  is a linear re-embedding of the $k$-dimensional $r$-th Veronese variety 
  $\nu_r(\P W) \subset \P \Sym^r W$. The ideal of this Veronese variety is 
  generated by quadrics. Since $\dim \Sym^r W = \tbinom{k+r}{r}$, 
  the 
  linear 
  re-embedding $\vartheta \colon \P \Sym^r W \hookrightarrow \P^n$ adds 
  $n-\smash{\binom{k+r}{r}}+1$ linear forms to the ideal.
\end{proof}

\bigskip
\subsection{Degenerate eigenvalues and squaring}

We now specialise to the case of coordinate-wise squaring, i.e.\ $r = 2$. This 
case has special geometric importance, since it corresponds to computing 
the image of a linear space under the quotient of $\P^n$ by the 
reflection group generated by the coordinate hyperplanes. 
In this section 
through \cref{prop:eigenvaluesEarly} we point out that the case of coordinate-wise square of a linear space is closely related to studying symmetric matrices with 
a degenerate 
spectrum of eigenvalues. Here, we 
interpret $\P \Sym^2 \F^{k+1}$ (for $\F = \R$ or $\C$) as the projective space 
consisting of symmetric $(k+1) \times (k+1)$-matrices up to scaling with entries in 
$\F$.

\begin{prop} \label{prop:eigenvaluesEarly}
  Let $X \subset \P \Sym^2 \R^{k+1}$ be the set of real symmetric $(k+1)\times 
  (k+1)$-matrices with an eigenvalue of multiplicity $\geq k$. 
  Then the Zariski closure of $X$ in $\P \Sym^2 \C^{k+1}$ is projectively equivalent to the projective cone over the coordinate-wise square $L^{\circ 2}$ of any $k$-dimensional linear space $L$ whose point configuration $Z \subseteq \P W^*$ lies on a unique and smooth quadric.
\end{prop}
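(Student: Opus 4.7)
The plan is to identify both $\overline{X}$ and the projective cone over $L^{\circ 2}$ as joins of a single point with the Veronese variety $\nu_2(\P W) \subset \P\Sym^2 W$, and then exhibit a projective equivalence of $\P\Sym^2 W$ sending one join to the other.

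First, I would pin down $\overline{X}$. By the real spectral theorem, every real symmetric $(k+1)\times(k+1)$-matrix with an eigenvalue of multiplicity at least $k$ has the form $\lambda I + vv^T$ for some $\lambda \in \R$ and $v \in \R^{k+1}$, so $X$ is the image of the polynomial map $(\lambda,v) \mapsto \lambda I + vv^T$ restricted to real parameters. Since a complex polynomial vanishing on $\R^{k+2}$ must vanish identically, the Zariski closure in $\P\Sym^2\C^{k+1}$ coincides with the closure of the complex version of this map, namely $\{[\lambda I + vv^T] \mid \lambda \in \C,\, v \in \C^{k+1}\}$. Because $[I]$ has rank $k+1$ and hence does not lie on the Veronese $\nu_2(\P\C^{k+1})$ of rank-one symmetric matrices, this closure is precisely $\mathrm{Join}([I],\,\nu_2(\P\C^{k+1}))$.

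Next, I would apply \cref{lem:intrinsicDescription}. Identifying $W \cong \C^{k+1}$, the hypothesis that $Z$ lies on a unique smooth quadric translates to $\dim I(Z)_2 = 1$ with generator a non-degenerate quadratic form $Q \in \Sym^2 W$; hence $\P(I(Z)_2) = \{[Q]\}$ is a single point not on the Veronese. The lemma then identifies $L^{\circ 2}$, after the linear re-embedding $\vartheta$, with the image of $\nu_2(\P W)$ under the linear projection $\pi$ of $\P\Sym^2 W$ from $[Q]$. Intrinsically inside $\P\Sym^2 W$, the projective cone over $L^{\circ 2}$ with apex the projection centre is therefore $\mathrm{Join}([Q],\,\nu_2(\P W))$.

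Finally, I would conclude via the natural linear action of $\GL(W)$ on $\P\Sym^2 W$ by congruence $M \mapsto gMg^T$. This action preserves the Veronese setwise, since $g(vv^T)g^T = (gv)(gv)^T$. Over $\C$, any two non-degenerate quadratic forms on $W$ are congruent, so some $g \in \GL(W)$ satisfies $gg^T = \lambda Q$ with $\lambda \neq 0$. The corresponding automorphism of $\P\Sym^2 W$ fixes $\nu_2(\P W)$ setwise and sends $[I]$ to $[Q]$, and therefore carries $\overline{X} = \mathrm{Join}([I],\nu_2(\P W))$ isomorphically onto $\mathrm{Join}([Q],\nu_2(\P W))$, the projective cone over $L^{\circ 2}$.

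The main obstacle I anticipate is reconciling the statement's notion of ``projective cone over $L^{\circ 2}$'' with the intrinsic cone in $\P\Sym^2 W$ produced by \cref{lem:intrinsicDescription}: because $\vartheta$ realises $L^{\circ 2}$ in $\P^n$ rather than in $\P(\Sym^2 W/I(Z)_2)$, some bookkeeping is needed to verify that the two cones agree in a common ambient projective space, so that the projective equivalence above lands in the right place. The remaining checks — Zariski density of the real parameterisation and that $[I] \notin \nu_2(\P W)$ for $k \geq 1$ — are routine.
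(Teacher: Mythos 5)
Your proof is correct and gives a genuinely different argument for the closure step than the paper does. The paper normalises $q=\id$ by a coordinate choice at the outset, defines the cone $X_1$ over $L^{\circ 2}$ inside $\P\Sym^2\C^{k+1}$, characterises it as the set of complex symmetric matrices admitting a codimension-$\leq 1$ eigenspace, checks $X_1\cap\P\Sym^2\R^{k+1}=X$, and then concludes $\overline{X}=X_1$ by a dimension count: $X$ is the $O(k+1)$-orbit of a real projective line (stabiliser $O(k)\times\{\pm1\}$), so $\dim_\R X=k+1=\dim_\C X_1$, and $X_1$ is irreducible. You instead parameterise $X$ directly via the spectral theorem as the projectivised image of $(\lambda,v)\mapsto\lambda I+vv^T$, invoke density of $\R^{k+2}$ in $\C^{k+2}$ to pass to the complex parameterisation, and identify the result as the join $\mathrm{Join}([I],\nu_2)$. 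Your approach trades the orbit-dimension bookkeeping for a concrete parameterisation, which is arguably cleaner. You also differ in how the two cones are matched up: the paper absorbs this into the initial choice of coordinates making $q=\id$, while you keep $q$ general and produce an explicit congruence $g$ with $gg^T=Q$ that preserves $\nu_2$ and moves $[I]$ to $[Q]$; these are equivalent but your version makes the ``projective equivalence'' in the statement more visibly explicit. Both proofs invoke \cref{lem:intrinsicDescription} in exactly the same way to recognise the cone over $L^{\circ 2}$ as $\mathrm{Join}([q],\nu_2)$, and your worry about $\vartheta$ landing in $\P^n$ rather than $\P(\Sym^2 W/I(Z)_2)$ is resolved just as you suspect, since a linear embedding induces a projective equivalence of cones. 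One small imprecision worth fixing: at the \emph{affine} level it is not true that every real symmetric matrix with an eigenvalue of multiplicity $\geq k$ equals $\lambda I+vv^T$ with $v\in\R^{k+1}$; if $M=\lambda_1 I+(\lambda_2-\lambda_1)uu^T$ with $\lambda_2<\lambda_1$, the rank-one part is negative semidefinite and one needs a minus sign. Projectively this is harmless, since $[M]=[-M]=[(-\lambda_1)I+(\lambda_1-\lambda_2)uu^T]$ is again of the stated form, so your density argument goes through unchanged, but the sentence as written overstates the affine spectral decomposition and should be phrased at the level of $\P\Sym^2\R^{k+1}$.
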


\begin{proof}
  Let $L \subset \P^n$ be a $k$-dimensional linear space such that 
  $I(Z)_2$ is spanned by a smooth quadric $q \in \P \Sym^2 W$. Choosing 
  coordinates of $W \cong \C^{k+1}$, we identify points in $\P \Sym^2 W$ with 
  complex symmetric $(k+1) \times (k+1)$-matrices up to scaling and we can 
  assume $q = \id \in \P \Sym^2 W$. The second Veronese variety 
  $\nu_2(\P W) \subset \P \Sym^2 W$ consists of rank~$1$ matrices. Let $X_0
  \subset \P(\Sym^2 W/\langle q \rangle)$ be the image of $\nu_2(\P W)$ under 
  the natural projection. By \cref{lem:intrinsicDescription},
  $X_0$ is the coordinate-wise square $L^{\circ 2}$ up to a linear re-embedding.
  
  The projective cone over $X_0 \cong L^{\circ 2}$ is the subvariety $X_1 
  \subset \P \Sym^2 W$ 
  consisting of complex symmetric matrices 
  $M$ such that the set $M+\langle \id \rangle$ contains a matrix of 
  rank~$\leq 1$. We observe that the rank of $M-\lambda \id$ is the codimension 
  of the eigenspace of $M$ with respect to $\lambda \in \C$. Hence,
  \[X_1 = \{M \in \P \Sym^2 \C^{k+1} \mid M \text{ has an eigenspace of 
    codimension~$\leq 1$}\}.\]
  
  We are left to show that $X_1$ is the Zariski closure in $\P \Sym^2 \C^{k+1}$ 
  of 
  $X \subset \P \Sym^2 \R^{k+1}$. Since 
  real symmetric matrices are diagonalizable, the multiplicity of an 
  eigenvalue 
  is the dimension of the corresponding eigenspace. Hence, $X_1 \cap \P \Sym^2 
  \R^{k+1}= X$. The set $X$ is the orbit of the line $V := \{\diag(\lambda, 
  \ldots, \lambda, \mu) \mid [\lambda:\mu] \in \P_\R^1\}$ under the action of 
  $O(k+1).$ The action is given by 
  conjugation 
  with orthogonal matrices and the stabiliser is $O(k) \times \{\pm 1\}$. 
  Therefore, $X$ has real dimension $\dim V + \dim O(k+1) - \dim O(k) = k+1$. 
  Also, $X_1$ is the projective cone over $X_0 \cong L^{\circ 2}$, so it 
  is a ${(k+1)}$-dimensional irreducible complex variety. We conclude that 
  $X_1$ is 
  the Zariski closure of $X$ in $\P \Sym^2 \C^{k+1}$.
\end{proof}

We illustrate \cref{prop:eigenvaluesEarly} in the case of $3 \times 3$-matrices:

\begin{ex}
  Consider the set of real symmetric $3 \times 3$-matrices with a repeated eigenvalue. We denote its Zariski closure in $\P \Sym^3 \C^2$ by $Y$. By \cref{prop:eigenvaluesEarly}, it can be understood in terms of the coordinate-wise square $L^{\circ 2}$ for some plane $L$. We make this explicit as follows: Consider the planar point configuration
  \[Z = \{[1:i:0], [1:-i:0], [1:0:i], [1:0:-i], [0:1:i]\} \subseteq \P^2,\]
  lying only on the conic $V(x^2+y^2+z^2)$. Let $L$ be the corresponding plane in $\P^4$, given as the image of
  \[\iota \colon \P^2 \hookrightarrow \P^4, \qquad [x:y:z] \mapsto [x+iy:x-iy:x+iz:x-iz:y+iz].\]
  Under the linear embedding
  \begin{align*}
    \psi \colon \P^4 &\hookrightarrow \P \Sym^2 \C^3, \\ 
    {\scriptsize [a:b:c:d:e]} &\mapsto {\scriptsize
      \begin{bmatrix}
        2(a+b+c+d) & 3i(-a+b) & 3i(-c+d) \\
        3i(-a+b) & 6(-2a-2b+c+d) & 3i(-a-b+c+d-2e) \\
        3i(-c+d) & 3i(-a-b+c+d-2e) & 6(a+b-2c-2d),
    \end{bmatrix}},
  \end{align*}
  the plane $L$ gets mapped into $Y$. Indeed, it is easily checked that a point $[x:y:z]$ gets mapped to the matrix $-4(x^2+y^2+z^2)\id+12(x,y,z)^T(x,y,z)$ under the composition $\psi \circ \iota \colon \P^2 \to \P \Sym^2 \C^3$; note that this matrix has a repeated eigenvalue. More precisely, \cref{prop:eigenvaluesEarly} shows that $Y$ is the projective cone over $\psi(L^{\circ 2})$ with the vertex $\id$.
\end{ex}

In \cref{ssec:squaringHighDim} we give an explicit set-theoretic description of the coordinate-wise square of a linear space in high-dimensional ambient space. We will show the following result as a special case of \cref{thm:uniqueQuadricCase}. Given a matrix $A \in \C^{s \times s}$, we denote a $2\times 2$ minor of $A$ by $A_{ij|k\ell}$ where $i,j$ are the rows and $k,\ell$ are the columns of the minor.

\begin{cor} \label{cor:eigenvalues}
  Let $s \geq 4$. A symmetric matrix $A \in \C^{s \times s}$ has an eigenspace 
  of codimension~$\leq 1$ if and only if its $2\times 2$-minors
  satisfy the following for $i,j,k,\ell \leq s$ distinct:
  \[A_{ij|k\ell} = 0, \qquad A_{ik|i\ell} = A_{jk|j\ell} \qquad \text{and} 
  \qquad A_{ik|ik} - A_{i\ell|i\ell} = A_{jk|jk} - A_{j \ell|j \ell}.\]
  These equations describe the Zariski closure in the complex vector space 
  $\Sym^2 \C^s$ of the set of real symmetric matrices with an eigenvalue 
  of multiplicity $\geq s-1$.
\end{cor}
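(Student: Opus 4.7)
The plan is to derive this corollary as a matrix-theoretic translation of \cref{thm:uniqueQuadricCase}, using \cref{prop:eigenvaluesEarly} as the bridge. Setting $k = s-1 \geq 3$, I would identify the Zariski closure of the set of real symmetric matrices with an eigenvalue of multiplicity $\geq s-1$ with the projective cone in $\P\Sym^2\C^s$ over $L^{\circ 2}$ with apex $[\id]$, for any $(s-1)$-dimensional linear space $L \subset \P^n$ whose point configuration $Z$ lies on a unique smooth quadric. Under this identification, $\P(\Sym^2 W/\langle\id\rangle)$ serves as the ambient space of $L^{\circ 2}$, and equations for the cone lift from equations for $L^{\circ 2}$ to polynomials on $\Sym^2 \C^s$ that are invariant under the shift $A \mapsto A + \lambda \id$ and vanish on the Veronese variety $\nu_2(\P W)$ of rank-one symmetric matrices.

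Next I would check that the three stated families meet these two criteria, and then invoke \cref{thm:uniqueQuadricCase} to see that they are sufficient. Each equation is a polynomial in the $2 \times 2$ minors of $A$, hence vanishes on rank-one matrices. For invariance under $A \mapsto A + \lambda \id$, note that $A_{ij|k\ell}$ with pairwise disjoint indices involves only off-diagonal entries, so it is already invariant; direct expansion gives $(A+\lambda\id)_{ik|i\ell} = A_{ik|i\ell} - \lambda\, a_{k\ell}$, so the difference $A_{ik|i\ell} - A_{jk|j\ell}$ is invariant; and $(A+\lambda\id)_{ik|ik} = A_{ik|ik} + \lambda(a_{ii}+a_{kk}) + \lambda^2$, so $(A_{ik|ik} - A_{i\ell|i\ell}) - (A_{jk|jk} - A_{j\ell|j\ell})$ is invariant as well. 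Thus each equation descends to $\P(\Sym^2 W/\langle\id\rangle)$ and vanishes on $L^{\circ 2}$. From the matrix perspective, these are precisely the relations obtained by eliminating $\lambda$ from the vanishing of all $2\times 2$ minors of $A - \lambda \id$, which set-theoretically is the locus where $A$ has an eigenspace of codimension $\leq 1$.

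The main obstacle is ensuring that the three families above constitute a \emph{complete} set of equations cutting out $L^{\circ 2}$ (equivalently, the cone $X_1$), rather than only a proper subvariety containing it. This is where \cref{thm:uniqueQuadricCase} does the decisive work: it lists explicit generators for the defining ideal of $L^{\circ 2}$ in the unique-smooth-quadric setting, and a careful bookkeeping matches each generator with one of the three listed matrix-minor expressions. Combined with \cref{prop:eigenvaluesEarly}, which already identifies the codimension-$\leq 1$ eigenspace variety with this cone, this yields the claimed equational description and the identification with the Zariski closure of the real locus.
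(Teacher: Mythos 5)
Your outline has the right ingredients, but the route you chose is more circuitous than the paper's and has one step that does not work as written. The paper proves the first equivalence directly, without going through the cone picture of \cref{prop:eigenvaluesEarly}: a complex symmetric matrix $A \in \C^{s\times s}$ has an eigenspace of codimension $\leq 1$ for eigenvalue $\lambda$ if and only if $A - \lambda\id$ has rank $\leq 1$, which is precisely membership in $V(J)$ for $s = k+1$ in the notation of \cref{prop:matrixCompletion}. It then cites \cref{lem:setTheoreticEquality} (which gives $V(J) = V(\mathcal X)$ set-theoretically) together with \cref{lem:linearIndependenceAmongSets} (which, for $s\geq 4$, shows $\mathcal H_1\cup\mathcal H_2 \subset (\mathcal E,\mathcal F,\mathcal G)$) to conclude that the vanishing of $\mathcal E \cup \mathcal F \cup \mathcal G$ characterizes $V(J)$; these sets are exactly the three stated families of minor relations. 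Only the second sentence of the corollary (the real Zariski closure statement) appeals to \cref{prop:eigenvaluesEarly}.

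The gap in your proposal is in the completeness step. You write that \cref{thm:uniqueQuadricCase} ``lists explicit generators'' for the defining ideal of $L^{\circ 2}$, and that ``careful bookkeeping'' matches them to your three families. But the theorem as stated only gives a \emph{count} of quadratic and cubic forms, not the forms themselves; the explicit equations live in the proof, specifically in the definitions of $\mathcal E$, $\mathcal F$, $\mathcal G$, $\mathcal H_1$, $\mathcal H_2$ and in \cref{lem:setTheoreticEquality} and \cref{lem:linearIndependenceAmongSets}. So to close the argument you would need to cite those lemmas directly (as the paper does), rather than the theorem statement. A further, smaller point: going through the projective cone is unnecessary for the first claim; the first claim is a statement about complex matrices and follows immediately from the rank-one-completion reformulation via $J = J_0 \cap \Cy$. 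Your invariance checks under $A \mapsto A + \lambda\id$ are correct in substance (note, though, that $(A+\lambda\id)_{ik\mid i\ell} = A_{ik\mid i\ell} + \lambda\,a_{k\ell}$, with a plus sign, though this does not affect the conclusion), but they only establish that the equations \emph{vanish} on the locus; the sufficiency direction still rests on the lemmas you would need to invoke.
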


\bigskip
\subsection{Squaring lines and planes}

 
In this subsection we consider the low-dimensional cases and classify the coordinate-wise 
squares of lines and planes in arbitrary ambient spaces.

\begin{thm}[Squaring lines] \label{thm:squaringLines}
  Let $L$ be a line in $\P^n$. 
  \begin{enumerate}[(i)]
    \item If $|Z| = 2$, then $L^{\circ 2}$ is a line in $\P^n$.
    \item If $|Z| > 2$, then $L^{\circ 2}$ is a smooth conic in $\P^n$.
  \end{enumerate}
\end{thm}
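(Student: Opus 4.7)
The plan is to apply \cref{lem:intrinsicDescription} with $k = 1$ and $r = 2$. Since $W$ is $2$-dimensional, $\Sym^2 W$ has dimension $3$, and the Veronese embedding $\nu_2 \colon \P W \hookrightarrow \P \Sym^2 W = \P^2$ realizes $\nu_2(\P W)$ as a smooth conic. Thus $L^{\circ 2}$ is, up to the linear re-embedding $\vartheta$, the image of this smooth conic under projection from $\P(I(Z)_2)$. The remaining bookkeeping is to compute $\dim I(Z)_2$: since a nonzero degree-$2$ form on $\P W^* \cong \P^1$ has at most two zeros, we get $\dim I(Z)_2 = \max(0,\, 3 - |Z|)$.

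For case (i), $|Z| = 2$ gives $\dim I(Z)_2 = 1$, spanned by $\ell_i \ell_j \in \Sym^2 W$ for $[\ell_i] \neq [\ell_j]$ the two points of $Z$. The key step is a rank computation: this element is a rank-$2$ symmetric tensor because $\ell_i$ and $\ell_j$ are linearly independent, whereas $\nu_2(\P W)$ consists of rank-$1$ symmetric tensors (the squares). Hence the projection center $p := [\ell_i \ell_j]$ lies off the smooth conic, and the classical fact that projecting a smooth plane conic from a point not on it gives a degree-$2$ surjection onto $\P^1$ shows that $\psi(\P W) = \P(\Sym^2 W / I(Z)_2) \cong \P^1$. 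Composing with the linear embedding $\vartheta$ realizes $L^{\circ 2}$ as a line in $\P^n$.

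For case (ii), $|Z| \geq 3$ forces $I(Z)_2 = 0$, so the projection $\pi$ is the identity and $\psi = \nu_2$. Then $L^{\circ 2} = \vartheta(\nu_2(\P W))$ is a linear re-embedding of the smooth conic $\nu_2(\P W) \subset \P^2$; since linear embeddings of projective spaces preserve smoothness and degree, $L^{\circ 2}$ is a smooth conic lying in a plane of $\P^n$.

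The main subtlety I expect is in case (i): one must carefully identify the single (up to scaling) element spanning $I(Z)_2 \subset \Sym^2 W$ with $\ell_i \ell_j$ under the convention of \cref{lem:intrinsicDescription}, and then confirm it is a rank-$2$ rather than rank-$1$ symmetric tensor. Once that is in place, everything else is a direct consequence of \cref{lem:intrinsicDescription} together with standard facts about projecting smooth conics and the invariance of degree and smoothness under linear re-embeddings. As a sanity check, the degree formula \cref{prop:LinearDegreeFormula} gives $\deg L^{\circ 2} = 1$ when the linear matroid has two components (the case $|Z|=2$) and $\deg L^{\circ 2} = 2$ when it has a single component (the case $|Z| \geq 3$), consistent with the two conclusions.
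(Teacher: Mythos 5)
Your proposal is correct and rests on exactly the same key tool the paper uses, namely \cref{lem:intrinsicDescription}, with the same split into $I(Z)_2 = 0$ (giving a re-embedded Veronese conic) and $\dim I(Z)_2 = 1$ (giving a line). The only divergence is in how case~(i) is closed out: you analyze the projection explicitly (show the center is a rank-$2$ tensor off the conic, then invoke that projecting a smooth conic from such a point is a $2{:}1$ surjection onto $\P^1$), whereas the paper simply observes that by the lemma $L^{\circ 2}$ is contained in the line $\vartheta\bigl(\P(\Sym^2 W/I(Z)_2)\bigr)$, and since $\varphi_2$ is finite, $\dim L^{\circ 2} = \dim L = 1$, forcing equality. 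The paper's closing step is a bit cleaner and, note, your rank-$2$ verification is actually redundant: the lemma already guarantees $\nu_2(\P W) \cap \P(I(Z)_2) = \emptyset$ because $\psi$ is asserted to be a morphism. One small notational slip worth fixing: the element of $I(Z)_2 \subset \Sym^2 W$ you exhibit should not be $\ell_i\ell_j$ (that lives in $\Sym^2 W^*$); it is the product $e_i e_j$ of the dual basis vectors in $W$, which is indeed rank~$2$, so the substance of your claim stands.
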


\begin{proof}
  Since $Z \subset \P W^*$ spans the projective line $\P W^*$, we must have 
  $|Z| \geq 2$.
  
  If $|Z| > 2$, then $I(Z)_2 = 0$, since no non-zero quadratic form on the 
  projective line $\P W^*$ vanishes on all points of $Z$. Then 
  \cref{lem:intrinsicDescription} implies that $L^{\circ 2} = (\varphi_2 \circ 
  \iota)(\P W) $ is a linear re-embedding of $\nu_2(\P W) $, which is a smooth 
  conic in the plane $\P \Sym^2 W \cong \P^2$.
  
  If $|Z| = 2$, then $\dim I(Z)_2 = 1$, since up to scaling there is a unique 
  quadric vanishing on the points $Z $. By 
  \cref{lem:intrinsicDescription}, the image $\varphi_2(L)$ lies in a 
  projective line $\P^1 \cong \vartheta(\P(\Sym^2 W/I(Z)_2)) \subset \P^n$. On 
  the other hand $\dim L^{\circ 2} = \dim L = 1$. Hence, $L^{\circ 2} = 
  \varphi_2(L)$ is a line 
  in $\P^n$.
\end{proof}

\begin{remark} \label{rem:lineMatroidInvariant}
  We observe that the two possibilities in \cref{thm:squaringLines} 
  for the coordinate-wise square of a line $L$ differ in degree. 
  In particular, \cref{cor:DegreeMatroidInvariant} shows that it only depends 
  on the linear matroid $\mathcal M_L$ whether $L^{\circ 2} $ is a 
  line or a (re-embedded) plane conic.
\end{remark}

\begin{remark}
  In the Grassmannian of lines $\Gr(1, \P^n)$, consider the locus $\Gamma 
  \subset \Gr(1, \P^n)$ of those lines $L $ whose coordinate-wise 
  square $L^{\circ 2}$ is a line. 
  Considering Plücker coordinates $p_{ij}$ on the Grassmannian $\Gr(1, \P^n)$, 
  we observe that $\Gamma$ is the subvariety of $\Gr(1, \P^n)$ given by the 
  vanishing of $p_{ij} p_{jk} p_{ki}$ for all $i,j,k \in \{0,1, \ldots, n\}$ 
  distinct:
    \[\Gamma = V(p_{ij} p_{jk} p_{ki} \mid i,j,k \in \{0,1, \ldots, n\} \text{ 
    distinct}) \subset \Gr(1, \P^n).\]
  Indeed, if $L$ is the image of an embedding $\P^1 \xhookrightarrow{B} 
  \P^n$ given by a chosen rank~$2$ matrix $B \in \C^{(n+1)\times 2}$, then $Z 
  \subset (\P^1)^*$ is the set of points corresponding to the non-zero rows of 
  $B$. Then $|Z|=2$ if and only if among any three distinct rows of $B$ there 
  always exist two linearly dependent rows. In terms of the Plücker 
  coordinates, which are given by the $2 \times 2$-minors of $B$, this 
  translates into the vanishing condition above.
\end{remark}


\begin{thm}[Squaring planes] \label{thm:squaringPlanes}
  Let $L$ be a plane in $\P^n$. The defining ideal $I \subset \Cx$ of $L^{\circ 
  2}$ depends on the geometry of the planar configuration of $Z\subset \P W^*$ as follows (see 
  Figure~\ref{fig:PointConfiguration}):
  \begin{enumerate}[(i)]
    \item If $Z$ is not contained in any conic, then $I$ is 
    minimally generated by $n-5$ linear forms and 6~quadratic forms.
    \item If $Z$ is contained in a unique conic $Q \subset \P W^*$, we 
    distinguish two cases:
          \begin{enumerate}[(a)]
            \item If $Q$ is irreducible, then $I$ is minimally 
            generated by 
            $n-4$ linear forms and 7 cubic forms.
            \item If $Q$ is reducible, then $L^{\circ 2}$ is the complete 
            intersection of $n-4$ hyperplanes and 2~quadrics.
          \end{enumerate}
    \item If $Z$ is contained in several conics, we distinguish three cases:
          \begin{enumerate}[(a)]
            \item If $|Z| = 3$, then $I$ is minimally generated by 
            $n-2$ linear forms.
            \item If $|Z| = 4$ and no three points of $Z$ are collinear, then 
            $I$ is minimally generated by $n-3$ linear forms and 
            one quartic form.
            \item If $|Z| \geq 3$ and all but one of the points of $Z$ lie on a line,
            then $I$ is minimally generated by $n-3$ linear forms and 
            one quadratic form.
          \end{enumerate}
  \end{enumerate}
\end{thm}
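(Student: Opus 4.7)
\emph{Approach.} The main tool is \cref{lem:intrinsicDescription}: up to a linear re-embedding in $\P^n$, $L^{\circ 2}$ is the image of the Veronese surface $V := \nu_2(\P W) \subset \P\Sym^2 W \cong \P^5$ under the linear projection from the subspace $\P(I(Z)_2) \subset \P^5$. Identifying $\P \Sym^2 W$ with projectivised symmetric $3 \times 3$ matrices, the ideal of $V$ is minimally generated by the six $2\times 2$ minors of the generic symmetric matrix, and the secant variety of $V$ is the cubic determinantal hypersurface $\Sigma \subset \P^5$ of matrices of rank at most $2$. The classification then amounts to computing, in each case, the dimension of $I(Z)_2$ together with the position of $\P(I(Z)_2)$ relative to $V$ and $\Sigma$, and analysing the resulting projection of the Veronese.

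\emph{Case-by-case outline.} In case (i), $I(Z)_2 = 0$, so the projection is trivial; the ideal of $L^{\circ 2}$ consists of the six Veronese quadrics together with $n-5$ linear forms cutting $\P^5$ out of $\P^n$. In case (ii) the centre is a single point $[Q]$, and since $Z$ spans $\P W^*$ the conic $Q$ is not a double line. If $Q$ is smooth (case (a)), $[Q]$ lies off $\Sigma$, and the projection is an isomorphic embedding of $V$ into $\P^4$; I would verify that the elimination of one variable from the six minors (after normalising $Q$ to the identity) yields $7$ minimal cubic generators. If $Q$ is reducible (case (b)), $[Q] \in \Sigma \setminus V$; after normalising $Q$ to a product of two coordinate forms, one checks that only two independent quadratic relations descend to $\P^4$ and that they cut $\pi(V)$ out as a complete intersection of the expected dimension and degree. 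In case (iii)(a), three non-collinear points impose independent conditions on conics, so $\dim I(Z)_2 = 3$ and the projection is onto $\P^2$; a dimension count then gives that $L^{\circ 2}$ is a linearly embedded $\P^2$. In case (iii)(b), four points in general position give a pencil of conics with exactly three reducible members corresponding to the three pairings, so $\P(I(Z)_2)$ is a $\P^1$ meeting $\Sigma$ transversally in three rank-$2$ points and disjoint from $V$; projection yields a Steiner-type quartic hypersurface in $\P^3$. Finally, in case (iii)(c) every conic through $Z$ factors as $\ell \cup \ell'$ with $\ell$ the common line carrying $|Z|-1$ points, so the whole centre $\P(I(Z)_2)$ lies in $\Sigma$; working on a concrete coordinate model such as $L = \{[v_1 : v_2 : v_1 + v_2 : \ldots]\}$, a direct elimination shows the image is a quadric surface in $\P^3$.

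\emph{Minimality and main obstacle.} To pin down the stated minimal number of generators in each degree, I would complement the geometric analysis by a Hilbert-function computation: the degree and dimension of each $\pi(V)$ (available from \cref{prop:LinearDegreeFormula} and classical Veronese facts) determine its Hilbert polynomial, and comparing with the Hilbert series of the candidate ideal forces the stated counts of linear, quadratic, cubic and quartic generators. The most delicate steps are cases (ii)(b) and (iii)(c), where the projection centre meets $\Sigma$: there the elimination is non-generic, several of the six Veronese minors become syzygetically redundant after projection, and one must verify that the surviving equations define $L^{\circ 2}$ scheme-theoretically and not merely set-theoretically. Case (iii)(c) is the subtlest, because the entire $\P^1$ of the centre is contained in $\Sigma$, which is precisely what causes the degree of the image to drop from the generic value of four to two.
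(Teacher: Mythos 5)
Your proposal follows essentially the same approach as the paper: both rely on \cref{lem:intrinsicDescription} to realise $L^{\circ 2}$ as a linear projection of the Veronese surface $\nu_2(\P W)\subset\P^5$ from the centre $\P(I(Z)_2)$, then sort cases by $\dim I(Z)_2$ and the geometry of the centre. Your reframing in terms of the secant variety $\Sigma$ (the cubic symmetroid of rank~$\le 2$ quadrics) is a genuinely illuminating addition --- it explains conceptually why case (ii)(b) and case (iii)(c) behave differently from their generic counterparts, namely because the projection centre meets or lies inside $\Sigma$, where the paper simply substitutes concrete normal forms and computes. The case-by-case geometry you describe is correct (including the observation that $Z$ spanning $\P W^*$ rules out double lines in case (ii), and that in (iii)(c) the entire $\P^1$-centre sits in $\Sigma$, forcing the degree to drop).

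The soft spot is the same one the paper hits: nailing down the \emph{minimal} number of generators in cases (ii)(a), (ii)(b), and the sporadic (iii) cases. The paper resolves (ii)(a) by a \texttt{Macaulay2} computation and defers the structural description to \cref{thm:uniqueQuadricCase}; you propose a Hilbert-function comparison instead. This would work in principle, but be aware that the Hilbert function of $\pi(V)$ is not determined by degree and dimension alone --- you would need to compute it from the coordinate ring of the normalisation or from the concrete parametrisation, which is essentially the same elimination the paper performs. So your plan is a legitimate alternative verification route, but at the point where you write ``I would verify that the elimination~\ldots~yields $7$ minimal cubic generators,'' you have reached the same step the paper closes computationally, not sidestepped it. For the reductions in (iii)(b) and (iii)(c), the paper picks explicit models $L'\subset\P^3$ (the quartic Steiner surface of \cref{ex:quarticSingularSurface} and the quadric $V(x_1+x_2-x_3)^{\circ 2}$) and notes that $I(Z)_2=I(Z')_2$; your parallel argument via $\Sigma$ is sound. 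No gap in approach --- only the concrete minimality verification remains to be filled in, exactly as in the paper.
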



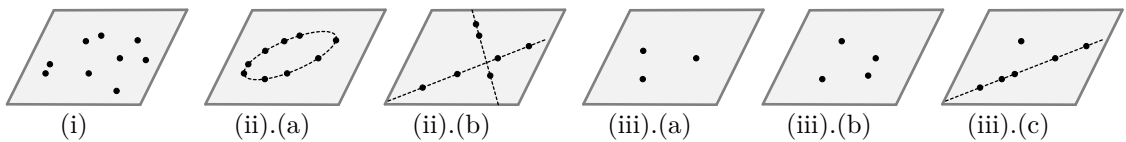
\begin{figure}[h]
	\hspace{-0.3cm}\begin{tabular}{c@{\hskip 0.6cm}cc@{\hskip 0.6cm}ccc}
		\hspace{-0.5cm} {\definecolor{yqyqyq}{rgb}{0.5019607843137255,0.5019607843137255,0.5019607843137255}
			\definecolor{dgr}{rgb}{0.25,0.25,0.25}
			\begin{tikzpicture}[line cap=round,line join=round,>=triangle 
			45,x=1.0cm,y=1.0cm,scale=0.2]
			\clip(-4.5,-2) rectangle (7.8,5);
			\fill[line width=1.pt,color=yqyqyq,fill=yqyqyq,fill 
			opacity=0.10000000149011612] (-1.2608503781345857,4.700842335843258) -- 
			(-4.345209896276158,-1.528953126541707) -- 
			(4.419257249334061,-1.5594913395926135) -- 
			(7.50361676748,4.7313805488941645) -- cycle;
			\draw [line width=1.pt,color=yqyqyq] (-1.2608503781345857,4.700842335843258)-- 
			(-4.345209896276158,-1.528953126541707);
			\draw [line width=1.pt,color=yqyqyq] (-4.345209896276158,-1.528953126541707)-- 
			(4.419257249334061,-1.5594913395926135);
			\draw [line width=1.pt,color=yqyqyq] (4.419257249334061,-1.5594913395926135)-- 
			(7.50361676748,4.7313805488941645);
			\draw [line width=1.pt,color=yqyqyq] (7.50361676748,4.7313805488941645)-- 
			(-1.2608503781345857,4.700842335843258);
			\begin{scriptsize}
			\draw [fill=black] (1.0295156006834159,0.5152960133926188) circle (5pt);
			\draw [fill=black] (3.08,1.52) circle (5pt);
			\draw [fill=black] (4.785715805944938,1.400904191868913) circle (5pt);
			\draw [fill=black] (1.8540473530578965,3.0194294835669675) circle (5pt);
			\draw [fill=black] (-1.810538213050906,0.5152960133926188) circle (5pt);
			\draw [fill=black] (2.861808383737817,-0.6451560825418355) circle (5pt);
			\draw [fill=black] (4.247836976757553,2.7093004463893333) circle (5pt);
			\draw [fill=black] (0.8430402090918784,2.6504732895597267) circle (5pt);
			\draw [fill=black] (-1.5240915019634127,1.123094162197016) circle (5pt);
			\end{scriptsize}
			\end{tikzpicture}} \hspace{-0.5cm} & \hspace{-0.5cm} 
		{\definecolor{yqyqyq}{rgb}{0.5019607843137255,0.5019607843137255,0.5019607843137255}
			\definecolor{dgr}{rgb}{0.25,0.25,0.25}
			\begin{tikzpicture}[line cap=round,line join=round,>=triangle 
			45,x=1cm,y=1cm,scale=0.2]
			\clip(-4.5,-2) rectangle (7.8,5);
			\fill[line width=1.pt,color=yqyqyq,fill=yqyqyq,fill 
			opacity=0.10000000149011612] (-1.2608503781345857,4.700842335843258) -- 
			(-4.345209896276158,-1.528953126541707) -- 
			(4.419257249334061,-1.5594913395926135) -- 
			(7.50361676748,4.7313805488941645) -- cycle;
			\draw [line width=1.pt,color=yqyqyq] (-1.2608503781345857,4.700842335843258)-- 
			(-4.345209896276158,-1.528953126541707);
			\draw [line width=1.pt,color=yqyqyq] (-4.345209896276158,-1.528953126541707)-- 
			(4.419257249334061,-1.5594913395926135);
			\draw [line width=1.pt,color=yqyqyq] (4.419257249334061,-1.5594913395926135)-- 
			(7.50361676748,4.7313805488941645);
			\draw [line width=1.pt,color=yqyqyq] (7.50361676748,4.7313805488941645)-- 
			(-1.2608503781345857,4.700842335843258);
			\draw [dash pattern=on 1pt off 1pt, rotate 
			around={-156.8071774809104:(1.2292054707106181,1.697331258300542)},line 
			width=0.5pt] (1.2292054707106181,1.697331258300542) ellipse 
			(3.2786335829076863cm and 1.0282157046040252cm);
			\begin{scriptsize}
			\draw [fill=black] (1.0295156006834159,0.5152960133926188) circle (5pt);
			\draw [fill=black] (3.08,1.52) circle (5pt);
			\draw [fill=black] (1.8540473530578965,3.0194294835669675) circle (5pt);
			\draw [fill=black] (-0.40578041270919835,2.0116684528870468) circle (5pt);
			\draw [fill=black] (-1.810538213050906,0.5152960133926188) circle (5pt);
			\draw [fill=black] (-0.4357499009679917,0.13785006719710813) circle (5pt);
			\draw [fill=black] (4.247836976757553,2.7093004463893333) circle (5pt);
			\draw [fill=black] (0.8430402090918784,2.6504732895597267) circle (5pt);
			\draw [fill=black] (-1.5240915019634118,1.123094162197017) circle (5pt);
			\end{scriptsize}
			\end{tikzpicture}} \hspace{-0.5cm} & 
		\hspace{-0.5cm} {\definecolor{yqyqyq}{rgb}{0.5019607843137255,0.5019607843137255,0.5019607843137255}
			\definecolor{dgr}{rgb}{0.25,0.25,0.25}
			\begin{tikzpicture}[line cap=round,line join=round,>=triangle 
			45,x=1cm,y=1cm,scale=0.2]
			\clip(-4.5,-2) rectangle (7.8,5);
			\fill[line width=1.pt,color=yqyqyq,fill=yqyqyq,fill 
			opacity=0.10000000149011612] (-1.2608503781345857,4.700842335843258) -- 
			(-4.345209896276158,-1.528953126541707) -- 
			(4.419257249334061,-1.5594913395926135) -- 
			(7.50361676748,4.7313805488941645) -- cycle;
			\draw [line width=1.pt,color=yqyqyq] (-1.2608503781345857,4.700842335843258)-- 
			(-4.345209896276158,-1.528953126541707);
			\draw [line width=1.pt,color=yqyqyq] (-4.345209896276158,-1.528953126541707)-- 
			(4.419257249334061,-1.5594913395926135);
			\draw [line width=1.pt,color=yqyqyq] (4.419257249334061,-1.5594913395926135)-- 
			(7.50361676748,4.7313805488941645);
			\draw [line width=1.pt,color=yqyqyq] (7.50361676748,4.7313805488941645)-- 
			(-1.2608503781345857,4.700842335843258);
			\draw [line width=0.5pt,dash pattern=on 1pt off 1pt] 
			(1.3889599378045718,4.710656448124513)-- 
			(3.1119979790807197,-1.554936429243299);
			\draw [line width=0.5pt,dash pattern=on 1pt off 1pt] 
			(6.17037763999702,2.7349229517951223)-- 
			(-4.265369750415246,-1.3676918418325394);
			\begin{scriptsize}
			\draw [fill=black] (3.08,1.52) circle (5pt);
			\draw [fill=black] (1.8540473530578965,3.0194294835669675) circle (5pt);
			\draw [fill=black] (0.4165053550709401,0.47289794547218034) circle (5pt);
			\draw [fill=black] (-1.866046034530464,-0.42444354314966637) circle (5pt);
			\draw [fill=black] (5.126600968223972,2.324582086435178) circle (5pt);
			\draw [fill=black] (2.5826748709118235,0.3698748731890493) circle (5pt);
			\draw [fill=black] (1.6456283614483025,3.7773167257836735) circle (5pt);
			\end{scriptsize}
			\end{tikzpicture}} \hspace{-0.5cm} & \hspace{-0.5cm} 
		{\definecolor{yqyqyq}{rgb}{0.5019607843137255,0.5019607843137255,0.5019607843137255}
			\definecolor{dgr}{rgb}{0.25,0.25,0.25}
			\begin{tikzpicture}[line cap=round,line join=round,>=triangle 
			45,x=1.0cm,y=1.0cm,scale=0.2]
			\clip(-4.5,-2) rectangle (7.8,5);
			\fill[line width=1.pt,color=yqyqyq,fill=yqyqyq,fill 
			opacity=0.10000000149011612] (-1.2608503781345857,4.700842335843258) -- 
			(-4.345209896276158,-1.528953126541707) -- 
			(4.419257249334061,-1.5594913395926135) -- 
			(7.50361676748,4.7313805488941645) -- cycle;
			\draw [line width=1.pt,color=yqyqyq] (-1.2608503781345857,4.700842335843258)-- 
			(-4.345209896276158,-1.528953126541707);
			\draw [line width=1.pt,color=yqyqyq] (-4.345209896276158,-1.528953126541707)-- 
			(4.419257249334061,-1.5594913395926135);
			\draw [line width=1.pt,color=yqyqyq] (4.419257249334061,-1.5594913395926135)-- 
			(7.50361676748,4.7313805488941645);
			\draw [line width=1.pt,color=yqyqyq] (7.50361676748,4.7313805488941645)-- 
			(-1.2608503781345857,4.700842335843258);
			\begin{scriptsize}
			\draw [fill=black] (3.08,1.52) circle (5pt);
			\draw [fill=black] (-0.40578041270919835,2.0116684528870468) circle (5pt);
			\draw [fill=black] (-0.4357499009679917,0.13785006719710813) circle (5pt);
			\end{scriptsize}
			\end{tikzpicture}} \hspace{-0.5cm} & 
		\hspace{-0.5cm} {\definecolor{yqyqyq}{rgb}{0.5019607843137255,0.5019607843137255,0.5019607843137255}
			\definecolor{dgr}{rgb}{0.25,0.25,0.25}
			\begin{tikzpicture}[line cap=round,line join=round,>=triangle 
			45,x=1.0cm,y=1.0cm,scale=0.2]
			\clip(-4.5,-2) rectangle (7.8,5);
			\fill[line width=1.pt,color=yqyqyq,fill=yqyqyq,fill 
			opacity=0.10000000149011612] (-1.2608503781345857,4.700842335843258) -- 
			(-4.345209896276158,-1.528953126541707) -- 
			(4.419257249334061,-1.5594913395926135) -- 
			(7.50361676748,4.7313805488941645) -- cycle;
			\draw [line width=1.pt,color=yqyqyq] (-1.2608503781345857,4.700842335843258)-- 
			(-4.345209896276158,-1.528953126541707);
			\draw [line width=1.pt,color=yqyqyq] (-4.345209896276158,-1.528953126541707)-- 
			(4.419257249334061,-1.5594913395926135);
			\draw [line width=1.pt,color=yqyqyq] (4.419257249334061,-1.5594913395926135)-- 
			(7.50361676748,4.7313805488941645);
			\draw [line width=1.pt,color=yqyqyq] (7.50361676748,4.7313805488941645)-- 
			(-1.2608503781345857,4.700842335843258);
			\begin{scriptsize}
			\draw [fill=black] (3.08,1.52) circle (5pt);
			\draw [fill=black] (-0.4357499009679917,0.13785006719710813) circle (5pt);
			\draw [fill=black] (0.8430402090918784,2.6504732895597267) circle (5pt);
			\draw [fill=black] (2.5826748709118235,0.3698748731890493) circle (5pt);
			\end{scriptsize}
			\end{tikzpicture}} \hspace{-0.5cm} & \hspace{-0.5cm} 
		{\definecolor{yqyqyq}{rgb}{0.5019607843137255,0.5019607843137255,0.5019607843137255}
			\definecolor{dgr}{rgb}{0.25,0.25,0.25}
			\begin{tikzpicture}[line cap=round,line join=round,>=triangle 
			45,x=1.0cm,y=1.0cm,scale=0.2]
			\clip(-4.5,-2) rectangle (7.8,5);
			\fill[line width=1.pt,color=yqyqyq,fill=yqyqyq,fill 
			opacity=0.10000000149011612] (-1.2608503781345857,4.700842335843258) -- 
			(-4.345209896276158,-1.528953126541707) -- 
			(4.419257249334061,-1.5594913395926135) -- 
			(7.50361676748,4.7313805488941645) -- cycle;
			\draw [line width=1.pt,color=yqyqyq] (-1.2608503781345857,4.700842335843258)-- 
			(-4.345209896276158,-1.528953126541707);
			\draw [line width=1.pt,color=yqyqyq] (-4.345209896276158,-1.528953126541707)-- 
			(4.419257249334061,-1.5594913395926135);
			\draw [line width=1.pt,color=yqyqyq] (4.419257249334061,-1.5594913395926135)-- 
			(7.50361676748,4.7313805488941645);
			\draw [line width=1.pt,color=yqyqyq] (7.50361676748,4.7313805488941645)-- 
			(-1.2608503781345857,4.700842335843258);
			\draw [line width=0.5pt,dash pattern=on 1pt off 1pt] 
			(6.17037763999702,2.7349229517951223)-- 
			(-4.265369750415246,-1.3676918418325394);
			\begin{scriptsize}
			\draw [fill=black] (3.08,1.52) circle (5pt);
			\draw [fill=black] (-0.4357499009679917,0.13785006719710813) circle (5pt);
			\draw [fill=black] (0.8430402090918784,2.6504732895597267) circle (5pt);
			\draw [fill=black] (0.4165053550709401,0.4728979454721804) circle (5pt);
			\draw [fill=black] (-1.866046034530464,-0.42444354314966637) circle (5pt);
			\draw [fill=black] (5.126600968223972,2.3245820864351776) circle (5pt);
			\end{scriptsize}
			\end{tikzpicture}} \hspace{-0.5cm} \\[-0.2cm]
		\hspace{-0.5cm} \footnotesize (i) & \hspace{-0.5cm} \footnotesize (ii).(a) & 
		\hspace{-0.5cm} \footnotesize 
		(ii).(b) & 
		\hspace{-0.5cm} \footnotesize (iii).(a) & \hspace{-0.5cm} \footnotesize 
		(iii).(b) & \hspace{-0.5cm} \footnotesize (iii).(c)
	\end{tabular}
	\caption{Dependence of $L^{\circ 2}$ on the planar point configuration $Z$}
	\label{fig:PointConfiguration}
\end{figure}

\begin{proof}
  Notice that $k=2$, so $\dim W = 3$.
  \begin{enumerate}[(i)]
    \item 
    If $I(Z)_2 = 0$, then $L^{\circ 2} \subset \P^n$ is by 
    \cref{lem:intrinsicDescription} a linear re-embedding of the 
    Veronese surface $\nu_2(\P W) \subset \P \Sym^2 W$. The ideal of the 
    $\nu_2(\P W)$ is minimally generated by six quadrics. 
    Indeed, choosing a basis for $W$, we may understand points in $\P \Sym^2 W$ 
    as symmetric $3\times 3$-matrices up to scaling. Then $\nu_2(\P W)$ is the 
    subvariety corresponding to symmetric rank~1 matrices, which is the vanishing set of the six quadratic polynomials corresponding to the $2 \times 2$-minors. 
    Since $\dim \P \Sym^2 W = 5$, the linear re-embedding $\P \Sym^2 W 
    \hookrightarrow \P^n$ adds $n-5$ linear forms to $I$.
    
    \item We can choose 
    a basis $\{z_0,z_1,z_2\}$ of $W$
    such that the unique reduced plane conic through $Z \subset \P W^*$ is 
    with respect to these coordinates
    given by the vanishing of either $q_1 := z_0^2-2z_1 z_2 \in \Sym^2 W$ 
    or $q_2 := z_1 z_2 \in \Sym^2 W$.
    
    We consider the basis $\{z_1^2, z_2^2, 2z_0 z_1, 2z_0 z_2, 2z_1 z_2\}$ of 
    $\Sym^2 W/\langle q_1\rangle$ and the basis $\{z_0^2, z_1^2, z_2^2, 2z_0 
    z_1, 2z_0 z_2\}$ of $\Sym^2 W/\langle q_2\rangle$.
    With respect to these choices of bases, the morphism $\psi\colon \P W \to 
    \P(\Sym^2 W/I(Z)_2)$ is given as
    \begin{align*}
      &\psi\colon \P^2 \to \P^4, \quad [a_0:a_1:a_2] \mapsto [a_1^2 : a_2^2 : 
      a_0 a_1 : a_0 a_2 : a_0^2+a_1a_2] \\
      \text{or} \qquad &\psi\colon \P^2 \to \P^4, \quad [a_0:a_1:a_2] \mapsto 
      [a_0^2 : a_1^2 : a_2^2 : a_0 a_1 : a_0 a_2].
    \end{align*}
    In the first case, we checked computationally with 
    \texttt{Macaulay2} \cite{M2} that the ideal is minimally generated by seven 
    cubics. A 
    structural description of these quadrics and cubics will be given in 
    the 
    proof of \cref{thm:uniqueQuadricCase}. The image of the second morphism is 
    a complete intersection of two binomial 
    quadrics. By 
    \cref{lem:intrinsicDescription}, the coordinate-wise 
    square $L^{\circ 2}$ 
    arises from the image of $\psi$ via a linear re-embedding $\P^4 
    \hookrightarrow \P^n$, producing additional $n-4$~linear forms in $I$.
    
    \item  In case~(a), the set $Z$ consists of three points 
    spanning the projective plane $\P W^*$, so $\dim \Sym^2 W/I(Z)_2 = 3$. 
    Then by \cref{lem:intrinsicDescription}, the coordinate-wise square 
    $L^{\circ 2}$ is contained in a plane $\P^2 \cong \vartheta(\P(\Sym^2 
    W/I(Z)_2)) 
    \subset \P^n$. On the other hand, $\dim L^{\circ 2} = \dim L = 2$, so 
    $L^{\circ 2} \subset \P^n$ must be a 
    plane in $\P^n$.
    
    For case~(b), we may assume that 
     \[Z = \{[1:0:0],[0:1:0],[0:0:1],[-1:-1:-1]\}\]
    for a suitably chosen basis $\{\ell_0,\ell_1,\ell_2\}$ of $W^*$. By 
    \cref{lem:intrinsicDescription}, $L^{\circ 2} \subset \P^n$ is a linear 
    re-embedding of the image of $\psi \colon \P W \to \P(\Sym^2 
    W/I(Z)_2)$.
    On the other hand, the plane $L' := V(x_0+x_1+x_2+x_3) \subset \P^3$ is the 
    image of
    $\P W \xhookrightarrow{[\ell_0:\ell_1:\ell_2:-\ell_0-\ell_1-\ell_2]} \P^3$, 
    so $Z$ can also be viewed as the finite set of points associated to 
    $L'$. Applying 
    \cref{lem:intrinsicDescription} 
    to $L' \subset \P^3$ shows that the image of 
    $\psi \colon \P W \to \P(\Sym^2 W/I(Z)_2)$ is the coordinate-wise square 
    ${L'}^{\circ 2} \subset \P^3$. 
    Hence, $L^{\circ 2} \subset \P^n$ is a linear re-embedding of the quartic 
    surface from \cref{ex:quarticSingularSurface} into higher dimension.
    
    Finally, we consider case~(c). Consider three points $p_1,p_2,p_3 \in 
    Z$ lying on a line $T \subset \P W^*$. Then $T$ must be an irreducible 
    component of each conic through $Z$. Since $Z$ spans the projective plane 
    $\P W^*$, there must also be a point $p_0 \in Z$ outside of $T$. All points 
    in $Z \setminus \{p_0\}$ must lie on the line $T$, as otherwise there could 
    be at most one conic passing through $Z$. If $Z' := \{p_0,p_1,p_2,p_3\} 
    \subset Z$, then each conic passing through $Z'$ also passes through $Z$, 
    i.e.\ $I(Z)_2 = I(Z')_2$.
    
    We may choose a basis $z_0,z_1,z_2$ of $W$ such that $Z' \subset \P W^*$ 
    with respect to these coordinates is given by
      \[Z' = \{[1:0:0],[0:1:0],[0:0:1],[0:1:1]\}.\]
    The plane $L' := V(x_1+x_2-x_3) \subset \P^3$ is the image of 
    $\P^2 \xhookrightarrow{[z_0:z_1:z_2:z_1+z_2]} \P^3$, so $Z'$ can be viewed 
    as the finite set of points associated to $L'$.
    \cref{lem:intrinsicDescription} shows 
    that ${L'}^{\circ 2} \subset \P^3$ coincides with the 
    image of the morphism $\psi \colon \P W \to \P(\Sym^2 W/I(Z')_2)$. On the 
    other hand, 
    \cref{lem:intrinsicDescription} shows that 
    $L^{\circ 2} \subset \P^n$ is a linear re-embedding of $\P W \to \P(\Sym^2 
    W/I(Z)_2)$. From $I(Z)_2 = I(Z')_2$, we deduce that $L^{\circ 
    2} \subset \P^n$ is a linear re-embedding of
    the quadratic surface
    \[{L'}^{\circ 2} = V(x_1+x_2-x_3)^{\circ 2} = V(x_1^2+x_2^2+x_3^2-2x_1 
    x_2-2x_2 x_3 
    -2x_3x_1) \subset \P^3,\]
    as we compute from \cref{prop:hypersurfaces}.
    \qedhere
  \end{enumerate}
\end{proof}

%

\begin{remark} \label{rem:planeNoMatroidInvariant}
  Opposed to \cref{rem:lineMatroidInvariant}, the structure of the 
  coordinate-wise square of a plane $L \subset \P^n$ does \emph{not} only 
  depend on the linear matroid of $L$: For $n=5$, it can happen 
  both in case~(i) and case~(ii).(a) of \cref{thm:squaringPlanes} that 
  $\mathcal M_L = \{I \subset \{0,1,\ldots,5\} \mid |I| \leq 3\}$.
\end{remark}

\bigskip
\subsection{Squaring in high ambient dimensions} \label{ssec:squaringHighDim}
Consider the case of $k$-dimensional linear spaces in $\P^n$ for 
$n \gg k$. For a \emph{general} linear space $L \in \Gr(k,\P^n)$, the 
finite set of points $Z$ does not lie on a quadric. We know from 
\cref{prop:NoQuadrics} that the coordinate-wise square $L^{\circ 2}$ is a 
linear 
re-embedding of the $k$-dimensional second Veronese variety. 
In this subsection, we investigate the first degenerate 
case where the point configuration $Z$ is a unique quadric. 

The following theorem gives the structure of coordinate-wise squares as the one appearing in 
\cref{prop:eigenvaluesEarly}. We will also prove \cref{cor:eigenvalues} by deriving the polynomials 
vanishing on the set of symmetric matrices with a comultiplicity~$1$ 
eigenvalue. \cref{prop:eigenvaluesEarly} shows that \cref{cor:eigenvalues} is a special case of the theorem stated below.

\begin{thm} \label{thm:uniqueQuadricCase}
  Let $L \subset \P^n$ be linear space of dimension $k$.
  If the point configuration $Z$ lies on a unique quadric of rank $s$, then 
  $L^{\circ 2}$ can set-theoretically be described as the vanishing set of $n-\binom{k+2}{2}+2$ 
  linear forms and
  \[\begin{cases}
      (k+3)(k+2)(k+1)(k-2)/12 \text{ quadratic forms}, &\text{if } s \geq 4, \\
      (k+3)(k+2)(k+1)(k-2)/12 \text{ quadratic and 7 cubic forms}, &\text{if }
      s = 3, \\
      (k+3)(k+2)(k+1)(k-2)/12 + 2 \text{ quadratic forms}, &\text{if } s = 2.
   \end{cases}\]
\end{thm}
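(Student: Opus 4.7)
The plan is to apply Lemma \ref{lem:intrinsicDescription}, which identifies $L^{\circ 2} \subset \P^n$, up to a linear re-embedding $\vartheta$, with the image of the Veronese variety $\nu_2(\P W) \subset \P\Sym^2 W$ under the linear projection $\pi$ away from the single point $[q]$ spanning $\P(I(Z)_2)$. Since $\dim \P(\Sym^2 W/\langle q \rangle) = \binom{k+2}{2} - 2$, the embedding $\vartheta \colon \P(\Sym^2 W/\langle q\rangle) \hookrightarrow \P^n$ contributes exactly the claimed $n - \binom{k+2}{2} + 2$ linear forms. The remaining task is to find equations that cut out the image $\pi(\nu_2(\P W))$ set-theoretically.

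Choose a basis of $W$ diagonalizing $q$, so that $q$ corresponds to $\operatorname{diag}(1,\ldots,1,0,\ldots,0)$ with $s$ ones on the diagonal, and identify $\Sym^2 W$ with symmetric $(k+1)\times (k+1)$ matrices. Then $\nu_2(\P W)$ becomes the rank-$\leq 1$ locus and a class $[M] \in \P(\Sym^2 W/\langle q\rangle)$ lies in the image of $\pi$ if and only if $\operatorname{rank}(M - \lambda q) \leq 1$ for some $\lambda \in \C$. Equations of the image are obtained by taking $\C$-linear combinations of the $2 \times 2$ minors of $M - \lambda q$ (each of which is a polynomial of degree $\leq 2$ in $\lambda$) that are $\lambda$-independent. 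For $s \geq 4$, the three families appearing in Corollary \ref{cor:eigenvalues} generalize: minors $M_{ij|k\ell}$ with four distinct indices whose intersection with the support of $q$ does not produce a $\lambda$-contribution, off-diagonal differences $M_{ik|i\ell} - M_{jk|j\ell}$, and diagonal differences $(M_{ik|ik} - M_{i\ell|i\ell}) - (M_{jk|jk} - M_{j\ell|j\ell})$ (where $i,j$ are support indices) that cancel the $\lambda$-dependence. After removing Pl\"ucker-type redundancies, these yield the stated $(k+3)(k+2)(k+1)(k-2)/12$ independent quadratic forms. In the degenerate cases additional equations are needed: for $s = 3$, the smooth conic structure of $q$ forces seven extra cubic forms, consistent with case (ii)(a) of Theorem \ref{thm:squaringPlanes} where the quadratic count vanishes for $k = 2$ and only the seven cubics remain; for $s = 2$, the reducibility $q = z_0 z_1$ produces two additional binomial quadrics capturing the resulting product structure.

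The main obstacle is the sufficiency direction: given a symmetric matrix $M$ satisfying all proposed relations, exhibit $\lambda \in \C$ and $v \in W$ with $M - \lambda q = vv^T$. For $s \geq 4$, one reconstructs $v$ (up to scalar) from the off-diagonal vanishing minors and then recovers $\lambda$ from the consistency of the diagonal differences, in direct analogy with the argument for Corollary \ref{cor:eigenvalues}. For $s = 3$ the quadratic relations alone are insufficient (as seen already in the $k = 2$ instance of Theorem \ref{thm:squaringPlanes}), and the seven cubic forms must be identified explicitly, either via a structural interpretation using the geometry of the projected Veronese surface $\nu_2(\P^2)$ (which is responsible for the persistence of the seven cubics for all $k$) or by extending to arbitrary $k$ the explicit computer-algebra computation already carried out for $k = 2$. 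Verifying both that the proposed equations lie in the vanishing ideal of the image and that they cut it out set-theoretically is the most delicate step; once this is established, the combinatorial count of independent equations in each case follows from a direct dimension computation or a representation-theoretic argument using the natural action of the orthogonal group stabilizing $q$.
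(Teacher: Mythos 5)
Your high-level reduction is correct and matches the paper's: by Lemma~\ref{lem:intrinsicDescription}, $L^{\circ 2}$ is a linear re-embedding of the projection of $\nu_2(\P W) \subset \P\Sym^2 W$ away from the point $[q]$, and choosing coordinates with $q = I_s$, you correctly recast membership as $\rank(M - \lambda q) \le 1$ for some $\lambda$. The count of $n - \binom{k+2}{2} + 2$ linear forms also falls out correctly. But the two remaining steps --- showing the proposed equations cut out the image set-theoretically, and counting the independent ones --- are exactly where your proposal stops at a sketch, and these are the substantive content of the theorem.

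For set-theoretic sufficiency, you write that ``one reconstructs $v$ from the off-diagonal vanishing minors and then recovers $\lambda$ from the consistency of the diagonal differences, in direct analogy with the argument for Corollary~\ref{cor:eigenvalues},'' but this is circular: in the paper, Corollary~\ref{cor:eigenvalues} is \emph{deduced from} Theorem~\ref{thm:uniqueQuadricCase} (via Proposition~\ref{prop:matrixCompletion}), not proved independently. If you intend to build the reconstruction of $(v,\lambda)$ directly from the vanishing of the families $\mathcal E, \mathcal F, \mathcal G$, you must actually carry out that reconstruction and verify its consistency; this is nontrivial, and a uniform argument in $k$ needs some device to control the combinatorics. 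The paper's device is to form the elimination ideal $J = J_0 \cap \Cy$ (with $J_0$ the ideal of $2\times 2$ minors of $M = Y + tI_s$ in $\Cyt$), localise at variables $y_{i_0j_0}$ or $y_{i_0i_0}-y_{j_0j_0}$, observe that the resulting generators involve at most six distinct indices, and thereby reduce the claim to $k \le 5$, which is then checked in \texttt{Macaulay2}. Your proposal does not contain this reduction or any substitute for it, and without it the sufficiency step is unproven. Similarly, for $s = 3$ you acknowledge the seven cubics must ``be identified explicitly'' but do not identify them (the paper exhibits them as $\mathcal H_1 \cup \mathcal H_2$ and proves via Lemma~\ref{lem:linearIndependenceAmongSets} that they are needed precisely when $s=3$).

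Finally, the counting of quadratic generators is asserted to ``follow from a direct dimension computation or a representation-theoretic argument,'' but neither is given. The paper's count is obtained by explicitly constructing bases $\mathcal B_{\mathcal E}, \mathcal B_{\mathcal F}, \mathcal B_{\mathcal G}$ in Lemma~\ref{lem:basesEFG}, proving their independence via distinguished monomials, and summing cardinalities; the number $(k+3)(k+2)(k+1)(k-2)/12$ only emerges from that computation and is not obvious from a dimension count alone. In short: right framework, right form of the equations, but the two hard steps --- sufficiency (including the reduction to $k \le 5$) and the explicit counting --- are missing, so the proposal as it stands is a plan rather than a proof.
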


In fact, for $s \geq 3$, we show that the claim holds 
\emph{scheme-theoretically}, see Remark~\ref{rem:schemeTheoretic}. We believe 
that in fact for arbitrary 
$s$ the claim is even true \emph{ideal-theoretically}.
\medskip

The remainder of this subsection is dedicated to the proof of
\cref{thm:uniqueQuadricCase}. 
It reduces to the following elimination problem. Let $k \geq 1$ and $s \geq 2$. Consider a symmetric $(k+1)\times (k+1)$-matrix 
of variables $Y := (y_{ij})_{1\leq i,j \leq k+1}$ 
and the corresponding polynomial ring
$\Cy := \C[y_{ij} ]/(y_{ij}-y_{ji}).$
Over the polynomial ring $\Cyt$, we consider the matrix $M := Y+tI_s$, where we define the matrix
\[I_s :=  \diag(\underbrace{1,\ldots,1}_{s}, \underbrace{0,\ldots,0}_{k+1-s}) 
\in \C^{(k+1)\times(k+1)}.\]

Henceforth, we denote the $2\times 2$-minors of $Y$ with rows $i\neq j$ 
and columns $\ell \neq m$ by 
  $Y_{ij|\ell m} := y_{i\ell} y_{j m} - y_{im} y_{j \ell} \in \Cy,$
and correspondingly $M_{ij|\ell m} \in \Cyt$ for the $2 \times 2$-minors of 
$M$. Let $J_0 
\subset \Cyt$
denote the 
ideal generated by the $2 
\times 2$-minors of $M$. By $J := J_0 \cap \Cy$ we denote the ideal in $\Cy$ 
obtained by 
eliminating $t$ from $J_0$. 
We explicitly describe the elimination 
ideal $J$ for all values of $k$ and $s$. 

\begin{prop} \label{prop:matrixCompletion}
  The vanishing set 
  $V(J) \subset \P^{\binom{k+2}{2}-1}$ can
  set-theoretically be described as the zero set of
  \[\begin{cases}
  (k+3)(k+2)(k+1)(k-2)/12 \text{ quadratic forms}, &\text{if } s \geq 4, \\
  (k+3)(k+2)(k+1)(k-2)/12 \text{ quadratic and 7 cubic forms}, &\text{if }
  s = 3, \\
  (k+3)(k+2)(k+1)(k-2)/12 + 2 \text{ quadratic forms}, &\text{if } s = 2.  
  \end{cases}\]
\end{prop}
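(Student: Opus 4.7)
The plan is to realize $V(J)$ as the image of an explicit parameterization and then construct equations cutting it out set-theoretically. Since the ideal of $2\times 2$-minors of a symmetric matrix defines the rank-$\leq 1$ locus, one has $V(J_0) = \{(Y,t) \in \Spec\Cyt : Y + tI_s \text{ has rank } \leq 1\}$, and this locus is parameterized by $\Phi \colon \C^{k+1} \times \A^1 \to \Spec\Cy$, $(v,t) \mapsto vv^T - tI_s$. Hence $V(J) = \overline{\im \Phi}$ is a $(k+2)$-dimensional variety of expected codimension $(k+2)(k-1)/2$ in $\Spec\Cy$.

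First I would catalogue the quadratic forms in $J$ by classifying the minors $M_{ij|\ell m}$ according to the incidence pattern of rows and columns and the membership of indices in $\{1,\ldots,s\}$. Three natural families arise: the disjoint-index minors $M_{ij|\ell m} = Y_{ij|\ell m}$ with $\{i,j\}\cap\{\ell,m\}=\emptyset$, which already belong to $\Cy$; the diagonal minors $M_{ij|ij} = y_{ii}y_{jj}-y_{ij}^2$ for $i,j > s$; and the $t$-eliminating differences of shared-index minors, typified by
\[
M_{ik|i\ell} - M_{jk|j\ell} = (y_{ii}-y_{jj})y_{k\ell} - y_{i\ell}y_{ik} + y_{j\ell}y_{jk}
\]
for distinct $i,j \in \{1,\ldots,s\}$ and distinct $k,\ell \notin \{i,j\}$, together with the analogous combination obtained by subtracting diagonal minors $M_{ij|ij}$. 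The count, modulo the Plücker-type dependency $Y_{ik|j\ell} = Y_{ij|k\ell} + Y_{i\ell|jk}$ among $2\times 2$-minors, should match $(k+3)(k+2)(k+1)(k-2)/12$.

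Second I would show that for $s \geq 4$ these quadrics cut out $V(J)$ set-theoretically. Given a symmetric matrix $Y$ annihilating the listed forms, the disjoint-index minors force the off-diagonal part of $Y$ to have rank $\leq 1$, so after stratifying by the support of the off-diagonal one writes $y_{ij} = v_iv_j$ for $i\neq j$ and some $v \in \C^{k+1}$; the equations from the third family then yield $y_{ii}-y_{jj} = v_i^2-v_j^2$ for $i,j$ in the same shift-class, and the diagonal minors pin down $y_{ii}=v_i^2$ for $i > s$. A single $t$ then satisfies $y_{ii} = v_i^2 - t$ for all $i \leq s$, so $Y = vv^T - tI_s \in \im\Phi$. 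The hypothesis $s \geq 4$ is precisely what guarantees enough shifted indices to determine $t$ uniquely from the quadrics alone. For $s=3$ and $s=2$ the reconstruction breaks down on a proper subvariety; adding respectively seven cubic forms coming from $3\times 3$-minors of the shifted block, or two additional quadratic forms from shifted diagonal combinations, closes the reconstruction. Consistency with \cref{thm:squaringPlanes} in the case $k=2$ provides a sanity check for the counts in the degenerate cases.

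The main obstacle will be the reverse set-theoretic containment, i.e.\ reconstructing a valid pair $(v,t)$ from a matrix $Y$ annihilating the listed forms. The delicate points are: (i) handling matrices where $v$ has many zero coordinates, which requires stratifying by the off-diagonal rank-1 support; (ii) uniquely identifying the spurious components of the common zero locus for $s \in \{2,3\}$ and verifying that the extra cubic or quadratic generators remove exactly these components; and (iii) the precise combinatorial counting of independent generators modulo Plücker relations among $2\times 2$-minors, which yields the announced count $(k+3)(k+2)(k+1)(k-2)/12$.
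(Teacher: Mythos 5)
Your plan differs in strategy from the paper's proof. The paper first exhibits explicit families $\mathcal{E},\mathcal{F},\mathcal{G},\mathcal{H}_1,\mathcal{H}_2$ of low-degree polynomials in $J$, then proves $V(\mathcal{E}\cup\cdots\cup\mathcal{H}_2)=V(J)$ by a localisation argument that reduces the verification to $k\leq 5$, which is then checked with \texttt{Macaulay2}; finally it constructs explicit bases of $\langle\mathcal{E}\rangle,\langle\mathcal{F}\rangle,\langle\mathcal{G}\rangle$ and counts. You instead propose a direct reconstruction of a point $(v,t)$ with $Y=vv^T-tI_s$ from a matrix $Y$ annihilating your proposed quadrics (and the occasional cubics). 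That is a genuinely different route, and if it could be carried out it would be more self-contained than the paper's reliance on machine verification; but as written it has a concrete logical gap and, more importantly, defers the whole hard step.

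The gap: you assert that ``the disjoint-index minors force the off-diagonal part of $Y$ to have rank $\leq 1$,'' i.e.\ that vanishing of all $Y_{ij|\ell m}$ with $\{i,j\}\cap\{\ell,m\}=\emptyset$ already guarantees $y_{ij}=v_iv_j$ for some $v$. That is false. Take $k+1=s=4$ and $y_{12}=y_{13}=1$ with all other off-diagonal entries equal to $0$: every disjoint-index $2\times 2$-minor vanishes, yet no $v$ satisfies $v_1v_2=v_1v_3=1$ and $v_2v_3=0$. What actually rules this matrix out is a constraint from \emph{overlapping}-minor differences (the paper's $\mathcal{F}$ and your third family), e.g.\ $M_{12|13}-M_{42|43}=(y_{11}-y_{44})y_{23}-y_{12}y_{13}+y_{42}y_{43}=-1\neq 0$. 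So your argument chain, which invokes the third family only to compute the diagonal shifts $y_{ii}-y_{jj}$, is in the wrong order: the overlapping-minor relations are needed at the rank-$1$ reconstruction step, not just afterwards. Beyond that, the delicate part you acknowledge in points (i)--(iii) of your final paragraph --- stratifying by the support of $v$, identifying and eliminating the spurious components for $s\in\{2,3\}$, and verifying the count --- is exactly the substance of the proposition and is not carried out; what you have is closer to a plan than a proof. The paper avoids the stratified reconstruction altogether by restricting to the open sets where either some $y_{ij}$ ($\{i\}\cap\{j\}\subset\{s+1,\ldots,k+1\}$) or some $y_{ii}-y_{jj}$ ($i,j\leq s$) is nonzero, reducing there to a bounded-size submatrix, and handling the excluded locus (a single point for $s\geq 3$, a plane for $s=2$) by hand.
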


First, we observe that \cref{thm:uniqueQuadricCase} follows directly from 
\cref{prop:matrixCompletion}.

\begin{proof}[Proof of Theorem~\ref{thm:uniqueQuadricCase}]
  Analogous to the proof of \cref{prop:eigenvaluesEarly}, we identify $\P \Sym^2 W$ with 
  $\P\Sym^2 \C^{k+1}$ such that $q = I_s$.
  By \cref{lem:intrinsicDescription}, the coordinate-wise square 
  $L^{\circ 2} $ is a linear re-embedding of the variety obtained 
  by the projection of $\nu_2(\P W)$ from the point $q=I_s 
  \in \P \Sym^2 W$. Note that $V(J)$ describes the set of points $Y\in \P 
  \Sym^2 W$ lying on the line joining $q $ with some point in $\nu_2(\P 
  W)$. Hence, the projection from $q$ is 
  given by intersecting $V(J)$ with a hyperplane $H \subset \P \Sym^2 W$ not 
  containing $q = I_s$.
  
  From \cref{prop:matrixCompletion}, we know that $V(J) \cap H$ has a 
  set-theoretic description inside $H \cong \P^{\binom{k+2}{2}-2}$ as the zero set of the indicated number of quadric and cubic forms. The coordinate-wise square $L^{\circ 
  2}$ is by \cref{lem:intrinsicDescription} the image of $V(J) 
  \cap H$ under a linear embedding $\vartheta\colon H \hookrightarrow \P^n$, 
  leading to additional $n-\binom{k+2}{2}+2$ linear forms vanishing on 
  $L^{\circ 2}$.
\end{proof}

We prove \cref{prop:matrixCompletion} in several steps. First, we 
describe a set $\mathcal{X}$ of certain low-degree polynomials in the 
ideal $J$. Secondly, we show that $V(\mathcal{X}) = V(J)$. Finally, we identify 
a subset of $\mathcal{X}$ providing minimal generators of the ideal $(\mathcal 
X) \subset \Cy$, consisting of the claimed number of quadratic and cubic 
forms.

\begin{lemma}
  The following sets of polynomials in $\Cy$ are contained in the ideal $J$:
  \begin{align*}
  \mathcal{E} &:= \{Y_{ij|\ell m} \:\mid \:\{i,j\} \cap \{\ell,m\} \subset 
  \{s+1, \ldots,k+1\}\}, \\
  \mathcal{F} &:= \{Y_{i\ell|i m} - Y_{j\ell|jm} \:\mid\: i, j \leq s,\: 
  \{\ell\} 
  \cap \{m\} \subset \{s+1, \ldots,k+1\}\}, \\
  \mathcal{G} &:= 
  \{Y_{ij|ij} - Y_{j\ell|j\ell} + Y_{\ell m|\ell m} - Y_{m i|m i} \:\mid\: 
  i, j, \ell, m \leq s \text{ distinct}\},\\
  \mathcal{H}_1 &:= 
  \{y_{i\ell}(Y_{ij|ij}-Y_{i\ell|i\ell})-(y_{\ell \ell}-y_{jj})Y_{ij|j\ell} 
  \mid 
  i,j,\ell \leq s\}, \\
  \mathcal{H}_2 &:=    
  \{(y_{ii}-y_{jj})Y_{ij|ij}+(y_{jj}-y_{\ell \ell})Y_{j \ell|j \ell}+(y_{\ell 
    \ell}-y_{ii})Y_{\ell i|\ell i}
  \mid i,j, \ell \leq s\}.
\end{align*}  
\end{lemma}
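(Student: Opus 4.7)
The proof strategy is entirely computational: for each set of polynomials, I will exhibit the polynomial as an explicit $\Cyt$-linear combination of $2\times 2$ minors of $M = Y + tI_s$, and then verify that in this combination all terms involving $t$ cancel. Since such a combination is automatically a member of $J_0$, and vanishing of the $t$-dependent part places it in $\Cy$, it therefore lies in $J = J_0 \cap \Cy$.

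The key bookkeeping is the comparison between $M_{ab|cd}$ and $Y_{ab|cd}$. Writing $M_{ab} = y_{ab} + t \,[a=b \leq s]$, one gets the three useful cases:
\begin{align*}
    M_{ij \mid \ell m} &= Y_{ij \mid \ell m}, \qquad \text{provided } \{i,j\}\cap\{\ell,m\} \subset \{s+1,\ldots,k+1\}, \\
    M_{i\ell \mid i m} &= Y_{i\ell \mid im} + t\, y_{\ell m} \qquad \text{for } i \leq s,\ \ell \neq m, \\
    M_{ab \mid ab} &= Y_{ab \mid ab} + t(y_{aa}+y_{bb}) + t^2 \qquad \text{for } a, b \leq s,\ a \neq b,
\end{align*}
together with the diagonal variant $M_{i\ell \mid i\ell} = Y_{i\ell \mid i\ell} + t\, y_{\ell\ell}$ when $i \leq s$ and $\ell > s$.

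With these identities, the four cases unfold directly. For $\mathcal{E}$, the hypothesis on $\{i,j\}\cap\{\ell,m\}$ is exactly the condition under which the first identity gives $M_{ij|\ell m} = Y_{ij|\ell m}$. For $\mathcal{F}$, the $t$-contribution to $M_{i\ell|im}$ depends only on $\ell,m$ (namely $ty_{\ell m}$, or $ty_{\ell\ell}$ if $\ell = m > s$), not on the outer index $i \leq s$; thus the difference $M_{i\ell|im} - M_{j\ell|jm}$ is $t$-free. For $\mathcal{G}$, both the linear coefficient $(y_{ii}+y_{jj})-(y_{jj}+y_{\ell\ell})+(y_{\ell\ell}+y_{mm})-(y_{mm}+y_{ii})$ and the $t^2$ coefficient $1-1+1-1$ telescope to $0$, so the alternating sum of four principal minors $M_{ab|ab}$ equals the corresponding alternating sum of $Y_{ab|ab}$. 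For $\mathcal{H}_1$, one combines
\[
y_{i\ell}(M_{ij|ij} - M_{i\ell|i\ell}) - (y_{\ell\ell}-y_{jj})\,M_{ij|j\ell};
\]
the $t$-part of the first summand is $y_{i\ell}\cdot t(y_{jj}-y_{\ell\ell})$ and the $t$-part of the second is $-(y_{\ell\ell}-y_{jj})\cdot(-ty_{i\ell})$, and these cancel. For $\mathcal{H}_2$, using $M_{ab|ab} = Y_{ab|ab} + t(y_{aa}+y_{bb}) + t^2$, the $t^2$-terms sum to $(y_{ii}-y_{jj})+(y_{jj}-y_{\ell\ell})+(y_{\ell\ell}-y_{ii})=0$, and the $t$-terms sum to $(y_{ii}^2 - y_{jj}^2)+(y_{jj}^2-y_{\ell\ell}^2)+(y_{\ell\ell}^2-y_{ii}^2)=0$.

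There is no conceptual obstacle here: the only real work is the case-by-case verification above, which amounts to noting the precise pattern of shifts produced by $+tI_s$. The mild subtlety in $\mathcal F$ is handling separately the sub-case $\ell = m > s$ (where the $+t$-shift enters through the diagonal entry $M_{\ell\ell}$ with the appropriate index hypothesis), which is why the set $\mathcal F$ is defined with the constraint $\{\ell\}\cap\{m\}\subset\{s+1,\ldots,k+1\}$.
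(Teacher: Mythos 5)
Your proof is correct and follows essentially the same strategy as the paper: establish the relations between $M_{ab|cd}$ and $Y_{ab|cd}$ (the paper's equation~\eqref{eq:minorsOfMY}), and then check by direct cancellation that each polynomial in $\mathcal{E},\mathcal{F},\mathcal{G},\mathcal{H}_1,\mathcal{H}_2$ equals a $t$-free $\Cyt$-linear combination of the $2\times 2$ minors of $M$, hence lies in $J_0\cap\Cy = J$. The only difference is cosmetic: you spell out the $t$-cancellation bookkeeping in more detail than the paper does.
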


\begin{proof}
  Using that
  \begin{equation}\label{eq:minorsOfMY}
  \begin{aligned}
      Y_{ij|ij} &= M_{ij|ij}-(y_{ii}+y_{jj})t-t^2 \qquad \text{for all $i,j 
      \leq s$ 
      distinct and} \\
      Y_{i\ell|j\ell} &= M_{i \ell|j \ell} - ty_{ij} \qquad \hspace{2.11cm} 
      \text{for all $\ell 
      \leq s$, $\{i\} \cap \{j\} \subset \{s+1,\ldots,k+1\}$},
  \end{aligned}
  \end{equation}
  we can check that
  \begingroup \footnotesize
  \begin{align*}
    Y_{ij|\ell m} &= M_{ij|\ell m}, \\
    Y_{i\ell|i m} - Y_{j\ell|jm} &= M_{i\ell|i m} - M_{j\ell|jm}, \\
    Y_{ij|ij} - Y_{j\ell|j\ell} + Y_{\ell m|\ell m} - Y_{m i|m i} &= M_{ij|ij} 
    - M_{j\ell|j\ell} + M_{\ell m|\ell m} - M_{m i|m 
    i}, \\
    y_{i\ell}(Y_{ij|ij}-Y_{i\ell|i\ell})-(y_{\ell \ell}-y_{jj})Y_{ij|j\ell} &= 
     y_{i\ell}(M_{ij|ij}-M_{i\ell|i\ell})-(y_{\ell \ell}-y_{jj})M_{ij|j\ell}, \\
    (y_{ii}-y_{jj})Y_{ij|ij}+(y_{jj}-y_{\ell \ell})Y_{j \ell|j \ell}+(y_{\ell 
      \ell}-y_{ii})Y_{\ell i|\ell i} &= 
      (y_{ii}-y_{jj})M_{ij|ij}+(y_{jj}-y_{\ell \ell})M_{j \ell|j 
      \ell}+(y_{\ell \ell}-y_{ii})M_{\ell i|\ell i}
  \end{align*}
  \endgroup
  holds for respective indices $i,j,\ell,m$. From this, we conclude 
  that these polynomials are contained in $J_0 \cap \Cy = J$.
\end{proof}

From now on, we denote
$\mathcal{X} := \mathcal{E} \cup \mathcal{F} \cup \mathcal{G} \cup 
\mathcal{H}_1 \cup \mathcal{H}_2$. These polynomials describe $V(J)$:

\begin{lemma} \label{lem:setTheoreticEquality}
  Inside $\P \Sym^2 \C^{k+1} = \P^{\binom{k+2}{2}-1}$, we consider the open sets
    \[U_1 := \P \Sym^2 \C^{k+1}  \setminus \{I_s\} \qquad \text{and} \qquad U_2 
    := 
    \P \Sym^2 \C^{k+1} \setminus
    \left\{\left(\begin{smallmatrix} \begin{smallmatrix}* & * \\ * & * 
    \end{smallmatrix} & 0 \\ 0 & 0\end{smallmatrix}\right)\right\}.\]
  \begin{enumerate}[(i)]
    \item If $s \geq 3$, then $V(\mathcal X)$ and $V(J)$ agree 
    scheme-theoretically on $U_1$.
    \item If $s = 2$, then $V(\mathcal X)$ and $V(J)$ agree 
    scheme-theoretically on $U_2$.
    \item For $s$ arbitrary, $V(\mathcal X)$ and $V(J)$ agree set-theoretically.
  \end{enumerate}
\end{lemma}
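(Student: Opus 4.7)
The inclusion $V(J) \subseteq V(\mathcal X)$ is immediate from $\mathcal X \subset J$, so in each part the content is the reverse inclusion. The idea is that for any $Y$ satisfying the polynomials of $\mathcal X$, we will exhibit a scalar $t_0 \in \C$ (possibly only locally in $Y$) such that every $2\times 2$ minor of $M = Y + t_0 I_s$ vanishes; by the identities relating the minors of $M$ and of $Y$ recorded in the preceding lemma, this places $(Y, t_0) \in V(J_0)$ and hence $Y \in V(J)$.

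The first step is to use the explicit formulas from the previous lemma to classify the $\binom{k+1}{2}^2$ minors $M_{ij|\ell m}$ as polynomials in $t$: degree~0 (when $\{i,j\} \cap \{\ell,m\} \cap \{1,\ldots,s\} = \emptyset$), degree~1 (when exactly one index from $\{i,j\} = \{\ell,m\}$ lies in $\{1,\ldots,s\}$ or one overlap index lies there), or degree~2 (when both indices of a diagonal minor $M_{ij|ij}$ lie in $\{1,\ldots,s\}$). The degree-0 minors vanish identically by $\mathcal E$. For the degree-1 minors, the differences $Y_{i\ell|im} - Y_{j\ell|jm}$ from $\mathcal F$ together with the ``mixed'' identities in $\mathcal H_1$ are designed to force the constant coefficient, viewed as a function on $\{i \leq s\}$, to be compatible with a single value of $t_0$. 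For the degree-2 minors, the relation $\mathcal G$ guarantees that $Y_{ij|ij}$ behaves additively in $(i,j)$ (up to an affine shift), and $\mathcal H_2$ precisely guarantees that $(y_{ii} - y_{jj})$-weighted sums of such minors are consistent — this is what pins down $t_0$ uniquely (up to sign of a square root) via any triple of distinct indices $i, j, \ell \leq s$ for which $(y_{ii}, y_{jj}, y_{\ell\ell})$ are not all equal. One then verifies directly, using the formulas of the previous lemma and the remaining identities in $\mathcal X$, that with this choice of $t_0$ every minor of $M$ indeed vanishes.

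For the scheme-theoretic refinements (i) and (ii), we work locally on $U_1$, resp.\ $U_2$. The exclusion of $I_s$ in $U_1$ (for $s \geq 3$) is exactly what is needed so that the previous paragraph produces a $t_0$ as a \emph{regular} (rather than merely point-set) function of $Y$ on an affine cover of $U_1$: outside $\{I_s\}$, at least one of the identities from $\mathcal H_2$ has an invertible weight coefficient $(y_{ii} - y_{jj})$ (or the discriminant-type combination appearing on squaring), and hence expresses $t_0$ algebraically in the coordinates of $Y$. A symmetric argument works for $s = 2$: the excluded locus in $U_2$ is precisely the set where \emph{every} $\mathcal H_2$-style linear combination degenerates, so on $U_2$ we can still solve for $t_0$ algebraically; this is why the larger exceptional set is needed when $s = 2$. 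With such a local expression for $t_0$, the inclusion $\mathcal X \subset J$ can be upgraded to an equality of localized ideals by expressing each generator of $J$ (viewed via elimination from the $2\times 2$ minors of $M$) as an explicit combination of the elements of $\mathcal X$ and the ``resolvent'' for $t_0$.

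The main obstacle will be the last step: showing that every minor-of-$M$ relation descends, after substituting the locally constructed $t_0$, into an element of the ideal $(\mathcal X)$, and not merely into the radical. This is a bookkeeping computation controlling how degree-1 and degree-2 minors in $t$ combine — it is precisely why the families $\mathcal H_1$ and $\mathcal H_2$ (which look somewhat ad hoc) are included in $\mathcal X$. The bifurcation between $s \geq 3$ and $s = 2$ appears because $\mathcal H_2$ requires three distinct diagonal indices in $\{1,\ldots,s\}$; when $s=2$ this family is empty, and the excluded locus $U_2^c$ is exactly the locus where no substitute relation is available.
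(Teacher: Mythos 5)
Your high-level plan (show $V(\mathcal X)\subseteq V(J)$ by finding, for each $Y\in V(\mathcal X)$, a scalar $t_0$ making all $2\times2$ minors of $Y+t_0 I_s$ vanish, and upgrade to a scheme-theoretic statement by extracting $t_0$ as a regular function on a cover of $U_1$, resp.\ $U_2$) is in the right family as the paper's argument, but there are several genuine gaps.

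First, your proposed cover does not actually cover $U_1$. You claim that outside $\{I_s\}$ one of the $\mathcal H_2$-coefficients $y_{ii}-y_{jj}$ (with $i,j\le s$) is invertible, but this is false: take $Y$ diagonal with $y_{ii}=0$ for $i\le s$ and some $y_{jj}\ne 0$ for $j>s$ — this is not $I_s$ yet all $y_{ii}-y_{jj}$ with $i,j\le s$ vanish, as do all off-diagonal entries. The paper covers $U_1$ by the affine opens $D(y_{ij})$ for all pairs with $\{i\}\cap\{j\}\subset\{s+1,\ldots,k+1\}$ \emph{together with} $D(y_{ii}-y_{jj})$ for $i,j\le s$; your plan omits the $D(y_{ij})$ charts entirely, and it is these charts that handle the point-configuration above.

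Second, your resolvent for $t_0$ is unnecessarily complicated (and consequently risky). You extract $t_0$ from $\mathcal H_2$ by ``squaring'' and taking a discriminant, which produces $t_0$ only up to a sign of a square root and is not a regular function without further work. The paper instead notices that on each chart there is a \emph{linear} identity for $t$: e.g.\ on $D(y_{i_0j_0})$ with a distinct $\ell_0\le s$, one has $t + Y_{i_0\ell_0|j_0\ell_0} = M_{i_0\ell_0|j_0\ell_0}\in J_0$, so after normalising $y_{i_0j_0}=1$, elimination of $t$ is just the substitution $t=-Y_{i_0\ell_0|j_0\ell_0}$. This sidesteps multivaluedness entirely.

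Third — and most seriously — the ``bookkeeping computation'' you defer at the end is not a routine verification but the crux of the lemma. After substituting the chart-local value of $t$, one must show that each resulting generator of $\restr{J}{y_{i_0j_0}=1}$ actually lies in the ideal $(\mathcal X)$, not merely its radical. The paper does this by noting that every such generator involves variables with at most six distinct indices, so the claim reduces to a symmetric submatrix of size $\le 6$, i.e.\ to $k\le 5$ — and that bounded range it verifies computationally (Macaulay2). Your proposal does not contain any mechanism that would close this: without the reduction-to-bounded-$k$ argument and the accompanying computation, you have not established the ideal-theoretic (hence scheme-theoretic) equality.

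Finally, part (iii) needs a separate, short argument which you do not give: namely that the excluded locus (the point $I_s$ for $s\ge 3$, or the matrices $\left(\begin{smallmatrix}\begin{smallmatrix}*&*\\ *&*\end{smallmatrix}&0\\ 0&0\end{smallmatrix}\right)$ for $s=2$) is contained in both $V(\mathcal X)$ and $V(J)$, so the set-theoretic equality extends over it. The paper checks this directly — $I_s\in V(J)$ trivially, and for $s=2$ the $2\times2$ block has rank $\le 1$ after adding a suitable $t_0 I_2$.
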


\begin{proof}
  For $k \leq 5$, we have checked computationally with 
  a straightforward implementation in 
  \texttt{Macaulay2} \cite{M2} that even the ideal-theoretic 
  equality 
  $(\mathcal X) = J$ holds.
  We now argue that 
  from this we can conclude the claim for arbitrary $k$.
  \begin{enumerate}[(i)]
    \item Let $s \geq 3$. We need to show that the ideal generated by $\mathcal 
    X 
    \subset \Cy$ 
    coincides with $J \subset 
    \Cy$ after localisation at any element in the set 
    \[\{y_{ij} \mid \{i\} \cap 
    \{j\} \subset \{s+1, \ldots,k+1\}\} \cup \{y_{ii}-y_{jj} \mid i,j \leq 
    s\},\]
    since
    the union of the corresponding non-vanishing sets $D(y_{ij}), 
    D(y_{ii}-y_{jj})$
    is $U_1$.
    
    In order to show that $(\mathcal X)$ and $J$ agree after localisation at 
    $y_{i_0j_0}$ for $\{i_0\} \cap \{j_0\} \subset \{s+1,\ldots,k+1\}$, we may 
    substitute $y_{i_0 j_0} = 1$ in both ideals. For a fixed $\ell_0 
    \leq s$ distinct from $i_0$ and $j_0$, we note that
    $t+Y_{i_0\ell_0|j_0\ell_0} = M_{i_0 \ell_0| j_0 \ell_0} \in 
    \restr{J_0}{y_{i_0 j_0}=1}.$
    Hence, eliminating $t$ from $\restr{J_0}{y_{i_0 j_0}=1}$ just amounts to 
    replacing $t = -Y_{i_0 \ell_0|j_0 \ell_0}$ in each occurrence of $t$ in the 
    minors $M_{ij|\ell m}$ (for $i \neq j$, $\ell \neq m$) generating the ideal 
    $J_0$.
    
    According to \eqref{eq:minorsOfMY}, this leads to the following generators 
    of 
    $\restr{J}{y_{i_0 j_0} = 1}$:
    \begin{itemize}
     \item 
     \begin{tabular}{>{\raggedright}p{0.4\textwidth}>{\raggedleft}p{0.4\textwidth}}
      $Y_{i_0 
     \ell_0|j_0\ell_0}^2-(y_{ii}+y_{jj})Y_{i_0 
      \ell_0|j_0\ell_0}+Y_{ij|ij}$ & for $i\neq j \leq s$, 
     \end{tabular} 
      \vspace{0.1em}
     \item 
     \begin{tabular}{>{\raggedright}p{0.3\textwidth}>{\raggedleft}p{0.5\textwidth}}
       $-y_{ij}Y_{i_0 \ell_0|j_0\ell_0}+Y_{i \ell|j\ell}$ & for 
      $\ell \leq s, \{i\} \cap \{j\} \subset \{s+1,\ldots,k+1\}$, 
     \end{tabular}
      \vspace{0.1em}
     \item 
     \begin{tabular}{>{\raggedright}p{0.1\textwidth}>{\raggedleft}p{0.7\textwidth}}
     $Y_{ij|\ell m}$ & for 
      $\{i,j\}\cap \{\ell,m\} 
      \subset\{s+1,\ldots,k+1\}$.
     \end{tabular}
    \end{itemize}
    To check that $\restr{J}{y_{i_0 j_0} = 1} = \restr{(\mathcal X)}{y_{i_0 
    j_0} = 1}$, we need to check that each of 
    these polynomials
    belong to $\restr{(\mathcal X)}{y_{i_0 j_0} = 
    1}$. For this, it is 
    enough 
    to see that they can be expressed in terms of those polynomials in 
    $\mathcal 
    X$ 
    that only involve variables with indices among 
    $\{i_0,j_0,\ell_0,i,j,\ell\}$. 
    This corresponds to showing the claim for a corresponding symmetric 
    submatrix 
    of $M$ of size at most $6 \times 6$. We conclude that it is enough to check 
    $\restr{J}{y_{i_0 j_0} = 1} = \restr{(\mathcal X)}{y_{i_0 j_0} = 1}$ for $k 
    \leq 5$.
    
    Similarly, in order to show that
    $\restr{J}{y_{i_0 i_0}-y_{j_0 j_0}=1} = \restr{(\mathcal X)}{y_{i_0 
        i_0}-y_{j_0 j_0}=1}$ holds for $i_0,j_0 \leq s$ distinct, we realise 
        that
    $t+Y_{i_0 \ell_0|i_0 \ell_0}-Y_{j_0 \ell_0|j_0 \ell_0} = M_{i_0 \ell_0|i_0 
    \ell_0} - M_{j_0 \ell_0|j_0 \ell_0} \in \restr{J_0}{y_{i_0 j_0}=1}$
    holds for fixed $\ell_0 \leq s$ distinct from $i_0$ and $j_0$. 
    Therefore, replacing $t = Y_{j_0 \ell_0|j_0 \ell_0}-Y_{i_0 
    \ell_0|i_0 \ell_0}$ in the expressions for the $2 \times 2$-minors of $M$ 
    describes generators of $\restr{J}{y_{i_0 i_0}-y_{j_0 j_0}=1}$. As 
    before, these polynomials involve variables with at most six distinct 
    indices, so it is enough to verify the claim for $k \leq 5$ by the 
    same argument as above.
    
    \item For $s = 2$, the argument from~(i) still shows $\restr{J_0}{y_{i_0 
        j_0} = 1} = \restr{(\mathcal X)}{y_{i_0 j_0} = 1}$ for $\{i_0, j_0\} 
        \cap \{3,\ldots,k+1\} \neq \emptyset$. For the 
    localisation at $y_{12}$ and at $y_{11}-y_{22}$, the 
    argument does not apply since we cannot choose $\ell_0$ distinct from 
    $\{i_0,j_0\} = \{1,2\}$ as before. Hence, we have shown the equality of 
    $V(\mathcal X)$ and $V(J)$ only on $U_2$.
    
    \item We observe that the polynomials in $\mathcal X$ vanish on the point 
    $I_s \in \P\Sym^2 \C^{k+1}$, and that $I_s \in V(J)$ by definition of 
    $J$. Together with (i), this proves the claim for $s \geq 3$.
    
    For $s = 2$, 
    the polynomials in 
    $\mathcal X$ vanish on all symmetric matrices of the form
    $A = \left(\begin{smallmatrix} \begin{smallmatrix}a & c \\ c & b 
    \end{smallmatrix} & 0 \\ 0 & 0\end{smallmatrix}\right) \in \Sym^2 
    \C^{k+1}$. On the other 
    hand, each such matrix is a point in $V(J)$, since $A+t_0 I_2$ is a 
    matrix of rank 
    $\leq 1$ for $t_0 \in \C$ such that $t_0^2+(a+b)t_0+(ab-c^2)=0$. Together 
    with (ii), we conclude that $V(\mathcal X) = V(J)$ holds 
    set-theoretically. \qedhere
  \end{enumerate}
\end{proof}

\begin{lemma} \label{lem:linearIndependenceAmongSets}
  The vector spaces spanned by the polynomials in $\mathcal X$ satisfy:
  \begin{enumerate}[(i)]
    \item $\langle \mathcal E \cup \mathcal F \cup \mathcal G 
      \rangle = \langle \mathcal E \rangle \oplus \langle \mathcal F \rangle 
      \oplus 
      \langle \mathcal G \rangle,$
    \item $\langle \mathcal H_1 \cup 
    \mathcal H_2 \rangle \cap (\mathcal E, \mathcal F, \mathcal G) = \emptyset$ 
    for $s = 3$,
    \item $\mathcal H_1 \cup \mathcal H_2 \subset (\mathcal E, \mathcal F, 
    \mathcal G)$ for $s \neq 3$.
  \end{enumerate}
\end{lemma}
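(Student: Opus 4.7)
The three parts will be handled by different techniques: a combinatorial argument for (i), an explicit rewriting for (iii) when $s \geq 4$, a vacuity argument for (iii) when $s \leq 2$, and a computational/specialization approach for (ii).

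For (i), I would exploit that $\mathcal E$, $\mathcal F$, $\mathcal G$ are built from three disjoint classes of $2 \times 2$ minors of $Y$. The minors $\{Y_{ij|\ell m} : i < j, \, \ell < m\}$ are linearly independent in $\Sym^2 \Cy_1$, and the defining conditions partition them into: the class used by $\mathcal E$ (those with $\{i,j\} \cap \{\ell,m\} \subset \{s+1,\ldots,k+1\}$); the class used by $\mathcal F$ (those of the form $Y_{i\ell|im}$ with $i \leq s$, which violates the $\mathcal E$-condition since $\{i,\ell\} \cap \{i,m\} = \{i\} \not\subset \{s+1,\ldots,k+1\}$); and the class used by $\mathcal G$ (the diagonal minors $Y_{ij|ij}$ with $i,j \leq s$, which also violate the $\mathcal E$-condition). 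These three collections of minors are pairwise disjoint, so the corresponding spans are transversal, giving the direct sum. Linear independence within each set is an elementary count: $\mathcal E$ consists of distinct minors; for fixed $(\ell,m)$ the $\mathcal F$-differences span the mean-zero hyperplane in the $s$-dimensional space $\langle Y_{i\ell|im} : i \leq s \rangle$; and the $\mathcal G$-relations on cyclic quadruples in $\{1,\ldots,s\}$ can be shown to be linearly independent by a direct rank computation inside the $\binom{s}{2}$-dimensional space of diagonal minors.

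For (iii), the case $s \leq 2$ is vacuous since $\mathcal H_1, \mathcal H_2$ require three distinct indices in $\{1,\ldots,s\}$. For $s \geq 4$, the plan is to give an explicit expression for each $\mathcal H$-element as a polynomial combination of elements of $\mathcal E, \mathcal F, \mathcal G$. Given $i,j,\ell \leq s$ distinct, choose a fourth index $m \leq s$: the $\mathcal G$-relation $G(i,j,\ell,m) = Y_{ij|ij} - Y_{j\ell|j\ell} + Y_{\ell m|\ell m} - Y_{mi|mi}$ lets us rewrite each diagonal minor in $\mathcal H_2(i,j,\ell)$. Because the coefficients $y_{ii}-y_{jj}$, $y_{jj}-y_{\ell\ell}$, $y_{\ell\ell}-y_{ii}$ sum to zero, telescoping combines the resulting terms into linear-form multiples of $G$-polynomials, with residual terms that lie in the $\mathcal F$-part of the ideal. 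An analogous construction handles $\mathcal H_1$, although the extra factor $y_{i\ell}$ requires additionally using Plücker-type identities among $\mathcal E$-minors to absorb the off-diagonal contributions.

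For (ii), we have $\mathcal G = \emptyset$ when $s = 3$, so we must show that no nonzero $\C$-linear combination of the cubic polynomials in $\mathcal H_1 \cup \mathcal H_2$ lies in the ideal $(\mathcal E, \mathcal F)$. The degree-$3$ part of this ideal is the image of the finite-dimensional multiplication map $\Cy_1 \otimes \langle \mathcal E \cup \mathcal F \rangle \to \Cy_3$. Following the reduction strategy of \cref{lem:setTheoreticEquality}, I would verify the non-containment computationally (e.g.\ in \texttt{Macaulay2}) for the smallest ambient dimension $k$ in which all the relevant polynomials exist, and then argue that the general-$k$ case reduces to this bounded one since every relevant polynomial involves only the three indices $i,j,\ell \in \{1,2,3\}$ together with at most a bounded number of additional column indices. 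The main obstacle will be part (iii) for $s \geq 4$: producing the explicit identities requires carefully tracking how multiple families of relations interact, and it is non-obvious a priori that no unintended cubic term remains once all cancellations are carried out.
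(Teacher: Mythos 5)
Your overall plan mirrors the paper's, but there are two concrete problems and one smaller inaccuracy that prevent the sketch from working as written.

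\textbf{Part (i).} Your framing relies on the claim that the $2\times 2$ minors $\{Y_{ij|\ell m}: i<j,\,\ell<m\}$ of a symmetric matrix are linearly independent. That is false: for all four indices distinct one has $Y_{ij|\ell m}=Y_{i\ell|jm}-Y_{im|j\ell}$ in $\Cy$ (using symmetry $y_{ab}=y_{ba}$), which is exactly the identity the paper later uses in the proof of \cref{lem:basesEFG}. The decomposition in (i) is still correct, and the paper proves it differently: the \emph{monomial supports} of $\mathcal E$, $\mathcal F$, $\mathcal G$ are pairwise disjoint (elements of $\mathcal G$ involve only $y_{ij}y_{\ell m}$ with $i,j,\ell,m\le s$; elements of $\mathcal F$ have a diagonal factor $y_{ii}$ with $i\le s$ but at least one index $>s$; elements of $\mathcal E$ never have a diagonal factor with index $\le s$), which trivially forces transversality of the spans. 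You should replace the minor-independence claim with this monomial observation; fortunately the Plücker-type relation just exhibited stays within $\mathcal E$, so your conclusion survives, but the justification you gave is wrong.

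\textbf{Part (ii).} Your route (Macaulay2 check for small $k$, then ``reduce the general case to the bounded one'') has the reduction pointing the wrong way. As $k$ grows the sets $\mathcal E$, $\mathcal F$ grow, so the ideal $(\mathcal E,\mathcal F)$ gets \emph{larger} and non-containment becomes \emph{harder}, not easier; a computation for small $k$ does not a priori transfer. The paper's argument is instead essentially free: for $s=3$ one has $\mathcal G=\emptyset$, every monomial of every element of $\mathcal H_1\cup\mathcal H_2$ uses only variables $y_{ij}$ with $i,j\le 3$, while every monomial of every element of $\mathcal E\cup\mathcal F$ involves at least one index $\ge 4$ (for $\mathcal F$-elements this uses that with $i\neq j\le 3$ the set $\{1,2,3\}\setminus\{i,j\}$ has a single element, so $\ell,m$ cannot both be $\le 3$). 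Hence no nonzero polynomial supported entirely on $\{y_{ij}:i,j\le 3\}$ can lie in $(\mathcal E,\mathcal F)$. If you want to phrase it as a specialization, set $y_{ij}\mapsto 0$ whenever $\max(i,j)\ge 4$: this kills every generator of $(\mathcal E,\mathcal F)$ but leaves $\mathcal H_1\cup\mathcal H_2$ unchanged. That is the right way to localize to small indices.

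\textbf{Part (iii).} For $s\le 2$ the vacuity argument is fine (note $s=2$ is the only non-trivial case in range since $\mathcal H_1,\mathcal H_2$ require three indices $\le s$). For $s\ge 4$ your plan (rewrite each diagonal minor using a $\mathcal G$-relation with a fourth index $m$, exploit that the coefficient differences telescope, push the residual terms into $\mathcal F$ and, for $\mathcal H_1$, also into $\mathcal E$) matches the structure of the paper's identities exactly. But you did not actually produce the identities, and you yourself flagged this as ``the main obstacle.'' The paper's proof consists of writing two explicit (and not completely short) polynomial identities and checking them by hand; without them the sketch is incomplete. You should carry out the telescoping; the $\mathcal H_2$ case is cleaner, and for $\mathcal H_1$ the extra factor $y_{i\ell}$ forces terms of the form $y_{jm}Y_{ij|\ell m}$ and $y_{jm}Y_{im|j\ell}$ from $\mathcal E$ to appear, so be prepared for those.

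In summary: (i) is conceptually right but the linear-independence-of-minors claim is false and should be replaced by the disjoint-monomial argument; (ii) misses the one-line monomial argument and, as written, attempts a reduction in the wrong direction; (iii) is the right plan but the load-bearing identities are left unwritten.
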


\begin{proof}
  Let $\mathcal M_{\mathcal E} \subset \Cy$ denote the set of monomials 
  occurring in one of the polynomials of $\mathcal E$, and analogously for 
  $\mathcal F$, $\mathcal G$, $\mathcal H_1$ and $\mathcal H_2$.
  \begin{enumerate}[(i)]
  \item This follows from the observation that $\mathcal{M}_{\mathcal E}$, $\mathcal{M}_{\mathcal 
  F}$ and $\mathcal{M}_{\mathcal G}$ are disjoint sets.
  \item For $s = 3$, note that $\mathcal G = \emptyset$ and none of the 
  monomials in $\mathcal M_{\mathcal E} \cup \mathcal M_{\mathcal F}$ is of 
  the form $y_{ij}y_{\ell m}$ with $i,j,\ell,m \leq 3$. On the other hand, the  
  monomials in $\mathcal M_{\mathcal H_1} \cup \mathcal M_{\mathcal H_2}$ 
  are of the form $y_{i_1 j_1} y_{i_2 j_2} y_{i_3 j_3}$ with $i_1, i_2, 
  i_3, j_1, j_2, j_3 \leq 3$. Hence, no monomial in $\mathcal M_{\mathcal H_1} 
  \cup \mathcal M_{\mathcal H_2}$ is a multiple of any of the monomials in 
  $\mathcal M_{\mathcal E} \cup \mathcal M_{\mathcal F}$, so $\langle \mathcal 
  H_1 \cup 
  \mathcal H_2 \rangle \cap (\mathcal E, \mathcal F, \mathcal G) = \emptyset$.
  \item If $s = 2$ we have $\mathcal H_1 \cup \mathcal H_2=\emptyset$, so the 
  claim 
  is trivial. Let $s \geq 4$. Then for all $i,j,\ell,m \leq s$ distinct, we have
  \begingroup \small
  \begin{align*}
  &\hspace{0.75cm}y_{i\ell}(Y_{ij|ij}-Y_{i\ell|i\ell})-(y_{\ell 
  \ell}-y_{jj})Y_{ij|j\ell} \\
  &\hspace{0.75cm}= -2y_{j m}Y_{i j| \ell m}-y_{j m}Y_{i m| j \ell} -y_{i 
  \ell}(Y_{i 
    \ell| 
    i 
    \ell}-Y_{\ell j| \ell j}+Y_{j m| j 
    m}-Y_{m i| m i})+y_{i m}(Y_{j \ell| j m}-Y_{i \ell| i m})\\
  &\hspace{0.75cm}\phantom{{}=} +y_{i j}(Y_{i j| i 
    \ell}-Y_{m j| 
    m \ell})+(y_{i i}-y_{j j})(Y_{i j| \ell j}-Y_{i m| \ell m})-y_{j 
    \ell}(Y_{i 
    \ell| j \ell}-Y_{i m| j m}) \in (\mathcal E, \mathcal F, \mathcal G), 
    \\[0.3em]
  &\hspace{0.75cm}(y_{ii}-y_{jj})Y_{ij|ij}+(y_{jj}-y_{\ell \ell})Y_{j \ell|j 
  \ell}+(y_{\ell 
    \ell}-y_{ii})Y_{\ell i|\ell i} \\
  &\hspace{0.75cm}= (y_{i i}-y_{j j})(Y_{i j| i 
    j}-Y_{j \ell| j \ell}+Y_{\ell m| \ell m}-Y_{m i| m i})+(y_{\ell 
    \ell}-y_{i i})(Y_{i \ell| i \ell}-Y_{\ell j| \ell j}+Y_{j m| j m}-Y_{m i| 
    m i})\\
  &\hspace{0.75cm}\phantom{{}=}+y_{\ell m}(Y_{i \ell| i m}-Y_{j \ell| j 
  m})-y_{j m}(Y_{i j| 
    i m}-Y_{\ell j| \ell m})+y_{i m}(Y_{j i| j m}-Y_{\ell i| \ell m}) \in 
  (\mathcal E, \mathcal F, \mathcal G),
  \end{align*}
  \endgroup
  so $\mathcal H_1$ and $\mathcal H_2$ lie in the ideal generated by $\mathcal 
  E$, $\mathcal F$ and $\mathcal G$.
  \qedhere
  \end{enumerate}
\end{proof}

Next, we identify maximal linearly independent subsets of $\mathcal E$, 
$\mathcal F$, $\mathcal G$.

\begin{lemma} \label{lem:basesEFG}
  The following sets are bases for the vector spaces $\langle \mathcal E 
  \rangle$, $\langle \mathcal F \rangle$ and $\langle \mathcal G \rangle$:
  \begingroup \small
  \begin{align*}
    \mathcal B_{\mathcal E} &:= \{Y_{ij|\ell m} \mid i < j, \ell < m, i 
    \leq 
    \ell \leq j \text{ s.t.\ } \{i,j\} \cap \{\ell,m\} \subset \{s+1, \ldots, 
    k+1\} \text{ and } j \leq m \text{ if } i=\ell\}, \\
    \mathcal B_{\mathcal F} &:= \{Y_{i\ell|i m} - Y_{1\ell|1m} \mid 2 
    \leq i 
    \leq s, \: 2 \leq \ell \leq m \text{ s.t. } i \notin 
    \{\ell, m\}, \: \{\ell\} \cap \{m\} \subset \{s+1, \ldots,k+1\}\} \\
         &\phantom{{}:={}}\cup \{Y_{i1|im} - Y_{21|2m} \mid 3 \leq i \leq s,\:  
         3 \leq m \leq k+1,\: i \neq m\}
        \; \cup\; \{Y_{i1|i2} - Y_{31|32} \mid i \in 
        \{4,\ldots,s\}\}, \\
    \mathcal B_{\mathcal G} &:= \{Y_{12|12} - Y_{2\ell|2\ell} + Y_{\ell m|\ell 
    m} - Y_{m1|m1} \:\mid\: 3 \leq m \leq s-1,\: \ell \in \{3,4,\ldots,m-1\} 
    \cup 
    \{s\}\} \\
    &\phantom{{}:={}}\cup \{Y_{1s|1s} - Y_{s2|s2} + Y_{2m|2m} - Y_{m1|m1} 
    \:\mid\: 3 \leq m \leq s-1\}.
  \end{align*}
  \endgroup
\end{lemma}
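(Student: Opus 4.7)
My plan is to verify spanning and linear independence separately for $\mathcal B_{\mathcal E}$, $\mathcal B_{\mathcal F}$, $\mathcal B_{\mathcal G}$, exploiting throughout the antisymmetries $Y_{ij|\ell m} = -Y_{ji|\ell m} = -Y_{ij|m\ell} = Y_{\ell m|ij}$ (the last coming from $Y$ being symmetric) together with the Pl\"ucker relation $Y_{ij|\ell m} = Y_{i\ell|jm} - Y_{im|j\ell}$ valid whenever $i,j,\ell,m$ are four distinct indices.

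For $\mathcal B_{\mathcal E}$, spanning follows by first using the antisymmetries to normalize any $Y_{ij|\ell m} \in \mathcal E$ to the form $i<j$, $\ell<m$, $i\leq\ell$, and $j\leq m$ when $i=\ell$, and then by invoking the Pl\"ucker relation to enforce $\ell \leq j$: if $\ell > j$, then $i,j,\ell,m$ are four distinct indices, and $Y_{ij|\ell m}$ rewrites as a signed combination of two minors already in $\mathcal B_{\mathcal E}$. For linear independence I would use the lex monomial order on $\Cy$ and check by case analysis that the monomial $y_{i\ell}y_{jm}$ appearing in $Y_{ij|\ell m}$ does not appear as a term in any other element of the normalized family, giving a triangular coefficient matrix.

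For $\mathcal B_{\mathcal F}$, fix a pair $(\ell,m)$ satisfying $\{\ell\} \cap \{m\} \subset \{s+1,\ldots,k+1\}$ and set $v_i^{(\ell,m)} := Y_{i\ell|im}$; note that $v_i^{(\ell,m)} = 0$ whenever $i \in \{\ell,m\}$. The portion of $\langle \mathcal F \rangle$ coming from this pair is spanned by the star $\{v_i^{(\ell,m)} - v_{i_0}^{(\ell,m)} \mid i \neq i_0\}$ around any fixed $i_0 \in \{1,\ldots,s\} \setminus \{\ell,m\}$, and the three subfamilies of $\mathcal B_{\mathcal F}$ realize exactly the choice where $i_0$ is taken to be the smallest valid index in $\{1,2,3\}$. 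Spanning is then immediate, and linear independence follows because the monomial $y_{ii}y_{\ell m}$ tags the basis element $v_i^{(\ell,m)} - v_{i_0}^{(\ell,m)}$ uniquely.

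For $\mathcal B_{\mathcal G}$, write $u_{ij} := Y_{ij|ij}$ and work in $U := \operatorname{span}(u_{ij} \mid 1 \leq i < j \leq s)$, so that each element of $\mathcal G$ is an alternating $4$-cycle sum. For each vertex $v \in \{1,\ldots,s\}$ the linear functional $d_v : U \to \C$ sending $u_{ij}$ to $1$ if $v \in \{i,j\}$ and to $0$ otherwise vanishes on every such $4$-cycle sum, and since $u_{ij} \mapsto e_i+e_j$ defines a surjection $U \to \C^s$, we obtain
\[\dim \langle \mathcal G \rangle \;\leq\; \dim \bigcap_{v=1}^{s} \ker d_v \;=\; \binom{s}{2} - s.\]
A direct count gives $|\mathcal B_{\mathcal G}| = \binom{s-2}{2}+(s-3) = \binom{s}{2}-s$, matching the upper bound, so the claim reduces to proving linear independence of $\mathcal B_{\mathcal G}$. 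This is the main obstacle: unlike the previous cases, different basis cycles share squared monomials $y_{ij}^2$, so a naive leading-term argument does not apply. I plan to resolve this by identifying each basis element with its underlying $4$-edge subgraph of the complete graph on $\{1,\ldots,s\}$, ordering the cycles by $(m,\ell)$, and verifying inductively that each listed cycle contains a distinguished edge ($\{\ell,m\}$ for the first subfamily or $\{2,m\}$ for the second) not occurring in any cycle listed earlier, yielding the required triangular system.
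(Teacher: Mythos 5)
Your overall strategy (spanning via Pl\"ucker reductions for $\mathcal E$, a star decomposition for $\mathcal F$, and the zero-diagonal/column-sum dimension bound for $\mathcal G$) matches the shape of the paper's argument, but there are two concrete problems.

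The linear-independence step for $\mathcal B_{\mathcal E}$ as you describe it fails. You propose to tag $Y_{ij|\ell m}$, normalized as $i<j$, $\ell<m$, $i\leq\ell\leq j$, by the \emph{diagonal} monomial $y_{i\ell}y_{jm}$, and to argue this monomial occurs in no other basis element. But take $Y_{13|24}$ and $Y_{14|23}$ (valid for any $s$, since both have $\{i,j\}\cap\{\ell,m\}=\emptyset$, and both satisfy the normalization). For $Y_{13|24}$ the diagonal monomial is $y_{12}y_{34}$, and for $Y_{14|23}$ it is $y_{12}y_{43}=y_{12}y_{34}$ as well. So the diagonal monomial is not a valid tag and the ``triangular coefficient matrix'' you invoke does not exist with this choice. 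The paper instead tags by the \emph{anti-diagonal} monomial $y_{im}y_{\ell j}$; in the above example that gives $y_{14}y_{23}$ and $y_{13}y_{24}$ respectively, which are distinct. Your argument needs to be repaired with that choice (and then re-verified).

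For $\mathcal B_{\mathcal G}$ you correctly obtain the cardinality count $|\mathcal B_{\mathcal G}|=\binom{s}{2}-s$ and the matching upper bound $\dim\langle\mathcal G\rangle\leq\binom{s}{2}-s$, which reduces the claim to linear independence, but your remark that ``a naive leading-term argument does not apply'' because of shared monomials $y_{ij}^2$ is off the mark, and the replacement you sketch (an inductive ``distinguished edge'' analysis) is not carried out. A monomial-tagging argument \emph{does} work if you look at the off-diagonal coefficients of the quadratic form rather than the $y_{ij}^2$ terms: since $Y_{\ell m|\ell m}=y_{\ell\ell}y_{mm}-y_{\ell m}^2$, the first subfamily is tagged uniquely by the monomial $y_{\ell\ell}y_{mm}$ (with $\ell,m\geq 3$, so no collision with the other diagonal products $y_{11}y_{22},\,y_{22}y_{\ell\ell},\,y_{mm}y_{11}$ appearing in these cycles), and the second subfamily is then tagged by $y_{22}y_{mm}$. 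This is exactly what the paper does, and it makes the independence argument for $\mathcal G$ as direct as for $\mathcal E$ and $\mathcal F$. The $\mathcal B_{\mathcal F}$ part of your proposal is essentially correct and is the same as the paper's: the star interpretation is a nice way of organizing the spanning step, and the tagging monomial $y_{ii}y_{\ell m}$ is the one the paper uses.
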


\begin{proof}
  The polynomials in $\mathcal E$ not contained in $\mathcal 
  B_{\mathcal E} \cup (- \mathcal B_{\mathcal E})$ are the polynomials 
  $Y_{ij|\ell m}$ 
  for 
  $i < j < 
  \ell < m$. However, these can be expressed as $Y_{ij|\ell m} = Y_{i \ell|j 
  m}-Y_{im|j \ell} \in \langle \mathcal B_{\mathcal E}\rangle$.
  Hence $\mathcal B_{\mathcal E}$ spans $\langle \mathcal E \rangle$. For $i < 
  j$, $\ell < m$ with $i \leq \ell \leq j$ such that $\{i,j\} \cap 
  \{\ell,m\} \subset \{s+1, \ldots, k+1\}$, we note that $Y_{ij|\ell m} \in 
  \Cy$ is the unique polynomial in $\mathcal B_{\mathcal 
  E}$ containing the monomial $y_{im} y_{\ell j}$. In particular, the 
  polynomials in $\mathcal B_{\mathcal E}$ are linearly independent, so 
  $\mathcal B_{\mathcal E}$ forms a basis of $\langle \mathcal E \rangle$.
  
  If $i,j,\ell,m \in \{1,\ldots,k+1\}$ with $\ell < m$ are such that 
  $Y_{i\ell|i m} - Y_{j\ell|jm} \in \mathcal F \setminus (\mathcal B_{\mathcal 
  F} \cup -\mathcal B_{\mathcal F})$, then
    \[Y_{i\ell|i m} - Y_{j\ell|jm} = \begin{cases}
    (Y_{i\ell|i m} - Y_{1\ell|1m})-(Y_{j\ell|j m} - Y_{1\ell|1m}) &\text{if }
    \ell, m \neq 1, \\
    (Y_{i1|im} - Y_{21|2m})-(Y_{j1|jm} - Y_{21|2m}) &\text{if }
    \ell = 1, m \neq 2, \\
    (Y_{i1|i2} - Y_{31|32})-(Y_{j1|j2} - Y_{31|32}) &\text{if }
    \ell = 1, m = 2,
    \end{cases}\]
  so $\mathcal B_{\mathcal F}$ spans $\langle \mathcal F \rangle$. Each of the  
  polynomials $Y_{i\ell|i m} - Y_{j\ell|jm}$ in $\mathcal B_{\mathcal F}$ 
  contains a monomial not occurring in any of the other polynomials of 
  $\mathcal B_{\mathcal F}$, namely $y_{ii} y_{\ell m}$.
  Therefore, the polynomials in $\mathcal B_{\mathcal F}$ are linearly 
  independent.
  
  For $3 \leq m \leq s-1$ and $\ell \in \{3,\ldots,m-1\} \cup \{s\}$, 
  the polynomial 
  $Y_{12|12} - Y_{2\ell|2 \ell} + Y_{\ell m|\ell m} - Y_{m1|m1}$
  is the unique polynomial in $\mathcal B_{\mathcal G}$ containing 
  the monomial $y_{\ell \ell} y_{m m}$. In particular, if a linear combination 
  of polynomials in $\mathcal B_{\mathcal G}$ is zero, none of the above 
  polynomials can occur in this linear combination. The remaining polynomials 
  in $\mathcal B_{\mathcal G}$ are of the form $Y_{1s|1s} - Y_{s2|s2} + 
  Y_{2m|2m} - Y_{m1|m1}$ for $3 \leq m \leq s-1$. Among these, the polynomial 
  containing the monomial $y_{22} y_{mm}$ is unique. We conclude that the 
  polynomials in $\mathcal B_{\mathcal G}$ are linearly independent.
  
  We observe that 
    \[\mathcal G \subset \big\{\sum_{i,j=1}^s a_{ij} Y_{ij|ij} \mid A=(a_{ij}) 
    \in 
    \C^{s \times s} \text{ symmetric with } a_{ii} = 0 \text{ and } 
    (1,\ldots,1) A 
    =0\big\}.\]
  The vector space of symmetric $s \times s$-matrices with zero diagonal and 
  whose columns all sum to zero is of dimension $\binom{s}{2}-s$, so $\dim_\C 
  \langle \mathcal G\rangle \leq \binom{s}{2}-s$. On the other hand, we can 
  count that $|\mathcal B_{\mathcal G}| = \binom{s-3}{2}+2(s-3) = 
  \binom{s}{2}-s$, so 
  $\mathcal B_{\mathcal G}$ is a basis of $\langle \mathcal G \rangle$.
\end{proof}

\begin{proof}[Proof of \cref{prop:matrixCompletion}]
  By \cref{lem:setTheoreticEquality}, $V(J) = V(\mathcal X)$ holds 
  set-theoretically. 
  For $s = 3$, we observe that $\mathcal H_1 \cup \mathcal H_2$ consists up to 
  sign of seven linearly independent cubics, so by 
  \cref{lem:linearIndependenceAmongSets}, the ideal $(\mathcal X)$ is in this 
  case minimally generated by those seven cubics and the polynomials in 
  $\mathcal B_{\mathcal E}$, $\mathcal B_{\mathcal F}$ and $\mathcal 
  B_{\mathcal G}$ from \cref{lem:basesEFG}.
  
  For $s \neq 3$, \cref{lem:linearIndependenceAmongSets} and 
  \cref{lem:basesEFG} show that $(\mathcal X)$ is minimally generated just by 
  the polynomials $\mathcal B_{\mathcal E} \cup \mathcal B_{\mathcal F} \cup 
  \mathcal B_{\mathcal G}$. Straightforward counting gives:
  \begin{align*}
    |\mathcal B_{\mathcal E}| &= {\textstyle 2 \binom{k+1}{4} + (k-s+1) 
    \binom{k}{2} + \binom{k-s+1}{2}} \\
    &=(k^4-6sk^2+4k^3+6s^2-6sk+5k^2-6s+2k)/12,
    \\
    |\mathcal B_{\mathcal F}| &= {\textstyle (s-1) 
    \left(\binom{k-1}{2}+(k-s+1)\right) + 
    (s-2)(k-2) + \binom{s-3}{1}} \\
    &= 
    \begin{cases}
      (sk^2-k^2+sk-2s^2-3k+4s-2)/2 &\text{if } s \geq 3, \\
      (sk^2-k^2+sk-2s^2-3k+4s)/2 &\text{if } s = 2,
    \end{cases} \\
    |\mathcal B_{\mathcal G}| &=
    \begin{cases}
      {\textstyle \binom{s}{2}-s = (s^2-3s)/2} &\text{if }s \geq 3, \\
      0 &\text{if } s=2.
    \end{cases}
  \end{align*}
  Adding up these cardinalities gives the claimed number of quadratic forms.
\end{proof}

\begin{remark} \label{rem:schemeTheoretic}
  In fact, for $s \geq 3$, our proof 
  shows that $V(\mathcal X)$ is the same 
  scheme as $V(J)$ away from the point $I_s \in \P \Sym^2 \C^{k+1}$. In the 
  proof of \cref{thm:uniqueQuadricCase}, we considered $V(J) \cap H$, where $H$ 
  is a hyperplane not containing $I_s$. 
  Since $V(J) \cap H = V(\mathcal X) \cap H$ scheme-theoretically, we conclude 
  that our  
  equations for $L^{\circ 2}$ in \cref{thm:uniqueQuadricCase} describe 
  not only the correct set, but even the correct scheme. In fact, we believe 
  that we have ideal-theoretic equality for the specified set of polynomials, 
  but our proof stops short of verifying this.
\end{remark}

We now prove the
result about eigenspaces of symmetric matrices stated as \cref{cor:eigenvalues}. It follows directly from the proof of \cref{prop:matrixCompletion}.

\begin{proof}[Proof of \cref{cor:eigenvalues}]
  A complex symmetric matrix $A \in \C^{s \times s}$ has an eigenspace 
  of codimension~1 with respect to an eigenvalue $\lambda \in \C$ if and only 
  if the matrix $A-\lambda \id$ is of rank~1, which means that $A \in 
  V(J)$ for the case $s = k+1$. By \cref{lem:setTheoreticEquality} and \cref{lem:linearIndependenceAmongSets}, this is equivalent to the vanishing of the equations ${\mathcal E\cup \mathcal F \cup \mathcal G}$, which are the above relations among $2\times 2$-minors for $s = k+1 \geq 4$. The second claim was proved in \cref{prop:eigenvaluesEarly}.
\end{proof}

The proof of \cref{thm:uniqueQuadricCase} was based on relating the 
coordinate-wise square $L^{\circ 2}$ in the case $\dim_\C I(Z)_2 = 1$ to the 
question when a symmetric matrix can by completed to a rank~1 matrix by adding 
a multiple of $I_s$. In the same spirit, for \emph{arbitrary} linear spaces $L $ 
(no restrictions on the set of quadrics containing $Z $), 
determining the ideal of the coordinate-wise square $L^{\circ 2}$ 
boils down to the following problem in symmetric rank~1 matrix completion:

\begin{problem} \label{prob:MatrixProblem}
  For a fixed matrix $B \in \C^{(n+1) \times (k+1)}$ of rank $k+1$, find the 
  defining equations of the set
  \[\left\{M \in \C^{(k+1)\times(k+1)} \text{ symmetric } \mid 
  \begingroup \footnotesize \begin{array}{c}
  \exists P \in \C^{(k+1)\times(k+1)} \text{ symmetric such that } 
    BPB^T \\ \text{ has a zero diagonal and } \rank(M+P) = 1
  \end{array} \endgroup \right\}.\]
\end{problem}

Indeed, let $L $ be an arbitrary linear space of dimension~$k$ and 
let $B \in \C^{(n+1) \times (k+1)}$ be a chosen matrix of full rank describing 
$L$ as the image of the linear embedding $\P^k \hookrightarrow \P^n$ given by 
$B$. Then the rows of $B$ form the finite set of points $Z \subset (\P^k)^*$. 
Identifying quadratic forms on $\P^k$ with symmetric $(k+1)\times 
(k+1)$-matrices, the subspace $I(Z)_2 \subset \Sym^2 (\C^{k+1})^*$ 
corresponds to
  \[I(Z)_2 = \{P \in \C^{(k+1)\times(k+1)} \text{ symmetric such that } 
  BPB^T \\ \text{ has a zero diagonal}\}.\]
By \cref{lem:intrinsicDescription}, the coordinate-wise square $L^{\circ 2} 
$ is a linear re-embedding of projecting the second Veronese variety
  \[\nu_2(\P^k) = \{\text{rank~$1$ symmetric $(k+1)\times (k+1)$-matrices up to 
  scaling}\}\]
from $\P(I(Z)_2)$, so describing the ideal of $L^{\circ 2}$ 
corresponds to solving \cref{prob:MatrixProblem} for the given matrix $B$.
Similarly, describing the coordinate-wise $r$-th power of a linear space 
corresponds to the
analogous
problem in symmetric rank~$1$ tensor completion.

\medskip

By \cref{lem:intrinsicDescription}, determining the coordinate-wise 
$r$-th power of a linear space corresponds to describing the projection of the 
$r$-th Veronese variety from a linear space of the form $\P(I(Z)_r)$ for a 
non-degenerate finite set of points $Z$. We may 
ask how general this problem is, and pose the question which linear subspaces 
of $\P \Sym^r W$ are of the form $\P(I(Z)_r)$:


\begin{question} \label{qu:whichProjections}
  Which linear subspaces of $\C[z_0,\ldots,z_k]_r$ can be realised as the set of
  degree~$r$ polynomials vanishing on some non-degenerate finite set of points 
  in $\P^k$ of cardinality $\leq n+1$?
\end{question}

We envision that 
an answer to this question may lead to insights into describing 
which varieties can occur as the coordinate-wise $r$-th power of some linear 
space in $\P^n$.


\begin{small}
\vspace{-0.3cm}

\end{small}

\end{document}